\tikzset
  {midarrow/.style={decoration={markings,mark=at position 0.5 with
     {\arrow[xshift=2pt]{Latex[length=4pt,width=5pt,#1]}}},postaction={decorate}}
  }
\theoremstyle{theorem}
\newtheorem{theorem}{Theorem}[section]
\newtheorem*{theorem*}{Theorem}
\newtheorem{corollary}[theorem]{Corollary}
\newtheorem{proposition}[theorem]{Proposition}
\newtheorem{definition}[theorem]{Definition}
\theoremstyle{definition}
\newtheorem{remark}[theorem]{Remark}
\newenvironment{example}
  {\pushQED{\qed}\examplex}
  {\popQED\endexamplex}
\newcommand{\verteq}{\rotatebox{270}{$\,\cong$}}
\DeclareRobustCommand{\cev}[1]{%
  \mathpalette\do@cev{#1}%
}
\newcommand{\do@cev}[2]{%
  \fix@cev{#1}{+}%
  \reflectbox{$\m@th#1\vec{\reflectbox{$\fix@cev{#1}{-}\m@th#1#2\fix@cev{#1}{+}$}}$}%
  \fix@cev{#1}{-}%
}
\newcommand{\fix@cev}[2]{%
  \ifx#1\displaystyle
    \mkern#23mu
  \else
    \ifx#1\textstyle
      \mkern#23mu
    \else
      \ifx#1\scriptstyle
        \mkern#22mu
      \else
        \mkern#22mu
      \fi
    \fi
  \fi
}
\title{Persistent sheaf cohomology}
\author{Florian Russold \\ Institute of Geometry, Graz University of Technology, Austria}
\begin{document}

\maketitle

\begin{abstract}
\noindent
We expand the toolbox of (co)homological methods in computational topology by applying the concept of persistence to sheaf cohomology. Since sheaves (of modules) combine topological information with algebraic information, they allow for variation along an algebraic dimension and along a topological dimension. Consequently, we introduce two different constructions of sheaf cohomology (co)persistence modules. One of them can be viewed as a natural generalization of the construction of simplicial or singular cohomology copersistence modules. We discuss how both constructions relate to each other and show that, in some cases, we can reduce one of them to the other. Moreover, we show that we can combine both constructions to obtain two-dimensional (co)persistence modules with a topological and an algebraic dimension. We also show that some classical results and methods from persistence theory can be generalized to sheaves. Our results open up a new perspective on persistent cohomology of filtrations of simplicial complexes.   
\end{abstract}

\section{Introduction} \label{986}

In recent years, the field of computational topology, especially the field of topological data
analysis, gained a lot of popularity. One of its most prominent tools is persistent (co)homology \cite{perscoho,edelsbrunner,carlsson}. The motivation for persistent homology is to investigate the homology of data sets or, more precisely, the homology of the support of the underlying data distribution. To extract homological information from a point set $P\subseteq \mathbb{R}^d$, one constructs a parameter-dependent simplicial complex model $K_\epsilon$ of the subspace from which the data set is sampled using for example the \v{C}ech or the Vietoris-Rips construction. The problem is that the homology of these models depends on the scale parameter $\epsilon$ and that there is not necessarily a ``right choice" for this parameter. This problem can be tackled by constructing a nested sequence of simplicial complex models (upper row of (\ref{672})) 
\begin{equation} \label{672}
\begin{tikzcd}
K_{\epsilon_0} \arrow[r,hook] & K_{\epsilon_1} \arrow[r,hook] & \cdots \arrow[r,hook] & K_{\epsilon_{n-2}} \arrow[r,hook] & K_{\epsilon_{n-1}} \\[-15pt]
H_k(K_{\epsilon_0}) \arrow[r] & H_k(K_{\epsilon_1}) \arrow[r] & \cdots \arrow[r] & H_k(K_{\epsilon_{n-2}}) \arrow[r] & H_k(K_{\epsilon_{n-1}})
\end{tikzcd}
\end{equation}
with respect to increasing scale parameters $\epsilon_0<\ldots<\epsilon_{n-1}$, computing the $k$-dimensional simplicial homology $H_k(K_{\epsilon_i})$ (lower row of (\ref{672})) and investigating the stability or persistence of homological features along a range of parameter values. The information about the stability of homological features in such a filtered simplicical complex is provided by its persistent homology and is commonly represented as a so-called persistence barcode \cite{cohensteiner,oudot}.

Even more recently, applied sheaf theory started to gain momentum in the computational topology community. From an applied mathematics perspective, sheaves can be viewed as models of (local) information attached to a space. The idea is to model objects of interest as sheaves that can be stored and manipulated on a computer and investigate these models by the means of sheaf theory. Foundational work in this field was done by Shepard and Curry in their respective theses \cite{curry,shepard}, where they developed the theory of cellular sheaves and cosheaves. Cellular sheaves are a kind of ``discrete" sheaves on cell complexes that allow for practical computations in finite cases. Cellular sheaves, or more generally, sheaves on finite posets, are used, for example, to describe information flows in networks \cite{ghrist,hansen,robinson4}, for sensor integration and data fusion \cite{robinson2,robinson3} or for stratification learning \cite{brown}. Much more about applications of sheaves can be found in \cite{curry,ghristbook,robinsonbook}. One of the main tools of sheaf theory and, in particular, applied sheaf theory is sheaf cohomology. Sheaf cohomology can be used to investigate local to global inference problems. For example, sheaf cohomology (or cosheaf homology) is used to compute global persistent (co)homology from local persistent (co)homology \cite{curry,morse,yoonghrist,casas}.

\subsection{Our contribution}

The goal of this work is to extend the theory of persistence to sheaf cohomology. Since sheaves (of modules) combine topological information with algebraic information, they allow for variation along an algebraic dimension and along a topological dimension. Therefore, sheaf theory allows for two different constructions of sheaf cohomology (co)persistence modules and, consequently, for two different versions of persistent sheaf cohomology. 

In the applications discussed above, a common theme is to model an object of interest as a simplicial complex or a sheaf and investigate this model by the means of (co)homology. The principles of the construction of persistent homology, namely functoriality (of homology) and unique decomposability into interval modules, can also be applied in the context of sheaf theory. Instead of considering parameter-dependent simplicial complex models of topological spaces or data sets, we can consider parameter-dependent sheaf models of some object of interest. Given a linear diagram of sheaves and sheaf morphisms on some topological space $X$ (upper row of (\ref{856})),
\begin{equation} \label{856}
\begin{tikzcd}
\mathcal{F}_0 \arrow[r] & \mathcal{F}_1 \arrow[r] & \cdots \arrow[r] & \mathcal{F}_{n-2} \arrow[r] & \mathcal{F}_{n-1} \\[-15pt]
H^k(X,\mathcal{F}_0) \arrow[r] & H^k(X,\mathcal{F}_1) \arrow[r] & \cdots \arrow[r] & H^k(X,\mathcal{F}_{n-2}) \arrow[r] & H^k(X,\mathcal{F}_{n-1})
\end{tikzcd}
\end{equation}
we can track the $k$-dimensional sheaf cohomology $H^k(X,\mathcal{F}_i)$ along this diagram (lower row of (\ref{856})) in the same way as we track simplicial homology along a linear diagram of simplicial complexes (\ref{672}). If we consider vector space valued sheaves, we obtain an interval decomposable persistence module and, moreover, a persistence barcode where the bars correspond to persistent sheaf cohomology classes along the diagram. In this way, we extend the cohomological investigation of sheaf models to families of parameter-dependent models. In this approach, the topological space $X$ is fixed, whereas the algebraic information attached to $X$ varies along the upper diagram in (\ref{856}). Therefore, we call the obtained persistence module a \emph{sheaf persistence module of algebraic type} or a \emph{persistence module of type $A$}. 

Sheaf theory also offers the following alternative way of generalizing persistent (co)homology to persistent sheaf cohomology. Given a linear diagram of topological spaces and continuous maps, as depicted in the middle row of (\ref{987}), we can pull back a fixed sheaf $\mathcal{F}_{n-1}$ on $X_{n-1}$ along the continuous maps to obtain sheaves on all the spaces in the diagram. These pullbacks along continuous maps induce morphisms in cohomology and lead to the linear diagram of cohomology modules depicted in the lower row of (\ref{987}).
\begin{equation} \label{987}
\begin{tikzcd}
\mathcal{F}^0 & \arrow[l,swap,"f_{0}^*",maps to] \mathcal{F}^1 & \arrow[l,swap,"f_{1}^*",maps to] \cdots & \arrow[l,swap,"f_{n-3}^*",maps to] \mathcal{F}^{n-2} & \arrow[l,swap,"f_{n-2}^*",maps to] \mathcal{F}^{n-1} \\[-20pt]
X_0 \arrow[r,"f_0"] & X_1 \arrow[r,"f_1"] & \cdots \arrow[r,"f_{n-2}"] & X_{n-2} \arrow[r,"f_{n-2}"] & X_{n-1} \\[-15pt]
H^k(X_0,\mathcal{F}^0) & \arrow[l] H^k(X_1,\mathcal{F}^1) & \arrow[l] \cdots & \arrow[l] H^k(X_{n-2},\mathcal{F}^{n-2}) & \arrow[l] H^k(X_{n-1},\mathcal{F}^{n-1}) 
\end{tikzcd}
\end{equation}
If we consider a sheaf of vector spaces, the obtained copersistence module is interval decomposable and we get a barcode describing the persistence of sheaf cohomology classes along the informal diagram of pairs of topological spaces and sheaves. In this approach, the algebraic information is provided by a fixed sheaf, whereas the topological spaces vary. Therefore, we call the obtained copersistence module a \emph{sheaf copersistence module of topological type} or a \emph{copersistence module of type $T$}. One can identify a topological space with the constant ($R$-valued) sheaf on this space. Moreover, (in many cases) the singular (or simplicial) cohomology of a space is the sheaf cohomology of the constant sheaf on this space. From this point of view, the construction of the cohomology copersistence module of a linear diagram of topological spaces and continuous maps can be phrased in the following way (see (\ref{672}) for the homological case). Put the constant sheaf on every space, compute the respective sheaf cohomology vector spaces and connect them by the morphisms induced by continuous maps. Hence, this construction of sheaf copersistence modules of topological type generalizes the construction of ordinary cohomology copersistence modules from diagrams of topological spaces by allowing arbitrary sheaves instead of the constant sheaf.  

To our knowledge, until now, there is no general definition, systematic construction, or investigation of any version of persistent sheaf cohomology. We fill this gap by establishing a rigorous basis of both theories of persistent sheaf cohomology discussed above. We develop both theories in a systematic way and discuss how they relate to each other. 

We show that there is a correspondence between linear diagrams of sheaves and sheaves of graded modules generalizing the well-known correspondence between persistence modules and graded modules over polynomial rings. Moreover, we show that there is also a sheaf version of the Zomorodian-Carlsson representation theorem \cite{corbet,carlsson} that contains the original theorem as a special case. We use this correspondence to prove that the persistent cohomology of a linear diagram of sheaves can be computed by the ordinary cohomology of the corresponding sheaf of graded modules. In a computational context this result enables us to compute persistent sheaf cohomology via matrix reduction, a method familiar from usual persistent homology. 

We show that there is a connection between both constructions of sheaf (co)persistence modules and that, in some cases, we can reduce the second construction to the first one. This result opens up a new perspective on simplicial persistent cohomology of filtrations of simplicial complexes as the (co)homology of a (co)sheaf of graded modules. Finally, we show that one can combine both constructions to obtain two-dimensional sheaf cohomology (co)persistence modules. This combined approach is different from the straightforward generalization of each construction to the multi-dimensional case. In our combined construction, along one direction the algebraic information over a fixed topological space changes, whereas along another direction the topological spaces change and the algebraic information over these spaces is, in some sense, fixed. 

Last but not least, we demonstrate both constructions by the means of two examples where we can provide interpretations of the meaning of the persistent sheaf cohomology classes. 

\subsection{Related work}

Persistent sheaf cohomology in our sense is hardly covered in the current literature. In \cite{yoon,yoonghrist} Yoon and Ghrist use a form of ``persistent cosheaf homology" to compute global persistent homology from local persistent homology subordinate to a cover. Given a point set $P$ and a cover of $P$ with one-dimensional nerve complex $N$, they show that the homology persistence module of a Vietoris-Rips filtration on $P$ is isomorphic to a persistence module
\begin{equation*} \label{322}
\begin{tikzcd}[column sep=large]
H_0(N,\mathcal{G}^0_k)\oplus H_1(N,\mathcal{G}^0_{k-1}) \arrow[r,"\Psi_0"] & \cdots \arrow[r,"\Psi_{n-2}"] & H_0(N,\mathcal{G}^{n-1}_k)\oplus H_1(N,\mathcal{G}^{n-1}_{k-1})
\end{tikzcd}
\end{equation*}
where $\mathcal{G}^i_k$ is a cosheaf gathering the $k$-dimensional ``local" homology at the $i$-th filtration step. Unfortunately, in general, the above persistence module is not a direct sum of persistence modules 
\begin{equation*} \label{977}
\begin{tikzcd}[column sep=huge]
H_j(N,\mathcal{G}^0_k) \arrow[r,"H_j(N\text{,}\phi^0_k)"] & \cdots \arrow[r,"H_j(N\text{,}\phi^{n-2}_k)"] & H_j(N,\mathcal{G}^{n-1}_k) \quad .
\end{tikzcd}
\end{equation*}
Hence, it is not exactly a cosheaf persistence module of algebraic type in our sense. They use spectral sequences to obtain the appropriate morphisms $\Psi_i$. This spectral sequence approach is generalized to higher-dimensional nerve complexes by Casas \cite{casas}. 

The idea of constructing a persistence module from a cellular sheaf on a finite filtered space by restricting the sheaves to the subspaces in the filtration and computing cellular sheaf cohomology was first proposed in a short note by Karthik Yegnesh \cite{yegnesh}. This construction can be viewed as a special case of our construction of sheaf copersistence modules of topological type.

Lots of work has been done on a different approach of connecting sheaf theory with the theory of persistence. One can view persistence modules as sheaves on the underlying poset and hence investigate persistence modules by the means of sheaf theory. This approach is discussed in \cite{berkouk,bubenik,curry,kashiwara,kim}.     

\subsection{Outline}

\textbf{Main part:}
Sections \ref{402}, \ref{260} and \ref{182} provide some preliminaries of sheaves, sheaf cohomology and persistence, respectively. In Section \ref{551} we introduce sheaf persistence modules of algebraic type. In Section \ref{608} we show a correspondence between linear diagrams of sheaves and sheaves of graded modules and a sheaf version of the Z-C representation theorem. In Section \ref{819} we show that we can relate the persistent cohomology of linear diagrams of sheaves and the cohomology of the corresponding sheaf of graded modules. In Section \ref{769} we introduce sheaf copersistence modules of topological type. In Section \ref{959} we discuss how they relate to sheaf persistence modules of algebraic type. In Section \ref{911} we show how to combine both constructions. In Section \ref{334} and \ref{741} we give examples of the first and second construction of sheaf (co)persistence modules, respectively. \newline

\noindent
\textbf{Appendix:}
Throughout this work we will use cellular sheaves of vector spaces on simplicial complexes as a running example. Section \ref{684} provides a brief introduction to cellular sheaves on simplicial complexes. For the reader not familiar with derived functors and \v{C}ech cohomology, Sections \ref{658}, \ref{710}, \ref{967} and \ref{809} provide a short accessible discussion of these constructions. In Section \ref{233} we use \v{C}ech cohomology to derive explicit cochain complexes whose cohomology compute the cohomology of cellular sheaves on simplicial complexes. Moreover, in Section \ref{342} we derive explicit cochain morphisms whose induced morphisms in cohomology compute the morphisms induced by simplicial maps.  \newline

\noindent
\textbf{Notation:}
Throughout this work we assume that $R$ is a unital commutative ring and the category $\mathbf{M}$ is either the category of $R$-modules $\mathbf{Mod}_R$ or the category of $\mathbb{N}_0$-graded $R$-modules $\mathbf{grMod}_R$. For a field $\mathbb{F}$, we denote by $\mathbf{Vec}_\mathbb{F}$ the category of $\mathbb{F}$-vector spaces. By $\mathbf{vec}_\mathbb{F}$, $\mathbf{mod}_R$ or $\mathbf{grmod}_R$ we denote the categories of finitely generated (graded) modules. For a general reference on category theory see \cite{maclane}. Given $(f_i\colon X\rightarrow Y_i)_{i\in I}$, we denote by $\prod_{i\in I}f_i\colon X\rightarrow \prod_{i\in I}Y_i$ the map defined by $x\in X\mapsto \prod_{i\in I}f_i(x)$ and by $p_j\colon\prod_{i\in I}Y_i\rightarrow Y_j$ the projection from the product to the $j$-th factor. We use similar notations for direct sums.

\section{Background} \label{217}

In this section, we discuss basic notions of sheaves, sheaf cohomology and persistence. For more information on sheaves and sheaf cohomology see \cite{bredon,cohomology,iversen}. For a survey on persistence theory see~\cite{oudot}. Readers familiar with these concepts can safely skip this section.

\subsection{Sheaves} \label{402}

A sheaf can be viewed as a structure that gathers algebraic information on the open sets of a topological space in such a way that consistent local information can be lifted to global information. 

\begin{definition}[Sheaf] \label{430}
A \emph{presheaf} $\mathcal{F}$ on a topological space $X$ with values in the category $\mathbf{M}$ is a functor  $\mathcal{F}\colon\mathbf{Open}(X)^{op}\rightarrow \mathbf{M}$ from the opposite category of open subsets of $X$ to $\mathbf{M}$. A presheaf $\mathcal{F}$ is a \emph{sheaf} if for every open subset $U\subseteq X$ and every open cover $\mathcal{U}=(U_i)_{i\in I}$ of $U$ the following sequence is exact
\begin{equation*} \label{951}
\begin{tikzcd}
0 \arrow[r] & \mathcal{F}(U) \arrow[r,"\gamma"] & \underset{i\in I}{\prod} \mathcal{F}(U_i) \arrow[r,"\alpha-\beta"] & \underset{i,j\in I}{\prod} \mathcal{F}(U_i\cap U_j) 
\end{tikzcd}
\end{equation*}
where the morphisms $\alpha$, $\beta$ and $\gamma$ are defined in the following way:
\begin{align*} \label{637}
& \alpha\coloneqq \prod_{i,j\in I} \mathcal{F}(U_i\cap U_j\xhookrightarrow{} U_i)\circ p_i \\
& \beta\coloneqq \prod_{i,j\in I} \mathcal{F}(U_i\cap U_j\xhookrightarrow{} U_j)\circ p_j \\
& \gamma\coloneqq \prod_{i\in I}\mathcal{F}(U_i\xhookrightarrow{} U) \quad .
\end{align*} 
\end{definition}

\noindent
The structure preserving maps between sheaves are called sheaf morphisms.

\begin{definition}[Sheaf morphism] \label{944} 
Let $\mathcal{F}$ and $\mathcal{G}$ be (pre)sheaves with values in $\mathbf{M}$ on a topological space $X$. A morphism of (pre)sheaves $\phi\colon\mathcal{F}\rightarrow \mathcal{G}$ is a natural transformation between the functors $\mathcal{F}$ and $\mathcal{G}$, i.e.\ a family $\phi=(\phi_U)_{U\in \mathbf{Open}(X)}$ of morphisms $\phi_U\colon\mathcal{F}(U)\rightarrow \mathcal{G}(U)$ in the category $\mathbf{M}$ such that for every pair of open sets $V\subseteq U \in \mathbf{Open}(X)$ the following diagram commutes
\begin{equation*} %\label{155}
\begin{tikzcd}
\mathcal{F}(U) \arrow[r,"\phi_U"] \arrow[d,swap,"\mathcal{F}(V\xhookrightarrow{} U)"] & \mathcal{G}(U) \arrow[d,"\mathcal{G}(V\xhookrightarrow{} U)"] \\
\mathcal{F}(V) \arrow[r,"\phi_V"] & \mathcal{G}(V)
\end{tikzcd} \quad .
\end{equation*}
\end{definition}

\noindent
(Pre)sheaves and (pre)sheaf morphisms form categories which we will denote by $\mathbf{pShv}(X,\mathbf{M})$ and $\mathbf{Shv}(X,\mathbf{M})$, respectively. Since the categories $\mathbf{Mod}_R$ and $\mathbf{grMod}_R$ are abelian categories \cite[Page 20]{gradedrings}, the categories $\mathbf{pShv}(X,\mathbf{M})$ and $\mathbf{Shv}(X,\mathbf{M})$ are also abelian categories \cite[Page 25/26]{weibel}. There is also the notion of a cosheaf, dual to the notion of a sheaf \cite[Page 281]{bredon}. The definitions of cosheaves and cosheaf morphisms are dual to the definitions of sheaves and a sheaf morphisms. We denote by $\mathbf{coShv}(X,\mathbf{M})$ the category of cosheaves with values in $\mathbf{M}$ on a topological space $X$.

\begin{example}[Sheaf of continuous functions] \label{700} 
The prototypical example of a sheaf on a topological space $X$ is the \emph{sheaf of continuous functions} $\mathcal{O}_X\colon\mathbf{Open}(X)^{op}\rightarrow \mathbf{Vec}_\mathbb{R}$ which is defined by $\mathcal{O}_X(U)\coloneqq\{f\colon U\rightarrow \mathbb{R}\text{ continuous}\}$ and $\mathcal{O}_X(U\xhookrightarrow{} V)(f)\coloneqq f|_U$ for all $U\subseteq V\subseteq X$ open. In this case, the sheaf property can be interpreted in the following way: Given an open cover $\mathcal{U}=(U_i)_{i\in I}$ of $X$ and a family of continuous functions $(f_i\colon U_i\rightarrow \mathbb{R})_{i\in I}$ such that $f_i|_{U_i\cap U_j}=f_j|_{U_i\cap U_j}$, then we can glue the maps $(f_i)_{i\in I}$ together to obtain a unique continuous map $f\colon X\rightarrow \mathbb{R}$ such that $f|_{U_i}=f_i$. The general sheaf condition is an abstraction of this property.
\end{example}

\noindent
From a computational perspective, general topological spaces and sheaves like the sheaf of continuous functions are a little bit unhandy as they might involve infinitely many open sets and infinite-dimensional vector spaces. The so-called cellular sheaves on finite cell complexes \cite[Definition 4.1.6]{curry} are a kind of sheaves well-suited for computations. We consider the following special case of cellular sheaves of finite-dimensional vector spaces on finite simplicial complexes as a running example. More details on the following example can be found in Section \ref{684}.

\begin{example}[Cellular sheaves on simplicial complexes] \label{558}
Let $X$ be a finite abstract simplicial complex. On the one hand, $X$ can be viewed as a finite topological space equipped with the so-called Alexandrov topology (Section \ref{913}) and a sheaf on $X$ is a sheaf by Definition \ref{430}. On the other hand, $X$ can be viewed as the category $\mathbf{X}$ corresponding to the poset of its face relations. One can show that a sheaf on $X$ corresponds to a functor $F\colon\mathbf{X}\rightarrow \mathbf{M}$ \cite[Theorem 4.2.10]{curry}, i.e.\ a cellular sheaf on a simplicial complex can be identified with an assignment of an object $F(\sigma)$ to each simplex $\sigma\in X$ and an assignment of a morphism $F(\sigma\rightarrow \tau)$ to each pair of incident simplices $\sigma\leq\tau$, subject to commutativity requirements. Moreover, a sheaf morphism $\phi\colon F\rightarrow G$ between cellular sheaves on $X$ is a natural transformation between the functors $F$ and $G$, i.e.\ an assignment of a morphism $\phi_\sigma\colon F(\sigma)\rightarrow G(\sigma)$ to each $\sigma\in X$ satisfying the conditions of a natural transformation. Figure \ref{706} shows an example of cellular sheaves and a sheaf morphism on a $1$-simplex.
\begin{figure}[h] 
\centering
\begin{tikzcd}[every label/.append style = {font = \tiny}]
\bullet \arrow[dd,dash,thick,shorten <= -.5em,shorten >= -.5em] & & \mathbb{F}^2 \arrow[d,swap,"\begin{pmatrix} 1 \quad 0 \\ 1 \quad 0 \\ 0 \quad 1 \end{pmatrix}"] \arrow[rrr,red,"\begin{pmatrix} 1 \quad 0 \\ 0 \quad 1 \\ 0 \quad 0 \end{pmatrix}"] & & & \mathbb{F}^3 \arrow[d,"\begin{pmatrix} 1 \quad 0 \quad 0 \\ 1 \quad 0 \quad 1 \\ 0 \quad 1 \quad 0 \end{pmatrix}"] \\ 
& & \mathbb{F}^3 \arrow[rrr,red,"\begin{pmatrix} 1 \quad 0 \quad 0 \\ 0 \quad 1 \quad 0 \\ 0 \quad 0 \quad 1\end{pmatrix}"] & & & \mathbb{F}^3 \\
\bullet & & \mathbb{F} \arrow[u,"\begin{pmatrix} 0 \\ 0 \\ 1\end{pmatrix}"] \arrow[rrr,red,"\begin{pmatrix} 1 \\ 0  \end{pmatrix}"] & & & \mathbb{F}^2 \arrow[u,swap,"\begin{pmatrix} 0 \quad 0 \\ 0 \quad 1 \\ 1 \quad 0 \end{pmatrix}"] \\[-15 pt]
X & & F \arrow[rrr,"\phi"{font = \small},red] & & & G
\end{tikzcd}
\caption{Cellular sheaves and a sheaf morphism on a $1$-simplex.}
\label{706}
\end{figure}
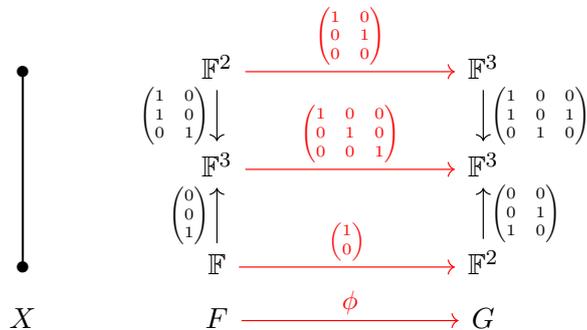
\end{example}

\noindent
Sheaf morphisms $\phi\colon \mathcal{F}\rightarrow \mathcal{G}$ connect different sheaves on a fixed topological space $X$ in the category $\mathbf{Shv}(X,\mathbf{M})$. Given a continuous map $f\colon X\rightarrow Y$ between topological spaces $X$ and $Y$, there are also notions of ``maps" between the space of sheaves on $X$ and the space of sheaves on $Y$ describing how one can translate a sheaf on $X$ into a sheaf on $Y$ or vice versa. 

\begin{definition}[Direct image functor] \label{434}
Let $X$ and $Y$ be topological spaces and $f\colon X\rightarrow Y$ a continuous map. Then there is a functor $f_*\colon \mathbf{Shv}(X,\mathbf{M})\rightarrow \mathbf{Shv}(Y,\mathbf{M})$ called the \emph{direct image functor} with respect to $f$. For $\phi\colon\mathcal{F}\rightarrow \mathcal{G}$ a morphism in $\mathbf{Shv}(X,\mathbf{M})$, the sheaf $f_*\mathcal{F}$ is defined by
\begin{equation*} %\label{553}
f_*\mathcal{F}(U)\coloneqq \mathcal{F}(f^{-1}(U)) \quad \forall U\in\mathbf{Open}(Y) 
\end{equation*}
and the sheaf morphism $f_*\phi\colon f_* \mathcal{F}\rightarrow f_*\mathcal{G}$ is defined by 
\begin{equation*} %\label{359}
(f_*\phi)_U\coloneqq \phi_{f^{-1}(U)} \quad \forall U\in\mathbf{Open}(Y)  \quad .
\end{equation*}
\end{definition}

\noindent 
To define the inverse image functors we need a process called sheafification that turns a presheaf into a sheaf \cite[page 85/96]{iversen}. 

\begin{definition}[Inverse image functor] \label{894}
Let $X$ and $Y$ be topological spaces and $f\colon X\rightarrow Y$ a continuous map. Then there is a functor $f^{*}\colon \mathbf{Shv}(Y,\mathbf{M})\rightarrow \mathbf{Shv}(X,\mathbf{M})$ called the \emph{inverse image functor} with respect to $f$. For $\phi\colon\mathcal{F}\rightarrow \mathcal{G}$ a morphism in $\mathbf{Shv}(X,\mathbf{M})$, the sheaf $f^*\mathcal{F}$ is the sheafification of the presheaf defined by 
\begin{equation*} %\label{589}
U\mapsto \underset{{f(U)\subseteq V}}{\text{colim}} \mathcal{F}(V) \quad \forall U\in \mathbf{Open}(X) 
\end{equation*}
and $f^*\phi\colon f^*\mathcal{F}\rightarrow f^*\mathcal{G}$ is the morphism induced by $\phi$ on colimits.
\end{definition}

\begin{example} \label{575}
Let $\iota\colon X\xhookrightarrow{} Y$ be an inclusion of simplicial complexes. Then $\iota$ is continuous if $X$ and $Y$ are viewed as topological spaces with the Alexandrov topology (Section \ref{684}) and $\iota_*$ and $\iota^*$ act in the following way on cellular sheaves $F$ on $X$ and $G$ on $Y$ \cite[Definition 5.1.3, 5.1.8]{curry}
\begin{equation*} %\label{993}
\begin{aligned}
& \iota_*F(\tau)=\begin{cases} F(\tau) &\text{ if } \tau\in X \\ 0 &\text{ else} \end{cases} \quad \forall\tau\in Y  \\[10pt]
& \iota^*G(\sigma)=G(\iota(\sigma))=G(\sigma) \qquad \forall\sigma\in X
\end{aligned} \hspace{.5cm} .
\end{equation*} 
Figure \ref{398} shows the action of the direct and inverse image functors with respect to the inclusion of an edge into a triangle.
\begin{figure}[h]
\centering 
\begin{tikzcd}[every label/.append style = {font = \tiny}]
\bullet & & \bullet \arrow[ddrr,dash,thick,shorten <= -.7em,shorten >= -.7em] & & & & \mathbb{F} \arrow[d,swap,"\begin{pmatrix} 0 \\ 1  \end{pmatrix}"] & & \mathbb{F} \arrow[d,swap,"\begin{pmatrix} 0 \\ 1  \end{pmatrix}"] \arrow[dr,"0"] \\
 & & & & & & \mathbb{F}^2 & & \mathbb{F}^2 & 0 \\
\bullet \arrow[uu,dash,thick,shorten <= -.5em,shorten >= -.5em] & & \bullet \arrow[uu,dash,thick,shorten <= -.5em,shorten >= -.5em] \arrow[rr,dash,thick,shorten <= -.5em,shorten >= -.5em] & & \bullet & & \mathbb{F} \arrow[u,"\begin{pmatrix} 1 \\ 0  \end{pmatrix}"] & & \mathbb{F} \arrow[r,"0"] \arrow[u,"\begin{pmatrix} 1 \\ 0  \end{pmatrix}"] & 0 & 0 \arrow[l,swap,"0"] \arrow[ul,swap,"0"] \\[-12pt]
X \arrow[rrr,hook,"\iota"{font = \small}] & & & Y & & & F \arrow[rrr,"\iota_*"{font = \small},shorten <= 1em,shorten >= 1em,maps to] & &  & \iota_*F \\[-20pt]
 & & & & & & \iota^*G & & & \arrow[lll,swap,"\iota^*"{font = \small},shorten <= 1em,shorten >= 1em, maps to] G
\end{tikzcd}
\caption{Pushforward and pullback of cellular sheaves along an inclusion of simplicial complexes.}
\label{398}
\end{figure}
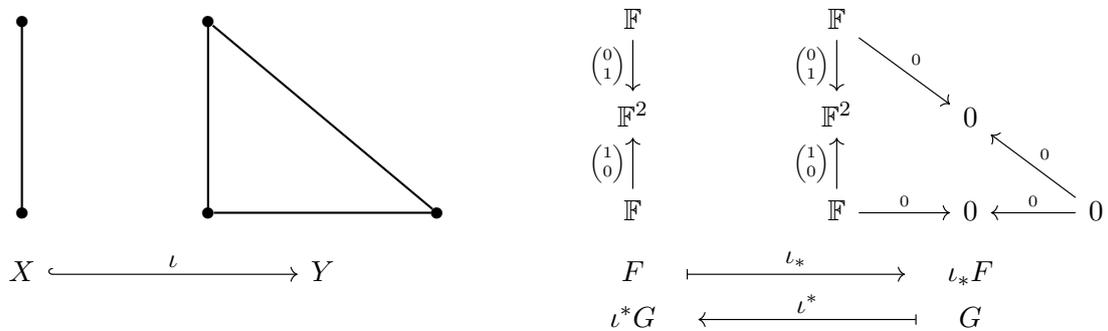
\end{example}

\noindent
The direct and inverse image functors are also called pushforward and pullback. We will need the following properties of these functors for the construction of morphisms induced by continuous maps in cohomology (see Section \ref{710} and \ref{809}). 

\begin{theorem} \label{573} 
Let $X$ and $Y$ be topological spaces and $f\colon X\rightarrow Y$ a continuous map. 
\begin{enumerate}
\item \cite[Page 97]{iversen} The functor $f_*\colon \mathbf{Shv}(X,\mathbf{M})\rightarrow \mathbf{Shv}(Y,\mathbf{M})$ is additive and left exact.
\item \cite[Page 97]{iversen} The functor $f^{*}\colon \mathbf{Shv}(Y,\mathbf{M})\rightarrow \mathbf{Shv}(X,\mathbf{M})$ is additive and exact.
\item \cite[Theorem 4.8]{iversen} The functor $f^*$ is left adjoint to the functor $f_*$. 
\end{enumerate}
\end{theorem}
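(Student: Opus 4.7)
The plan is to attack the three parts separately, exploiting the fact that $f_*$ is defined directly in terms of sections while $f^*$ is built from a colimit followed by sheafification; the former permits direct section-wise reasoning, while the latter is most conveniently analyzed via the adjunction and stalks. I would therefore prove the parts in the order (1), (3), (2), so that the adjunction is available when analyzing $f^*$; setting up that adjunction is the main obstacle.

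For part (1), the key observation is that $f_*\mathcal{F}(U)=\mathcal{F}(f^{-1}(U))$ and $(f_*\phi)_U=\phi_{f^{-1}(U)}$, so every structural property is inherited section-wise from $\mathbf{M}$. Additivity is immediate: for $\phi,\psi\colon\mathcal{F}\to\mathcal{G}$ one has $(f_*(\phi+\psi))_U=\phi_{f^{-1}(U)}+\psi_{f^{-1}(U)}=(f_*\phi+f_*\psi)_U$, and zero morphisms and finite direct sums of sheaves are computed section-wise. For left exactness, given a short exact sequence $0\to\mathcal{F}'\to\mathcal{F}\to\mathcal{F}''\to 0$ in $\mathbf{Shv}(X,\mathbf{M})$, one needs exactness of $0\to f_*\mathcal{F}'\to f_*\mathcal{F}\to f_*\mathcal{F}''$. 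Since kernels in $\mathbf{Shv}$ are computed section-wise and the sections of a short exact sequence of sheaves over any open set form a left exact (though generally not right exact) sequence, exactness over each $U\in\mathbf{Open}(Y)$ reduces to left exactness of the sections functor on $f^{-1}(U)$, which is standard.

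For part (3), I would construct the adjunction isomorphism $\mathrm{Hom}_{\mathbf{Shv}(X,\mathbf{M})}(f^*\mathcal{F},\mathcal{G})\cong\mathrm{Hom}_{\mathbf{Shv}(Y,\mathbf{M})}(\mathcal{F},f_*\mathcal{G})$ in two stages. First, at the presheaf level, let $f^+\mathcal{F}$ denote the presheaf $U\mapsto\mathrm{colim}_{f(U)\subseteq V}\mathcal{F}(V)$ whose sheafification is $f^*\mathcal{F}$. A presheaf morphism $\mathcal{F}\to f_*\mathcal{G}$ amounts to maps $\mathcal{F}(V)\to\mathcal{G}(f^{-1}(V))$ compatible with restrictions; for each open $U\subseteq X$ and each $V$ with $f(U)\subseteq V$, postcomposing with $\mathcal{G}(U\hookrightarrow f^{-1}(V))$ yields a cocone on the colimit system defining $f^+\mathcal{F}(U)$, hence a canonical map $f^+\mathcal{F}(U)\to\mathcal{G}(U)$, and these assemble into a presheaf morphism $f^+\mathcal{F}\to\mathcal{G}$. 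The inverse direction, together with naturality in $\mathcal{F}$ and $\mathcal{G}$, is checked by a direct diagram chase. Passing to sheaves, the universal property of sheafification yields $\mathrm{Hom}_{\mathbf{Shv}}(f^*\mathcal{F},\mathcal{G})\cong\mathrm{Hom}_{\mathbf{pShv}}(f^+\mathcal{F},\mathcal{G})$ whenever $\mathcal{G}$ is a sheaf, completing the bijection.

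For part (2), additivity of $f^*$ follows from the functoriality of the colimit in Definition \ref{894} together with functoriality of sheafification. Preservation of cokernels is then automatic from (3), since every left adjoint preserves all colimits. For preservation of kernels, I would use the stalk description: a cofinality argument identifies $(f^*\mathcal{F})_x\cong\mathcal{F}_{f(x)}$, because the system of open neighborhoods of $f(x)$ is cofinal among the open $V\supseteq f(U)$ as $U$ shrinks to $x$, and sheafification preserves stalks. Since exactness in $\mathbf{Shv}(X,\mathbf{M})$ can be tested stalk-wise and each stalk functor is exact as a filtered colimit of exact functors, $f^*$ preserves short exact sequences, hence is exact.
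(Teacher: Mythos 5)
Your proposal is correct, but note that the paper does not actually prove this statement: Theorem \ref{573} is quoted from Iversen's \emph{Cohomology of Sheaves} (the citations on page 97 and Theorem 4.8 there), so there is no in-paper argument to compare against. What you have written is essentially the standard textbook proof that the cited reference supplies: $f_*$ inherits additivity and left exactness section-wise because kernels of sheaf morphisms are computed on sections; the adjunction $f^*\dashv f_*$ is built at the presheaf level from the cocone $\mathcal{F}(V)\to\mathcal{G}(f^{-1}(V))\to\mathcal{G}(U)$ for $f(U)\subseteq V$ and then transported through the universal property of sheafification; and exactness of $f^*$ follows from $(f^*\mathcal{F})_x\cong\mathcal{F}_{f(x)}$ together with stalk-wise detection of exactness. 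Your ordering (1), (3), (2) is sensible, since the colimit-preservation half of (2) is free once (3) is in hand. One small imprecision: the stalk functor is not a ``filtered colimit of exact functors'' --- the section functors on sheaves are only left exact --- the correct justification is that the stalk functor preserves kernels because kernels are computed section-wise and filtered colimits are exact in $\mathbf{Mod}_R$ (and degree-wise in $\mathbf{grMod}_R$), and preserves cokernels because it is itself a left adjoint (to the skyscraper functor) or, equivalently, because sheafification preserves stalks. This does not affect the validity of the argument.
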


\subsection{Sheaf cohomology} \label{260}

Sheaf cohomology gives us some information about the internal structure of a sheaf. In general, sheaf cohomology is defined as the right derived functors \cite[Chapter I.7]{iversen} of the so-called global section functor.

\begin{definition}[Section functor] \label{661}
Let $U\subseteq X$ be an open subset of a topological space $X$. Then $\Gamma(U,-)\colon \mathbf{Shv}(X,\mathbf{M})\rightarrow \mathbf{M}$ is the functor defined by $\mathcal{F}\mapsto \mathcal{F}(U)$ for all $\mathcal{F}\in \mathbf{Shv}(X,\mathbf{M})$ and $(\phi\colon\mathcal{F}\rightarrow \mathcal{G}) \mapsto \big(\phi_U\colon\mathcal{F}(U)\rightarrow\mathcal{G}(U)\big)$ for all $\phi\in \text{Hom}(\mathcal{F},\mathcal{G})$. If $U=X$, then $\Gamma(X,-)$ is called the \emph{global section functor}.
\end{definition} 

\noindent
A construction of sheaf cohomology can be found in Section \ref{658}. This is needed for the construction of morphisms induced by continuous maps and for some proofs. For the rest of this work, one can consider sheaf cohomology as a family of black box functors.  

\begin{definition}[Sheaf cohomology] \label{613}
Let $X$ be a topological space. The \emph{$k$-th sheaf cohomology functor} $H^k(X,-)\colon \mathbf{Shv(X,\mathbf{M})}\rightarrow \mathbf{M}$ is defined as the $k$-th right derived functor (see Section \ref{656}) of the global section functor $\Gamma(X,-)$, i.e.\ $H^k(X,-)\coloneqq R^k\Gamma(X,-)$ for all $k\in\mathbb{N}_0$. 
\end{definition}  

\noindent
If $k=0$, we obtain $H^0(X,-)\cong \Gamma(X,-)$, i.e.\ $H^0(X,\mathcal{F})\cong \mathcal{F}(X)$ for all $\mathcal{F}\in\mathbf{Shv}(X,\mathbf{M})$. Hence sheaf cohomology in dimension zero yields the space of global sections of a sheaf. For $k>0$ sheaf cohomology is more difficult to interpret. A common proverb is that sheaf cohomology describes the obstructions to solve a geometric problem globally when it can be solved locally. The general definition of sheaf cohomology is very abstract but there is the alternative somewhat more concrete construction of \v{C}ech cohomology, described in Section \ref{967}, that agrees with sheaf cohomology in many situations. In the case of a cellular sheaf on a simplicial complex, we use \v{C}ech cohomology to derive an explicit cochain complex such that the cohomology of this complex computes sheaf cohomology.

\begin{example} \label{374}
Let $X$ be a finite abstract simplicial complex and $F\colon \mathbf{X}\rightarrow \mathbf{M}$ a cellular sheaf on $X$. Let $X^k$ denote the set of $k$-simplices of $X$. Then the sheaf cohomology of $F$ is given by the cohomology of the cochain complex $(C^\bullet(X,F),\delta^\bullet)$ defined by 
\begin{equation} \label{604}
\begin{aligned}
& C^k(X,F)\coloneqq \bigoplus_{\sigma\in X^k} F(\sigma) \\
& \delta^k\coloneqq\bigoplus_{\tau\in X^{k+1}}\big(\sum_{\tau \geq \sigma \in X^k} [\sigma:\tau] F(\sigma\rightarrow \tau)\circ p_\sigma\big)
\end{aligned}
\end{equation}
where $[\sigma:\tau]$ is an orientation coefficient on the simplices (as in simplicial (co)homology). The cohomology of $(C^\bullet(X,F),\delta^\bullet)$ is usually called the cellular sheaf cohomology of $F$ \cite[Definition 6.2.1]{curry}. In Section \ref{233} we show how to derive (\ref{604}) from \v{C}ech cohomology, and that for cellular sheaves on simplicial complexes sheaf and \v{C}ech cohomology agree. Hence, the cohomology of (\ref{604}) indeed computes sheaf cohomology. This is already proved in \cite{curry}. We provide this alternative proof as a basis for the derivation of morphisms induced in cohomology by simplicial maps using \v{C}ech cohomology (Section \ref{342}). Figure \ref{433} shows an example of the constant $\mathbb{F}$-valued sheaf $\mathbb{F}_X$ on a simplicial triangle where the direction of the edges denotes the orientation.
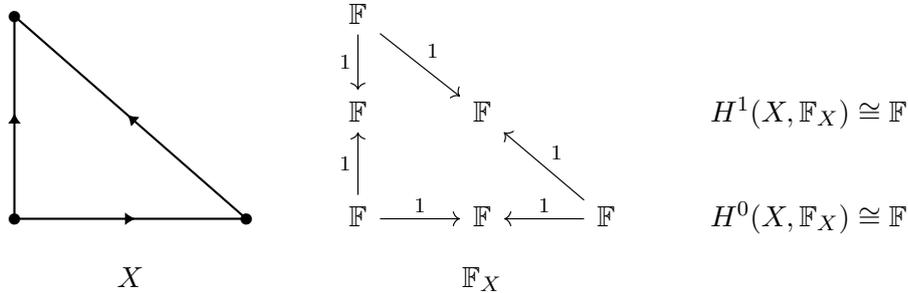
\begin{figure}[h]
\centering
\begin{tikzcd}
\bullet  & & & \mathbb{F} \arrow[d,swap,"1"] \arrow[dr,"1"] \\
& & & \mathbb{F} & \mathbb{F} & & H^1(X,\mathbb{F}_X)\cong\mathbb{F} \\
\bullet \arrow[rr,dash,thick,shorten <= -.55em,shorten >= -.55em,midarrow] \arrow[uu,dash,thick,shorten <= -.55em,shorten >= -.55em,midarrow] & & \bullet \arrow[uull,dash,thick,shorten <= -.8em,shorten >= -.8em,midarrow] & \mathbb{F} \arrow[r,"1"] \arrow[u,"1"] & \mathbb{F} & \arrow[l,swap,"1"] \mathbb{F} \arrow[ul,swap,"1"] & H^0(X,\mathbb{F}_X)\cong\mathbb{F} \\[-15pt]
& X & & & \mathbb{F}_X
\end{tikzcd}
\caption{The constant $\mathbb{F}$-valued cellular sheaf on a simplicial triangle and its sheaf cohomology.}
\label{433}
\end{figure} 
By computing the cochain complex (\ref{604}) for $\mathbb{F}_X$, one can see that it is exactly the cochain complex of simplicial cohomology. Hence, the cohohomology of this complex is the simplicial cohomology of $X$ with $\mathbb{F}$-coefficients. 
\end{example}  

\noindent
In Example \ref{374}, we saw that we can derive simplicial cohomology from sheaf cohomology. This raises the question if we can also derive the morphisms induced by a simplicial map in simplicial cohomology from sheaf theory. This question is answered by a general construction of morphisms induced by continuous maps between the sheaf cohomology modules of a sheaf and its inverse image. Let $f:X\rightarrow Y$ be a continuous map and $\mathcal{F}$ a sheaf on $Y$. Then, for every $k\in\mathbb{N}_0$, there is an induced morphism \cite[page 100]{iversen}  
\begin{equation*} %\label{203}
H^k(f)\colon H^k(Y,\mathcal{F})\rightarrow H^k(X,f^*\mathcal{F}) \quad .
\end{equation*}
In Section \ref{710}, we show how to construct these induced morphisms.

\begin{example} \label{548}
Let $X$ and $Y$ be finite abstract simplicial complexes, $f\colon X\rightarrow Y$ a simplicial map and $F$ a cellular sheaf on $Y$. Then the induced morphism $H^k(f)\colon H^k(Y,F)\rightarrow H^k(X,f^*F)$ is given by $H^k(f^\bullet)$ where $f^\bullet\colon C^\bullet(Y,F)\rightarrow C^\bullet(X,f^*F)$ is a cochain morphism on (\ref{604}) such that $f^k\colon C^k(Y,F) \rightarrow C^k(X,f^*F)$ is defined by
\begin{equation*} %\label{519}
f^k\coloneqq \underset{\sigma\in X}{\bigoplus}\text{ } p_{f(\sigma)}
\end{equation*}
for all $k\in\mathbb{N}_0$. Note that $C^k(X,f^*F)\cong\underset{\sigma\in X}{\bigoplus} F\big(f(\sigma)\big)$. In Section \ref{342} we show how to derive this formula using \v{C}ech cohomology. If $F$ is the constant cellular sheaf on $Y$, then $H^k(f)$ agrees with the morphism induced by the simplicial cohomology functor.  
\end{example}

\subsection{Persistence} \label{182}

The idea of persistent homology is to study the evolution of homological features along a linear diagram of topological spaces. On an abstract level, the two main principles on which the construction of persistent homology or the construction of a persistence barcode is based are the functoriality of homology and the unique decomposability of persistence modules into interval modules. Let $\mathbf{A}$ be some category and $\vec{S}\colon\mathbb{N}_0\rightarrow \mathbf{A}$ be a functor on the category corresponding to the poset $(\mathbb{N}_0,\leq)$. We use the notation $S_i\coloneqq\vec{S}(i)$ and $S_i^{i+1}\coloneqq\vec{S}(i\rightarrow i+1)$ for all $i\in\mathbb{N}_0$. Let $T\colon\mathbf{A}\rightarrow\mathbf{Mod}_R$ be a functor to the category of $R$-modules. The composition $T\circ \vec{S}\colon\mathbb{N}_0\rightarrow\mathbf{Mod}_R$ depicted in the following diagram is a functor from $\mathbb{N}_0$ to $\mathbf{Mod}_R$
\begin{equation*} %\label{869} 
\begin{tikzcd}
\mathbb{N}_0 \arrow[d,"\vec{S}"] & 0 \arrow[r] &[15pt] 1 \arrow[r] &[15pt] 2 \arrow[r] &[15pt] \cdots \\
\mathbf{A} \arrow[d,"T"] & S_0 \arrow[r,"S_0^1"] & S_1 \arrow[r,"S_1^2"] & S_2 \arrow[r,"S_2^3"] & \cdots \\
\mathbf{Mod}_R & T(S_0) \arrow[r,"T(S_0^1)"] & T(S_1) \arrow[r,"T(S_1^2)"] & T(S_2) \arrow[r,"T(S_2^3)"] & \cdots
\end{tikzcd} \quad .
\end{equation*}

\noindent
We will call such a functor a \emph{persistence module}. Functors of the form $\vec{D}\colon\mathbb{N}_0\rightarrow\mathbf{vec}_\mathbb{F}$, i.e.\ pointwise finite-dimensional persistence modules, are composed of so-called \emph{interval modules} 
$\vec{I}_{[a,b]}\colon\mathbb{N}_0\rightarrow \mathbf{vec}_\mathbb{F}$ 
\begin{equation*} %\label{213}
\begin{tikzcd}
\mathbb{N}_0 \arrow[d,"\vec{I}_{[a,b]}"] & 0 \arrow[r] & \cdots \arrow[r] & a-1 \arrow[r] & a \arrow[r] & \cdots \arrow[r] & b \arrow[r] & b+1 \arrow[r] & \cdots \\
\mathbf{vec}_\mathbb{F} & 0 \arrow[r,"0"] & \cdots \arrow[r,"0"]  & 0 \arrow[r,"0"] & \mathbb{F} \arrow[r,"\text{id}"] & \cdots \arrow[r,"\text{id}"] & \mathbb{F} \arrow[r,"0"] & 0 \arrow[r,"0"] & \cdots 
\end{tikzcd}
\end{equation*}
with $a\leq b\in\mathbb{N}_0\cup\{\infty\}$, as stated by the following theorem.
\begin{theorem} \label{817} \cite[Theorem 1.9]{oudot}
For all $\vec{D}\in\mathbf{Fun}(\mathbb{N}_0,\mathbf{vec}_\mathbb{F})$ we have $\vec{D}\cong \bigoplus_{i\in I} \vec{I}_{[a_i,b_i]}$ with a uniquely determined collection of \emph{intervals} $[a_i,b_i]$, i.e.\ $\vec{D}$ is uniquely determined up to isomorphism by the multiset of intervals $\{\{[a_i,b_i]\text{ } i\in I\}\}$. 
\end{theorem}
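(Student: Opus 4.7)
The plan is to split the statement into an existence claim and a uniqueness claim, both of which I would handle through the \emph{Zomorodian-Carlsson correspondence}. Concretely, I would assign to a persistence module $\vec{D}$ the $\mathbb{N}_0$-graded $\mathbb{F}[t]$-module $M(\vec{D}) \coloneqq \bigoplus_{i\in \mathbb{N}_0} D_i$, where the variable $t$ acts on the degree-$i$ component by $S_i^{i+1}$. This defines an equivalence between $\mathbf{Fun}(\mathbb{N}_0,\mathbf{vec}_\mathbb{F})$ and the category of $\mathbb{N}_0$-graded $\mathbb{F}[t]$-modules with finite-dimensional graded pieces, and a short check shows that $\vec{I}_{[a,b]}$ with $b<\infty$ corresponds to the shifted cyclic module $(\mathbb{F}[t]/(t^{b-a+1}))(-a)$, while $\vec{I}_{[a,\infty]}$ corresponds to $\mathbb{F}[t](-a)$. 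Decomposing $\vec{D}$ into intervals is therefore equivalent to decomposing $M(\vec{D})$ into such cyclic summands.

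For existence when $M(\vec{D})$ is \emph{finitely generated}, I would invoke the structure theorem for finitely generated graded modules over the graded principal ideal domain $\mathbb{F}[t]$: every such module splits as a finite direct sum of shifted cyclic summands $\mathbb{F}[t](-a_i)$ and $\mathbb{F}[t]/(t^{n_j})(-a'_j)$, which translates back to the desired interval decomposition. To upgrade this to all pointwise finite-dimensional $\vec{D}$ (which need not be finitely generated, since infinitely many bars may be born along the filtration), I would use a stage-by-stage coherent basis construction: inductively choose a basis of $D_i$ that is the disjoint union of images of basis vectors from $D_{i-1}$ (for surviving classes) and freshly selected complements of $\mathrm{im}(S_{i-1}^i)$ (for newly born classes), while simultaneously tracking which surviving vectors will eventually die and splitting those off into finite bars. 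Iterated across all $i\in\mathbb{N}_0$, this procedure assembles $\vec{D}$ as an internal direct sum of interval submodules.

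Uniqueness follows from the Krull-Schmidt theorem once one verifies that each interval module is indecomposable with a local endomorphism ring. A direct computation shows $\mathrm{End}(\vec{I}_{[a,b]}) \cong \mathbb{F}$, which is local, so Krull-Schmidt forces any two decompositions into intervals to agree as multisets of summands. As an independent check, and as a constructive recovery of the barcode, one may read off the multiset from the transition ranks $r(a,b) \coloneqq \mathrm{rank}(S_{b-1}^b\circ\cdots\circ S_a^{a+1})$ by the inclusion-exclusion formula $r(a,b) - r(a-1,b) - r(a,b+1) + r(a-1,b+1)$, with the standard boundary conventions $D_{-1} = 0$ and $r(a,a) = \dim D_a$.

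The main obstacle is the existence half in the absence of finite generation: the PID structure theorem applies only to finitely generated modules, and one cannot simply exhaust $M(\vec{D})$ by finitely generated submodules and pass to a limit without the decompositions becoming incompatible. The coherent basis construction described above is the cleanest workaround in the $\mathbb{N}_0$-indexed setting; its verification amounts to checking that the lifts and complements chosen at each stage really do glue into internal interval summands. This bookkeeping is manageable for $\mathbb{N}_0$ because the index is well-ordered, whereas for richer index categories one would have to appeal to Crawley-Boevey's more general decomposition theorem for pointwise finite-dimensional modules.
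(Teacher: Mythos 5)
The paper offers no proof of this statement; it is quoted verbatim from Oudot's book (Theorem 1.9), whose proof in turn rests on Crawley-Boevey's decomposition theorem for pointwise finite-dimensional modules together with the Krull-Remak-Schmidt-Azumaya theorem. So there is no in-paper argument to compare against, and your proposal has to stand on its own. Its architecture is the standard correct one: translate intervals to shifted cyclic graded $\mathbb{F}[t]$-modules, handle existence by a compatible-basis construction, and get uniqueness either from local endomorphism rings or from the rank invariant. Two remarks on the uniqueness half: since $I$ may be infinite (new bars can be born at every index), classical Krull-Schmidt does not apply and you need Azumaya's version for arbitrary direct sums of modules with local endomorphism rings — your computation $\mathrm{End}(\vec{I}_{[a,b]})\cong\mathbb{F}$ is exactly the hypothesis it needs, so this is only a matter of citing the right theorem; alternatively, your inclusion-exclusion formula in the transition ranks already determines each multiplicity from isomorphism invariants and gives uniqueness with no recourse to Krull-Schmidt at all.

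The genuine soft spot is the existence step for non-finitely-generated $\vec{D}$, and your literal description of the induction — "choose a basis of $D_i$ that is the disjoint union of images of basis vectors from $D_{i-1}$ and freshly selected complements of $\mathrm{im}(S_{i-1}^i)$" — fails as stated. Take $D_0=D_1=\mathbb{F}^2$ with $S_0^1=\mathrm{id}$ and $D_2=\mathbb{F}$ with $S_1^2=(1\ \ 1)$: the greedy choice of basis $\{u,v\}$ at index $0$ produces two generators whose images in $D_2$ are linearly dependent, so the two cyclic submodules they generate do not sum directly; the correct decomposition $\vec{I}_{[0,2]}\oplus\vec{I}_{[0,1]}$ requires replacing $v$ by $u-v$ at index $0$, i.e.\ the basis of $D_0$ must already be adapted to the future kernel filtration $\ker(S_0^1)\subseteq\ker(S_0^2)\subseteq\cdots$. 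The repair — and the actual mathematical content of the theorem — is that at each index you must choose a basis adapted \emph{simultaneously} to the incoming image filtration $\{\mathrm{im}(S_l^i)\}_{l\leq i}$ and the outgoing kernel filtration $\{\ker(S_i^j)\}_{j\geq i}$; this is possible because any two flags in a finite-dimensional space admit a common adapted basis (both filtrations stabilize by pointwise finite-dimensionality, which also settles the $b=\infty$ bars), and it is exactly the step that breaks for index posets that are not totally ordered. Your phrase "tracking which surviving vectors will eventually die" gestures at this, but until the two-flag compatibility is invoked and the inductive coherence of the lifts across all indices is verified, the existence half remains an assertion rather than a proof.
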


\begin{remark} \label{151}
We call a functor $\cev{D}\in\mathbf{Fun}(\mathbb{N}_0^\text{op},\mathbf{Mod}_R)$ a \emph{copersistence module}. If $\cev{D}\in\mathbf{Fun}(\mathbb{N}_0^\text{op},\mathbf{vec}_\mathbb{F})$, by applying the functor $\text{dual}\colon\mathbf{vec}_\mathbb{F}^\text{op}\xrightarrow{\cong}\mathbf{vec}_\mathbb{F}$ that sends a vector space to its dual, we obtain a functor $\text{dual}\circ \cev{D}\in\mathbf{Fun}(\mathbb{N}_0,\mathbf{vec}_\mathbb{F})$. By Theorem \ref{817}, we have $\text{dual}\circ \cev{D}\cong \bigoplus_{i\in I} \vec{I}_{[a_i,b_i]}$, thus $\cev{D}\cong \bigoplus_{i\in I} \text{dual}\circ \vec{I}_{[a_i,b_i]}\cong\bigoplus_{i\in I} \cev{I}_{[a_i,b_i]}$, where $\cev{I}_{[a_i,b_i]}\in\mathbf{Fun}(\mathbb{N}_0^\text{op},\mathbf{vec}_\mathbb{F})$ is an interval module with inverted arrow orientations. Therefore, every copersistence module $\cev{D}\in\mathbf{Fun}(\mathbb{N}_0^\text{op},\mathbf{vec}_\mathbb{F})$ has a decomposition into dual interval modules.
\end{remark}

\begin{definition}[Persistence barcode] \label{611}
Let $\vec{D}\in\mathbf{Fun}(\mathbb{N}_0,\mathbf{vec}_\mathbb{F})$ be a persistence module with interval decomposition $\vec{D}\cong \bigoplus_{i\in I} \vec{I}_{[a_i,b_i]}$. Then the \emph{persistence barcode} of $\vec{D}$ is defined as the  corresponding multiset of intervals
\begin{equation*} \label{601}
\text{BC}(\vec{D})\coloneqq \{\{[a_i,b_i]\text{ }|\text{ }i\in I\}\} \quad .
\end{equation*}
Analogously we define the barcode of a copersistence module.
\end{definition}

\noindent
Now assume that $\mathbf{A}=\mathbf{Top}$, $\vec{S}=\vec{X}\colon\mathbb{N}_0\rightarrow\mathbf{Top}$ is a linear diagram of topological spaces and continuous maps, $T=H^{\text{sing}}_k(-,\mathbb{F})$ is the singular homology functor and $H^{\text{sing}}_k(X_i,\mathbb{F})$ is finite-dimensional for all $i\in\mathbb{N}_0$. Then, by Theorem \ref{817}, we have $H^{\text{sing}}_k(-,\mathbb{F})\circ \vec{X}\cong \bigoplus_{i\in I} \vec{I}_{[a_i,b_i]}$ and the intervals in this decomposition correspond to homology classes in the image of $\vec{X}$ that are born at index $a_i$ and die at index $b_i$. One can define the $k$-dimensional \emph{persistent homology} of $\vec{X}$ as the collection of interval modules corresponding to $\text{BC}(H^{\text{sing}}_k(-,\mathbb{F})\circ \vec{X})$.

In a computational context, as described in the introduction, one considers simplicial complexes and simplicial homology instead of topological spaces and singular homology. We obtain this theory of \emph{simplicial persistent homology} for $\mathbf{A}=\mathbf{simp}$ the category of finite simplicial complexes, $\vec{S}=\vec{K}\colon\mathbb{N}_0\rightarrow\mathbf{simp}$ and $T=H^{\text{simp}}_k(-,\mathbb{F})$ the simplicial homology functor. 

The following Theorem states that persistence modules correspond to graded modules over polynomial rings. The second part of the theorem is sometimes called the Zomorodian-Carlsson representation theorem.  

\begin{definition} \label{813}
Define $\eta\colon\mathbf{Fun}(\mathbb{N}_0,\mathbf{Mod}_R)\rightarrow \mathbf{grMod}_{R[t]}$ to be the functor that sends a persistence module $\vec{D}\colon\mathbb{N}_0\rightarrow \mathbf{Mod}_R$ to the graded module $\eta(\vec{D})\coloneqq \bigoplus_{n\in\mathbb{N}_0}D_n$ with $t\cdot \coloneqq D_n^{n+1}\colon D_n\rightarrow D_{n+1}$, with the obvious action on morphisms. Define $\epsilon\colon\mathbf{grMod}_{R[t]}\rightarrow \mathbf{Fun}(\mathbb{N}_0,\mathbf{Mod}_R)$ to be the functor that sends a graded $R[t]$-module $M=\bigoplus_{n\in\mathbb{N}_0}M_n$ to the persistence module $\epsilon(M)\colon\mathbb{N}_0\rightarrow \mathbf{Mod}_R$ such that $\epsilon(M)_n\coloneqq M_n$ and $\epsilon(M)_n^{n+1}\coloneqq t \cdot\colon M_n\rightarrow M_{n+1}$, with the obvious action on morphisms.
\end{definition}

\begin{theorem} \label{960} \cite{corbet,carlsson}
There are natural isomorphisms $\epsilon\circ\eta\cong \text{id}$ and $\eta\circ\epsilon\cong\text{id}$, i.e.\ the categories $\mathbf{Fun}(\mathbb{N}_0,\mathbf{Mod}_R)$ and $\mathbf{grMod}_{R[t]}$ are equivalent.

Moreover, the functors $\eta$ and $\epsilon$ restrict to equivalences between the subcategories of persistence modules of finite type\footnote{A persistence module $\vec{D}\colon\mathbb{N}_0\rightarrow \mathbf{Mod}_R$ is of finite type if $D_i$ is finitely generated for all $i\in\mathbb{N}_0$ and there exists an $m\in\mathbb{N}_0$ such that $D_i^{i+1}$ is an isomorphism for all $i\geq m$.} and finitely generated graded $R[t]$-modules.
\end{theorem}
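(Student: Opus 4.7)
The plan is to observe that the asserted equivalence is essentially tautological: a functor $\vec{D}\colon \mathbb{N}_0\to \mathbf{Mod}_R$ is the same data as a sequence of $R$-modules $(D_n)_{n\in\mathbb{N}_0}$ together with $R$-linear maps $D_n\to D_{n+1}$, and a graded $R[t]$-module $M=\bigoplus_n M_n$ with $t$ placed in degree $1$ is the same data as a graded $R$-module together with a degree-$1$ $R$-linear endomorphism $t\cdot$. Once I verify that the functors $\eta$ and $\epsilon$ of Definition~\ref{813} unwind to these two descriptions of the same object, the first statement falls out immediately.

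First I would check that $\epsilon\circ\eta$ and $\eta\circ\epsilon$ are the identity on objects on the nose, by comparing the two constructions term-by-term. Then I would check the same on morphisms: a natural transformation $\phi\colon\vec{D}\to\vec{D}'$ is by definition a family $(\phi_n\colon D_n\to D_n')$ making the squares involving the transitions $D_n^{n+1}$ and $(D')_n^{n+1}$ commute, and under the correspondence these are exactly the conditions that $\bigoplus_n\phi_n$ is an $R[t]$-linear homomorphism of graded modules. This yields strict inverses, hence in particular natural isomorphisms, and proves the first claim.

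For the refinement to the subcategories of finite type modules, I would treat the two directions separately. Given $\vec{D}$ of finite type with threshold $m$, I would show that $\eta(\vec{D})=\bigoplus_n D_n$ is generated over $R[t]$ by the union of finite generating sets of the finitely many modules $D_0,\ldots,D_m$: for $n>m$, every element of $D_n$ equals $t^{n-m}\cdot y$ for some $y\in D_m$ since each $D_i^{i+1}$ is an isomorphism for $i\geq m$. Conversely, given a finitely generated graded $R[t]$-module $M$ with homogeneous generators concentrated in degrees $\leq m$, each component $M_n$ is spanned as an $R$-module by finitely many elements $t^{n-k}\cdot g$ with $g$ a generator of degree $k\leq\min(n,m)$, hence is finitely generated over $R$; and for $n\geq m$ the identity $M_{n+1}=t\cdot M_n$ shows that $t\cdot$ is surjective.

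The main obstacle I foresee is the injectivity of $t\cdot\colon M_n\to M_{n+1}$ for large $n$, which is required to promote eventual surjectivity to eventual bijectivity and so match the definition of finite type. This demands control of the graded submodule $\ker(t\cdot)\subseteq M$; the cleanest route is to invoke Noetherianity of $R[t]$ (which holds when $R$ is Noetherian, and in particular when $R$ is the field $\mathbb{F}$ of the classical Zomorodian--Carlsson setting) to conclude that $\ker(t\cdot)$ is itself finitely generated, so that its contributions lie in finitely many degrees and the transition becomes injective past the largest such degree. This is the step I would spell out most carefully, as every other part of the proof is a direct unwinding of definitions.
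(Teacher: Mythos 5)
The paper does not prove this theorem; it cites it to Corbet--Kerber and Zomorodian--Carlsson, and your argument is essentially the standard proof from those sources, so there is nothing in the paper to compare against. Your unwinding of the strict mutual inverseness of $\eta$ and $\epsilon$ on objects and on morphisms is correct, as are both directions of the finite-type correspondence (generation of $\eta(\vec{D})$ by generators of $D_0,\dots,D_m$, finite generation of each $M_n$ over $R$, and eventual surjectivity of $t\cdot$).

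Two points deserve attention. First, you are right that eventual injectivity of $t\cdot$ is the only nontrivial step and that it genuinely requires $R$ to be Noetherian: for non-Noetherian $R$ with a strictly increasing chain $(a_1)\subsetneq(a_1,a_2)\subsetneq\cdots$, the cyclic module $M=R[t]/\langle a_nt^n : n\geq 1\rangle$ has $M_n\cong R/(a_1,\dots,a_n)$ with $t\cdot$ surjective but never eventually injective, so the ``moreover'' part fails. The theorem as printed omits this hypothesis (the paper reinstates it only in Proposition~\ref{454}), so you should state it explicitly. Second, your inference ``$\ker(t\cdot)$ is finitely generated, so its contributions lie in finitely many degrees'' is not a consequence of finite generation alone ($R[t]$ itself is finitely generated but supported in all degrees); the missing link is that $t$ annihilates $K:=\ker(t\cdot)$, so if $K$ is generated over $R[t]$ by homogeneous elements $k_1,\dots,k_r$ of degrees $e_1,\dots,e_r$ then $K=\sum_i R[t]\,k_i=\sum_i R\,k_i$ is concentrated in degrees $\leq\max_i e_i$, whence $t\cdot\colon M_n\to M_{n+1}$ is injective for $n>\max_i e_i$. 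With that sentence added, the proof is complete.
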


\noindent
The following proposition states that the functors $\eta$ and $\epsilon$ are in some sense compatible with the cohomology functor. Although this is a well-known result, for the sake of completeness we provide a proof in the appendix. Given a functor $F\colon\mathbf{A}\rightarrow \mathbf{B}$, we denote by $\underline{F}\colon \mathbf{Fun}(\mathbf{I},\mathbf{A})\rightarrow \mathbf{Fun}(\mathbf{I},\mathbf{B})$ the functor defined by post-composition with $F$, i.e.\ $\underline{F}(-)\coloneqq F\circ -$. This can also be viewed as componentwise application of $F$.

\begin{proposition} \label{635}
The following diagram commutes 
\begin{equation*} \label{102}
\begin{tikzcd}
\mathbf{coCh}(\mathbf{grMod}_{R[t]}) \arrow[r,shift left=1,"\underline{\epsilon}"] \arrow[d,swap,"H^k"] &[10pt] \arrow[l,shift left=1,"\underline{\eta}"] \mathbf{coCh}\big(\mathbf{Fun}(\mathbb{N}_0,\mathbf{Mod}_R)\big) \arrow[r,shift left=1,"cr^{-1}"{xshift=3pt}] & \arrow[l,shift left=1,"cr"] \mathbf{Fun}\big(\mathbb{N}_0,\mathbf{coCh}(\mathbf{Mod}_R)\big) \arrow[d,"\underline{H}^k"]  \\
\mathbf{grMod}_{R[t]} \arrow[rr,shift left=1,"\epsilon"] && \arrow[ll,shift left=1,"\eta"] \mathbf{Fun}(\mathbb{N}_0,\mathbf{Mod}_R)
\end{tikzcd} 
\end{equation*}
i.e.\ there are natural isomorphisms $\eta\circ \underline{H}^k\cong H^k\circ \underline{\eta}\circ cr$ and $\epsilon\circ H^k\cong \underline{H}^k\circ cr^{-1}\circ\underline{\epsilon}$ where $cr$ and $cr^{-1}$ are obtained by currying \footnote{They take a cochain complex of linear diagrams to a linear diagram of cochain complexes and vice versa.} \cite{nlab:currying}. 
\end{proposition}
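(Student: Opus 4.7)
The plan is to establish one of the two claimed natural isomorphisms, since the other follows by post-composing with the inverse equivalence from Theorem \ref{960}. I will aim for $\epsilon\circ H^k\cong \underline{H}^k\circ cr^{-1}\circ\underline{\epsilon}$. First, I would unpack the two composites on an object $M^\bullet \in \mathbf{coCh}(\mathbf{grMod}_{R[t]})$. Each $M^k$ decomposes as $\bigoplus_{n\in\mathbb{N}_0} M^k_n$, and since the differential $d^k$ is graded $R[t]$-linear, it restricts to maps $d^k_n\colon M^k_n\to M^{k+1}_n$ that commute with multiplication by $t$. Applying $\underline{\epsilon}$ and then $cr^{-1}$ produces the functor $n\mapsto (M^\bullet_n,d^\bullet_n)\in\mathbf{coCh}(\mathbf{Mod}_R)$, and $\underline{H}^k$ yields the persistence module $n\mapsto H^k(M^\bullet_n)$ whose transition maps are induced by $t\cdot$ (well-defined on cohomology since the differentials are $R[t]$-linear).

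The core algebraic fact I would then invoke is that kernels and images in $\mathbf{grMod}_{R[t]}$ are computed degreewise: $\ker(d^k)_n=\ker(d^k_n)$ and $\text{im}(d^{k-1})_n=\text{im}(d^{k-1}_n)$. This gives a canonical identification $H^k(M^\bullet)_n\cong H^k(M^\bullet_n)$, and the $R[t]$-action on the graded module $H^k(M^\bullet)$ is exactly the one induced from $t\cdot\colon M^k_n\to M^k_{n+1}$ on cocycles modulo coboundaries. Hence applying $\epsilon$ to $H^k(M^\bullet)$ reproduces the persistence module obtained on the other side, with the same transition maps. For naturality, any cochain morphism $\phi^\bullet\colon M^\bullet\to N^\bullet$ is a family of graded $R[t]$-linear maps commuting with $d$, and therefore of degreewise components $\phi^k_n$ commuting with both $d$ and $t$; both composites send $\phi^\bullet$ to the persistence module morphism whose component at $n$ is $H^k(\phi^\bullet_n)$ under the above identification, so the isomorphism is natural in $M^\bullet$.

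The main obstacle is bookkeeping: one has to verify that the canonical degreewise identification of cohomology is not merely a family of isomorphisms of $R$-modules, but respects the ambient structure on both sides (the $R[t]$-action on $H^k(M^\bullet)$ and the persistence transition maps on $\underline{H}^k\circ cr^{-1}\circ\underline{\epsilon}(M^\bullet)$) and is functorial in $M^\bullet$. Once this compatibility is checked, the whole statement reduces to the observation that cohomology in $\mathbf{grMod}_{R[t]}$ and in $\mathbf{Fun}(\mathbb{N}_0,\mathbf{Mod}_R)$ is computed in each grade, respectively each index, by the same kernel-modulo-image prescription, and the second isomorphism follows. The first isomorphism $\eta\circ \underline{H}^k\cong H^k\circ \underline{\eta}\circ cr$ is then obtained by applying $\eta$ to the second and using $\eta\circ\epsilon\cong\text{id}$ together with $\underline{\eta}\circ\underline{\epsilon}\cong\text{id}$ and the fact that $cr$ is an isomorphism of functor categories.
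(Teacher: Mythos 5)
Your proposal is correct and follows essentially the same route as the paper's proof: both decompose the cochain complex of graded modules degreewise, observe that cohomology in $\mathbf{grMod}_{R[t]}$ is computed grade by grade with the $t$-action descending to cohomology, identify the two composites on objects and morphisms, and then deduce the first isomorphism from the second via $\eta\circ\epsilon\cong\text{id}$ and the currying equivalence. Your explicit statement of the key fact that kernels and images in $\mathbf{grMod}_{R[t]}$ are computed degreewise is a useful clarification of what the paper leaves implicit in its notation, but it is not a different argument.
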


\begin{proof}
See Section \ref{702}.
\end{proof}

\section{Persistent sheaf cohomology type A} \label{129}

\subsection{Sheaf persistence modules of algebraic type} \label{551}

We apply the concept of persistence, discussed in the previous section, to the category of sheaves and the sheaf cohomology functor. In the notation of the previous section, we set $\mathbf{A}=\mathbf{Shv}(X,\mathbf{Mod}_R)$, $\vec{S}=\vec{\mathcal{F}}\colon\mathbb{N}_0\rightarrow \mathbf{Shv}(X,\mathbf{Mod}_R)$ and $T=H^k(X,-)\colon\mathbf{Shv}(X,\mathbf{Mod}_R)\rightarrow \mathbf{Mod}_R$ the $k$-th sheaf cohomology functor. In this way, we obtain the persistence module $H^k(X,-)\circ \vec{\mathcal{F}}\in\mathbf{Fun}(\mathbb{N}_0,\mathbf{Mod}_R)$ depicted in the following diagram
\begin{equation} \label{681} 
\begin{tikzcd}
\mathbb{N}_0 \arrow[d,"\vec{\mathcal{F}}"{yshift=3pt}] &[-20pt] 0 \arrow[r] &[20pt] 1 \arrow[r] &[20pt] 2 \arrow[r] &[20pt] \cdots \\
\mathbf{Shv}(X,\mathbf{Mod}_R) \arrow[d,"H^k(X\text{,}-)"] & \mathcal{F}_0 \arrow[r,"\mathcal{F}_0^1"] & \mathcal{F}_1 \arrow[r,"\mathcal{F}_1^2"] & \mathcal{F}_{2} \arrow[r,"\mathcal{F}_2^3"] & \cdots \\
\mathbf{Mod}_R & H^k(X,\mathcal{F}_0) \arrow[r,"H^k(X\text{,}\mathcal{F}_0^1)"] & H^k(X,\mathcal{F}_1) \arrow[r,"H^k(X\text{,}\mathcal{F}_1^2)"] & H^k(X,\mathcal{F}_2) \arrow[r,"H^k(X\text{,}\mathcal{F}_2^3)"] & \cdots
\end{tikzcd} \quad .
\end{equation}  

\noindent
In the linear diagram of sheaves $\vec{\mathcal{F}}$, depicted in the middle row of (\ref{681}), we have variable algebraic information attached to the fixed topological space $X$. Therefore, we call $H^k(X,-)\circ \vec{\mathcal{F}}$ a \emph{sheaf persistence module of algebraic type} or, for brevity, a \emph{persistence module of type A}.

It is straightforward to generalize this construction of persistence modules from diagrams of sheaves to the zigzag (arbitrary orientations of arrows in the indexing category) or the multi-dimensional case. Given a functor $\vec{\mathcal{F}}\colon\mathbb{N}_0^d\rightarrow\mathbf{Shv}(X,\mathbf{Mod}_R)$, composition with the sheaf cohomology functor yields a $d$-dimensional persistence module $H^k(X,-)\circ \vec{\mathcal{F}}\in\mathbf{Fun}(\mathbb{N}_0^d,\mathbf{Mod}_R)$. For the sake of simplicity of presentation we just consider the oriented one-dimensional case here.

If $R=\mathbb{F}$ and if we assume that $H^k(X,-)\circ \vec{\mathcal{F}}\in\mathbf{Fun}(\mathbb{N}_0,\mathbf{vec}_\mathbb{F})$, then, by Theorem \ref{817}, there exists an interval decomposition $H^k(X,-)\circ \vec{\mathcal{F}}\cong\bigoplus_{i\in I} \vec{I}_{[a_i,b_i]}$. An interval module $\vec{I}_{[a_i,b_i]}$ in this decomposition corresponds to a sheaf cohomology class in $H^k(X,-)\circ \vec{\mathcal{F}}$ that is born at index $a_i$ and dies at index $b_i$. The corresponding persistence barcode $\text{BC}\big(H^k(X,-)\circ \vec{\mathcal{F}}\big)$ describes the evolution of all $k$-dimensional sheaf cohomology classes along $\vec{\mathcal{F}}$. In analogy to persistent homology, we call the collection of interval modules corresponding to a sheaf persistence module of algebraic type, represented by $\text{BC}\big(H^k(X,-)\circ \vec{\mathcal{F}}\big)$, the \emph{(type A) persistent sheaf cohomology} of $\vec{\mathcal{F}}$.

For example, in the case $k=0$, the intervals in $\text{BC}\big(H^0(X,-)\circ \vec{\mathcal{F}}\big)$ correspond to generators of global sections in $\vec{\mathcal{F}}$ that are born at some index $a_i$ and persist until they die at index $b_i$. Figure \ref{257} shows an example of a functor $\vec{F}$ (of cellular sheaves) on a $1$-simplex and the corresponding zero-dimensional persistence barcode. Note that we can identify a functor $\vec{F}\in\mathbf{Fun}\big(\mathbf{[n]},\mathbf{Shv}(X,\mathbf{vec}_\mathbb{F})\big)$ on the category corresponding to the poset $[n]=\{0,\ldots,n-1\}$, as depicted in Figure \ref{257}, with a functor $\vec{F}'\in\mathbf{Fun}\big(\mathbb{N}_0,\mathbf{Shv}(X,\mathbf{vec}_\mathbb{F})\big)$ by extending $\vec{F}$ on the right by infinitely many copies of $F_4$ and identity maps. 
\begin{figure} 
\begin{tikzcd} [every label/.append style = {font = \tiny}]
\bullet_{\sigma_1} &[-35pt] \mathbb{F} \arrow[r,"\begin{pmatrix}1\\0\end{pmatrix}"{xshift=5pt}] \arrow[d,"\begin{pmatrix}1\\0\end{pmatrix}"] & \mathbb{F}^2 \arrow[r,"\begin{pmatrix}1\quad 0\\0\quad 1\end{pmatrix}"{xshift=5pt}] \arrow[d,"\begin{pmatrix}1\quad 0\\0\quad 1\end{pmatrix}"] & \mathbb{F}^2 \arrow[r,"\begin{pmatrix}1\quad 0\\0\quad 1\end{pmatrix}"{xshift=5pt}] \arrow[d,"\begin{pmatrix}1\quad 0\\0\quad 1\end{pmatrix}"] & \mathbb{F}^2 \arrow[r,"\begin{pmatrix}1\quad 0\\0\quad 1\end{pmatrix}"{xshift=5pt}] \arrow[d,"\begin{pmatrix}1\quad 0\\0\quad 1\\0\quad 0\end{pmatrix}"] & \mathbb{F}^2 \arrow[d,"\begin{pmatrix}1\quad 0\\0\quad 1\\0\quad 0\end{pmatrix}"] \\
 &[-35pt]  \mathbb{F}^2 \arrow[r,"\begin{pmatrix}1\quad 0\\0\quad 1\end{pmatrix}"{xshift=5pt}] & \mathbb{F}^2 \arrow[r,"\begin{pmatrix}1\quad 0\\0\quad 1\end{pmatrix}"{xshift=5pt}] & \mathbb{F}^2 \arrow[r,"\begin{pmatrix}1\quad 0\\0\quad 1\\0\quad 0\end{pmatrix}"{xshift=5pt}] & \mathbb{F}^3 \arrow[r,"\begin{pmatrix}1\quad 0\quad 0\\0\quad 1\quad 0\\0\quad 0\quad 1\end{pmatrix}"{xshift=5pt}] & \mathbb{F}^3  \\
\bullet_{\sigma_2} \arrow[uu,dash,swap,"\tau"{font=\small},shorten <= -.6em,shorten >= -.7em,thick,shift left=1.7] &[-35pt]  \mathbb{F} \arrow[r,"(1)"{xshift=5pt}] \arrow[u,swap,"\begin{pmatrix}0\\1\end{pmatrix}"] & \mathbb{F} \arrow[r,"\begin{pmatrix}1\\0\end{pmatrix}"{xshift=5pt}] \arrow[u,swap,"\begin{pmatrix}0\\1\end{pmatrix}"] & \mathbb{F}^2 \arrow[r,"\begin{pmatrix}1\quad 0\\0\quad 1\end{pmatrix}"{xshift=5pt}] \arrow[u,swap,"\begin{pmatrix}0\quad 0\\1\quad 0\end{pmatrix}"] & \mathbb{F}^2 \arrow[r,"\begin{pmatrix}1\quad 0\\0\quad 1\\0\quad 0\end{pmatrix}"{xshift=5pt}] \arrow[u,swap,"\begin{pmatrix}0\quad 0\\1\quad 0\\0\quad 0\end{pmatrix}"] & \mathbb{F}^3 \arrow[u,swap,"\begin{pmatrix}0\quad 0\quad 1\\1\quad 0\quad 0\\0\quad 0\quad 0\end{pmatrix}"] \\[-15pt]
X &[-35pt]  F_0 \arrow[r,"F_0^1"] & F_1 \arrow[r,"F_1^2"] & F_2 \arrow[r,"F_2^3"] & F_3 \arrow[r,"F_3^4"] & F_4 \\
&[-35pt]  0 \arrow[r,"0"] & \mathbb{F} \arrow[r,"\begin{pmatrix}1\\0\end{pmatrix}"] & \mathbb{F}^2 \arrow[r,"\begin{pmatrix}1\quad 0\\0\quad 1\end{pmatrix}"] & \mathbb{F}^2 \arrow[r,"\begin{pmatrix}1\quad 0\\0\quad 1\\0\quad 0\end{pmatrix}"] & \mathbb{F}^3 \\[-15pt]
 &[-35pt]  & \bullet \arrow[rrr,dash,shorten <= -.55em,shorten >= -.5em,thick] & & & \bullet \\[-25pt]
 &[-35pt]  & & \bullet \arrow[rr,dash,shorten <= -.55em,shorten >= -.5em,thick] & & \bullet \\[-25pt]
&[-35pt] & & & & \bullet \\[-20pt]
 &[-35pt]  H^0(X,F_0)\arrow[r] & H^0(X,F_1) \arrow[r] &  H^0(X,F_2) \arrow[r] &  H^0(X,F_3) \arrow[r] & H^0(X,F_4) 
\end{tikzcd}
\caption{A functor $\vec{F}\in\mathbf{Fun}\big(\mathbf{[5]},\mathbf{Shv}(X,\mathbf{vec}_\mathbb{F})\big)$ (of cellular sheaves) on a $1$-simplex and the corresponding zero-dimensional persistence barcode.}
\label{257}
\end{figure}

\subsection{Correspondence of linear diagrams of sheaves and sheaves of graded modules} \label{608}

A possible but inefficient method to compute the persistent cohomology of $\vec{\mathcal{F}}\in\mathbf{Fun}\big(\mathbf{[n]},\mathbf{Shv}(X,\mathbf{vec}_\mathbb{F})\big)$ is to compute the cohomology of each sheaf $\mathcal{F}_i$ and every morphism induced in cohomology by $\mathcal{F}_i^{i+1}$ individually and then compute the decomposition of the obtained persistence module. In classical persistence theory this inefficiency is avoided by computing persistent homology from the ordinary homology of a complex of graded modules. This method of computation is based on the correspondence between persistence modules and $\mathbb{N}_0$-graded $\mathbb{F}[t]$-modules (Theorem \ref{960}) and Proposition \ref{635}. We want to establish a similar correspondence theorem for sheaves.

With this goal in mind, we begin by considering presheaves. We define functors that convert linear diagrams of presheaves into presheaves of graded modules and vice versa. We denote by 
\begin{equation*} \label{937}
\begin{tikzcd}
\mathbf{Fun}\big(\mathbb{N}_0,\mathbf{Fun}(\mathbf{Open}(X)^\text{op},\mathbf{Mod}_R)\big) \arrow[r,yshift=3pt,"cr"] & \arrow[l,yshift=-3pt,"cr^{-1}"{xshift=3pt}] \mathbf{Fun}\big(\mathbf{Open}(X)^\text{op},\mathbf{Fun}(\mathbb{N}_0,\mathbf{Mod}_R)\big)
\end{tikzcd}
\end{equation*}
the functors obtained by currying \cite{nlab:currying}, i.e.\ they convert linear diagrams of presheaves into a presheaves of linear diagrams and vice versa. This is a slight abuse of notation because we also denoted the functors in Proposition \ref{635} by $cr$, but it is clear from the context which of them are meant. 

\begin{definition}[Correspondence] \label{492}
Define functors $\mathcal{C}\coloneqq \underline{\eta}\circ cr$ and $\mathcal{E}\coloneqq cr^{-1}\circ \underline{\epsilon}$ as depicted in the following diagram 
\begin{equation*}
\begin{tikzcd}
\mathbf{Fun}\big(\mathbb{N}_0,\mathbf{pShv}(X,\mathbf{Mod}_R)\big) \arrow[r,yshift=3pt,"cr"] \arrow[dr,xshift=-15pt,yshift=-5pt,shorten >= .6em,shorten <= -1em,"\mathcal{C}"] &[10pt] \arrow[l,yshift=-3pt,"cr^{-1}"] \mathbf{pShv}\big(X,\mathbf{Fun}(\mathbb{N}_0,\mathbf{Mod}_R)\big) \arrow[d,xshift=3pt,"\underline{\eta}"] \\[10pt]
& \mathbf{pShv}\big(X,\mathbf{grMod}_{R[t]}) \arrow[u,xshift=-3pt,"\underline{\epsilon}"] \arrow[ul,yshift=-10pt,xshift=-15pt,shorten >= -1.2em,shorten <= .8em,"\mathcal{E}"] 
\end{tikzcd} \quad .
\end{equation*} 
\end{definition} 

\noindent
In the following we explain the action of these functors in more detail. Let $\vec{\mathcal{F}}\in\mathbf{Fun}\big(\mathbb{N}_0,\mathbf{pShv}(X,\mathbf{Mod}_R)\big)$, i.e.\ we have a diagram of presheaves on $X$ of the form: 
\begin{equation*} %\label{662} 
\begin{tikzcd} [column sep=large]
\mathcal{F}_0 \arrow[r,"\mathcal{F}_0^1"] & \mathcal{F}_1 \arrow[r,"\mathcal{F}_1^2"] & \mathcal{F}_2 \arrow[r,"\mathcal{F}_2^3"] & \cdots \quad .
\end{tikzcd}
\end{equation*} 
For every open subset $U\subseteq X$, by applying the section functor $\Gamma(U,-)$ on this diagram, we obtain a diagram of $R$-modules
\begin{equation*} %\label{281} 
\begin{tikzcd} [column sep=large]
\mathcal{F}_0(U) \arrow[r,"(\mathcal{F}_0^1)_U"] & \mathcal{F}_1(U) \arrow[r,"(\mathcal{F}_1^2)_U"] & \mathcal{F}_2(U) \arrow[r,"(\mathcal{F}_2^3)_U"] & \cdots \quad .
\end{tikzcd}
\end{equation*} 
Hence, we have an assignment of a persistence module $\vec{\mathcal{F}}(U)\in\mathbf{Fun}(\mathbb{N}_0,\mathbf{Mod}_R)$ to every open subset $U\subseteq X$. This assignment defines a presheaf of persistence modules $cr(\vec{\mathcal{F}})$. Moreover, by the correspondence of persistence modules and graded $R[t]$-modules, we can associate to $\vec{\mathcal{F}}(U)$ the graded $R[t]$-module
\begin{equation} \label{127}
\eta(\vec{\mathcal{F}}(U))=\bigoplus_{n\in\mathbb{N}_0}\mathcal{F}_n(U)
\end{equation}
where the $t$-multiplication is defined by $t\cdot \coloneqq (\mathcal{F}_n^{n+1})_U\colon \mathcal{F}_n(U)\rightarrow \mathcal{F}_{n+1}(U)$. The assignment $U\mapsto \eta(\vec{\mathcal{F}}(U))$, for all $U\subseteq X$ open, defines $\mathcal{C}(\vec{\mathcal{F}})\in\mathbf{pShv}(X,\mathbf{grMod}_{R[t]})$. In the case of a cellular sheaf on a simplicial complex $X$, we only need to consider the basic open sets corresponding to the simplices of $X$. Consider, for example, the functor $\vec{F}$ in Figure \ref{257}. Here we obtain the persistence modules $\vec{F}(\sigma_1)$, $\vec{F}(\tau)$ and $\vec{F}(\sigma_2)$ given by the first three rows. The presheaf of graded $\mathbb{F}[t]$-modules $\mathcal{C}(\vec{F})$, corresponding to the functor $\vec{F}$, has the following form
\begin{equation} \label{907}
\begin{tikzcd}[column sep=huge,every label/.append style = {font = \tiny}]
 & \mathbb{F}[t]_3 \arrow[d,phantom,"\oplus"] & \mathbb{F}[t]_4 \arrow[d,phantom,"\oplus"] \\[-15pt]
\mathbb{F}[t]_1 \arrow[d,phantom,"\oplus"] \arrow[r,"\begin{pmatrix}1\quad 0\\0\quad t\\0\quad 0 \end{pmatrix}",shift left=-3.5] & \mathbb{F}[t]_0 \arrow[d,phantom,"\oplus"] & \arrow[l,swap,"\begin{pmatrix}0\quad 0\quad t^4\\1\quad 0\quad 0\\0\quad 0\quad 0\end{pmatrix}",shift left=3.5] \mathbb{F}[t]_2 \arrow[d,phantom,"\oplus"] \\[-15pt]
\mathbb{F}[t]_0 & \mathbb{F}[t]_0  & \mathbb{F}[t]_0 \\[-10pt]
_{\sigma_1}\bullet \arrow[rr,dash,shorten <= -.5em,shorten >= -.5em,thick,swap,"\tau"{font=\footnotesize}] & & \bullet_{\sigma_2}
\end{tikzcd}
\end{equation}
where $\mathbb{F}[t]_n$ denotes the graded $\mathbb{F}[t]$-module that is generated by a single generator on the $n$-th level and the matrix representations of the restriction maps are with respect to these generators ordered from bottom to top. 

Conversely, suppose we have a presheaf of graded $R[t]$-modules $M\in\mathbf{pShv}(X,\mathbf{grMod}_{R[t]})$ such that
\begin{equation*}
M(U)=\bigoplus_{n\in\mathbb{N}_0}M_n(U)\text{ }, \quad M(U\xhookrightarrow{} V)=\bigoplus_{n\in\mathbb{N}_0}M_n(U\xhookrightarrow{} V) \quad \forall U\subseteq V \subseteq X \text{ open} \, .
\end{equation*}
Then, applying the functor $\epsilon$ yields the persistence module 
\begin{equation*} %\label{489} 
\begin{tikzcd} [column sep=large]
M_0(U) \arrow[r,"t \cdot"] & M_1(U) \arrow[r,"t \cdot"] & M_2(U) \arrow[r,"t \cdot"] & \cdots
\end{tikzcd}
\end{equation*} 
for every open subset $U\subseteq X$. Hence, the assignment $U\mapsto \epsilon(M(U))$ defines a presheaf of persistence modules $\underline{\epsilon}(M)$. Now, for every $n\in\mathbb{N}_0$, we can construct a presheaf $\mathcal{E}(M)_n\in\mathbf{pShv}(X,\mathbf{Mod}_R)$ and a presheaf morphism $\mathcal{E}(M)_n^{n+1}\in\text{Hom}(\mathcal{E}(M)_n,\mathcal{E}(M)_{n+1})$ by defining 
\begin{equation} \label{330}
\begin{aligned}
& \mathcal{E}(M)_n(U)\coloneqq M_n(U) \text{ } , \quad \mathcal{E}(M)_n(U\xhookrightarrow{}V)\coloneqq M_n(U\xhookrightarrow{}V)  \quad \forall U\subseteq V \subseteq X \text{ open}  \\
& (\mathcal{E}(M)_n^{n+1})_U\coloneqq t \cdot\colon M_n(U)\rightarrow M_{n+1}(U) \quad \qquad \qquad \qquad \qquad \forall U\subseteq X \text{ open} \quad .
\end{aligned}
\end{equation}
In this way we obtain a functor $\mathcal{E}(M)\in\mathbf{Fun}\big(\mathbb{N}_0,\mathbf{pShv}(X,\mathbf{Mod}_R)\big)$.

\begin{proposition} \label{930}
There are natural isomorphisms $\mathcal{C}\circ \mathcal{E}\cong\text{id}$ and $\mathcal{E}\circ \mathcal{C}\cong \text{id}$, i.e.\ there is an equivalence of categories
\begin{equation*} %\label{299}
\mathbf{Fun}\big(\mathbb{N}_0,\mathbf{pShv}(X,\mathbf{Mod}_R)\big)\cong\mathbf{pShv}(X,\mathbf{grMod}_{R[t]}) \quad .
\end{equation*}
\end{proposition}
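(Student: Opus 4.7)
The plan is to factor the claimed equivalence through two independent steps: first, currying, which swaps the roles of the indexing categories $\mathbb{N}_0$ and $\mathbf{Open}(X)^{\mathrm{op}}$; second, a componentwise application of the Zomorodian--Carlsson equivalence from Theorem \ref{960}. Since both factors are equivalences and equivalences compose, the conclusion will follow largely by bookkeeping.

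\textbf{Currying step.} I would first observe that $cr$ and $cr^{-1}$ are in fact strictly mutually inverse isomorphisms of categories, not merely an equivalence. This is a standard manifestation of the cartesian closedness of $\mathbf{Cat}$: for any small categories $\mathbf{I}$, $\mathbf{J}$ and any category $\mathbf{M}$, the canonical bijections
\begin{equation*}
\mathbf{Fun}\big(\mathbf{I},\mathbf{Fun}(\mathbf{J},\mathbf{M})\big)\;\cong\;\mathbf{Fun}(\mathbf{I}\times\mathbf{J},\mathbf{M})\;\cong\;\mathbf{Fun}\big(\mathbf{J},\mathbf{Fun}(\mathbf{I},\mathbf{M})\big)
\end{equation*}
are isomorphisms of categories, and the $cr$, $cr^{-1}$ in Definition \ref{492} are instances. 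Thus $cr^{-1}\circ cr=\mathrm{id}$ and $cr\circ cr^{-1}=\mathrm{id}$ strictly.

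\textbf{Componentwise equivalence step.} From Theorem \ref{960} we have natural isomorphisms $\eta\circ\epsilon\cong\mathrm{id}$ and $\epsilon\circ\eta\cong\mathrm{id}$. The next step is to lift these to the post-composed functors $\underline{\eta}$ and $\underline{\epsilon}$ on presheaf categories, i.e.\ to produce natural isomorphisms $\underline{\eta}\circ\underline{\epsilon}\cong\mathrm{id}$ and $\underline{\epsilon}\circ\underline{\eta}\cong\mathrm{id}$. This is the general fact that whiskering a natural isomorphism $\alpha\colon F\Rightarrow G$ on the right by an arbitrary $H\in\mathbf{Fun}(\mathbf{I},\mathbf{A})$ yields a natural isomorphism $\alpha H\colon F\circ H\Rightarrow G\circ H$, and that these assemble, naturally in $H$, into a natural isomorphism $\underline{\alpha}\colon\underline{F}\Rightarrow\underline{G}$ of post-composition functors. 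Applying this with $\mathbf{I}=\mathbf{Open}(X)^{\mathrm{op}}$ to the unit and counit from Theorem \ref{960} gives exactly the desired isomorphisms.

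\textbf{Putting it together.} With these two pieces in hand, the proposition drops out by unfolding definitions:
\begin{equation*}
\mathcal{C}\circ\mathcal{E}\;=\;\underline{\eta}\circ cr\circ cr^{-1}\circ\underline{\epsilon}\;=\;\underline{\eta}\circ\underline{\epsilon}\;\cong\;\mathrm{id},
\end{equation*}
and symmetrically $\mathcal{E}\circ\mathcal{C}\cong\mathrm{id}$. I do not anticipate any genuine obstacle: the entire argument is formal 2-categorical bookkeeping over the nontrivial input already packaged in Theorem \ref{960}. If anything, the only mildly pedantic point is spelling out the whiskering argument carefully, but this is precisely the same abstract device (applied to a different indexing category) that underlies Proposition \ref{635}, so it can be invoked with confidence.
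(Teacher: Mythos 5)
Your proposal is correct and follows essentially the same route as the paper's proof: both factor $\mathcal{C}$ and $\mathcal{E}$ through the currying isomorphisms and the componentwise (whiskered) Zomorodian--Carlsson equivalence from Theorem \ref{960}, then compose. The only difference is that you spell out the whiskering step more explicitly, which the paper leaves implicit.
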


\begin{proof}
Since $cr$ and $cr^{-1}$ are obtained by currying \cite{nlab:currying}, we have $cr\circ cr^{-1}\cong\text{id}$ and $cr^{-1}\circ cr\cong\text{id}$. By Theorem \ref{960}, we have $\underline{\eta}\circ\underline{\epsilon}\cong\text{id}$ and $\underline{\epsilon}\circ\underline{\eta}\cong\text{id}.$
Therefore, we obtain
\begin{align*} \label{952}
& \mathcal{C}\circ \mathcal{E}=\underline{\eta}\circ cr \circ cr^{-1} \circ \underline{\epsilon}\cong\underline{\eta}\circ\underline{\epsilon}\cong\text{id} \\
& \mathcal{E}\circ \mathcal{C} = cr^{-1}\circ \underline{\epsilon}\circ\underline{\eta}\circ cr\cong cr^{-1}\circ cr\cong \text{id} \quad .  \qedhere
\end{align*} 
\end{proof}

\noindent
If $R$ is a notherian ring, we aim for a result analogous to the correspondence of persistence modules of finite type and finitely generated graded modules. To establish such a result we adapt the definition of functors $\mathbb{N}_0\rightarrow \mathbf{Mod}_R$ of finite type to functors $\mathbb{N}_0\rightarrow\mathbf{pShv}(X,\mathbf{Mod}_R)$.

\begin{definition}[Finite type] \label{473}
A functor $\vec{\mathcal{F}}\in\mathbf{Fun}\big(\mathbb{N}_0,\mathbf{pShv}(X,\mathbf{Mod}_R)\big)$ is of \emph{weak finite type} if $\mathcal{F}_i$ is a presheaf of finitely generated $R$-modules for every $i\in\mathbb{N}_0$ and if for every open $U\subseteq X$ there exists an $m_U\in\mathbb{N}_0$ such that $(\mathcal{F}_i^{i+1})_U$ is an isomorphism for all $i\geq m_U$. 

The functor $\vec{\mathcal{F}}$ is of \emph{finite type} if $\text{sup}\{m_U\text{ }|\text{ }U\subseteq X \text{ open}\}<\infty$, i.e.\ there exists an $m\in\mathbb{N}_0$ such that $\mathcal{F}_i^{i+1}$ is an isomorphism for all $i\geq m$. 
\end{definition}

\noindent
Note that a functor $\vec{\mathcal{F}}\in\mathbf{Fun}\big(\mathbb{N}_0,\mathbf{pShv}(X,\mathbf{Mod}_R)\big)$ of weak finite type corresponds to a presheaf with values in the category of persistence modules of finite type. 

\begin{proposition} \label{454}
Let $X$ be a topological space and $R$ a noetherian ring. The subcategory of $\mathbf{Fun}\big(\mathbb{N}_0,\mathbf{pShv}(X,\mathbf{Mod}_R)\big)$ of functors of weak finite type is equivalent to the category \\ $\mathbf{pShv}(X,\mathbf{grmod}_{R[t]})$ of presheaves of finitely generated graded $R[t]$-modules on $X$.
\end{proposition}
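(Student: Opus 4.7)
The plan is to show that the equivalence established in Proposition \ref{930} restricts to the claimed subcategories. Since both subcategories in question are full subcategories of their ambient categories, once I verify that the functors $\mathcal{C}=\underline{\eta}\circ cr$ and $\mathcal{E}=cr^{-1}\circ\underline{\epsilon}$ send objects of one subcategory to objects of the other, the natural isomorphisms $\mathcal{C}\circ\mathcal{E}\cong\text{id}$ and $\mathcal{E}\circ\mathcal{C}\cong\text{id}$ from Proposition \ref{930} restrict automatically. Thus the proof reduces to two object-level checks, each of which is performed separately for each open set $U\subseteq X$.

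For the forward direction, I would take $\vec{\mathcal{F}}$ of weak finite type and show $\mathcal{C}(\vec{\mathcal{F}})\in\mathbf{pShv}(X,\mathbf{grmod}_{R[t]})$. For each open $U\subseteq X$, the persistence module $cr(\vec{\mathcal{F}})(U)$ is given in degree $i$ by the finitely generated $R$-module $\mathcal{F}_i(U)$, with transition maps $(\mathcal{F}_i^{i+1})_U$ that are isomorphisms for $i\geq m_U$. This is precisely a persistence module of finite type in the sense of Theorem \ref{960}, so the restricted equivalence in that theorem yields that $\mathcal{C}(\vec{\mathcal{F}})(U)=\eta\bigl(cr(\vec{\mathcal{F}})(U)\bigr)$ is a finitely generated graded $R[t]$-module for every $U$.

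For the reverse direction, I would take $M\in\mathbf{pShv}(X,\mathbf{grmod}_{R[t]})$ and verify $\mathcal{E}(M)$ is of weak finite type. Fixing $U$, the finitely generated graded $R[t]$-module $M(U)$ corresponds under Theorem \ref{960} to a persistence module of finite type. Unfolding this yields that each graded piece $\mathcal{E}(M)_n(U)=M_n(U)$ is a finitely generated $R$-module, and that there exists $m_U\in\mathbb{N}_0$ with $(\mathcal{E}(M)_n^{n+1})_U=t\cdot$ an isomorphism for all $n\geq m_U$. The first condition gives that each $\mathcal{E}(M)_n$ is a presheaf of finitely generated $R$-modules; the second is precisely the pointwise stabilization required by Definition \ref{473}.

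The main point to keep an eye on is why only \emph{weak} finite type appears on the functor side: because the check via Theorem \ref{960} is applied independently at each $U$, the stabilization index $m_U$ can vary with $U$ with no a priori uniform bound, which is exactly the distinction Definition \ref{473} draws between weak finite type and finite type. The noetherian hypothesis on $R$ enters precisely through the restricted equivalence in Theorem \ref{960}, which relies on the fact that a finitely generated graded $R[t]$-module over a noetherian $R$ has finitely generated graded components. No genuine obstacle arises beyond carefully transcribing the pointwise content of Theorem \ref{960} into the presheaf language.
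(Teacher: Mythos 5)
Your proposal is correct and follows essentially the same route as the paper, which simply cites Definition \ref{473}, Proposition \ref{930} and Theorem \ref{960}; you have filled in exactly the intended details, namely that the equivalence of Proposition \ref{930} restricts because $\mathcal{C}$ and $\mathcal{E}$ preserve the (full) subcategories, which is checked pointwise at each open $U$ via the restricted equivalence of Theorem \ref{960}. Your remarks on where the noetherian hypothesis enters and on why only weak finite type can be expected match the paper's surrounding discussion.
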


\begin{proof}
This follows from Definition \ref{473}, Theorem \ref{930} and Theorem \ref{960}. 
\end{proof}

\noindent
Since every functor $\vec{\mathcal{F}}\in\mathbf{Fun}\big(\mathbb{N}_0,\mathbf{pShv}(X,\mathbf{Mod}_R)\big)$ of finite type is also of weak finite type, every functor of finite type corresponds to a presheaf of finitely generated graded modules but the converse is not true in general. Given $M\in\mathbf{pShv}(X,\mathbf{grmod}_{R[t]})$, for every open $U\subseteq X$ we obtain an $m_U\in\mathbb{N}_0$ such that $(\mathcal{E}(M)_i^{i+1})_U$ is an isomorphism for all $i\geq m_U$. However, the set $\{m_U\text{ }|\text{ }U\subseteq X\text{ open}\}$ might be unbounded, i.e.\ there is no $m\in\mathbb{N}_0$ such that $(\mathcal{E}(M)_i^{i+1})_U$ is an isomorphism for all $i\geq m$ and all $U\subseteq X$ open. In the special case of a finite topological space, for example a finite simplicial complex with the Alexandrov topology, the number of open sets is finite, hence setting $m:=\text{max}\{m_U\text{ }|\text{ }U\subseteq X\text{ open}\}$ we obtain that $\mathcal{E}(M)$ is of finite type. Thus, in this case, the subcategory of $\mathbf{Fun}\big(\mathbb{N}_0,\mathbf{pShv}(X,\mathbf{Mod}_R)\big)$ of functors of finite type is equivalent to the category $\mathbf{pShv}(X,\mathbf{grmod}_{R[t]})$. 

Next we show that the correspondence of Proposition \ref{930} carries over to sheaves.

\begin{proposition} \label{282}
The functors $\mathcal{C}$ and $\mathcal{E}$ restrict to equivalences
\begin{equation*} \label{147}
\begin{aligned}
& \mathcal{C}\colon\mathbf{Fun}\big(\mathbb{N}_0,\mathbf{Shv}(X,\mathbf{Mod}_R)\big)\rightarrow \mathbf{Shv}(X,\mathbf{grMod}_{R[t]}) \\
& \mathcal{E}\colon\mathbf{Shv}(X,\mathbf{grMod}_{R[t]})\rightarrow \mathbf{Fun}\big(\mathbb{N}_0,\mathbf{Shv}(X,\mathbf{Mod}_R)\big)
\end{aligned}
\end{equation*} 
on the respective subcategories.
\end{proposition}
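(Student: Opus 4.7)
The plan is to invoke Proposition \ref{930} and check only that both functors preserve the sheaf condition on their respective sides; the natural isomorphisms $\mathcal{C}\circ\mathcal{E}\cong\mathrm{id}$ and $\mathcal{E}\circ\mathcal{C}\cong\mathrm{id}$ then restrict automatically to the subcategories. The one technical point to isolate is that in $\mathbf{grMod}_{R[t]}$ small products and exactness are computed degreewise: for any family $(M^{(i)})_{i\in I}$ of graded $R[t]$-modules one has $\bigl(\prod_{i\in I}M^{(i)}\bigr)_n=\prod_{i\in I}M^{(i)}_n$, and a sequence in $\mathbf{grMod}_{R[t]}$ is exact if and only if each of its graded components is exact in $\mathbf{Mod}_R$.

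First I would check that $\mathcal{C}$ sends sheaves to sheaves. Let $\vec{\mathcal{F}}\in\mathbf{Fun}\bigl(\mathbb{N}_0,\mathbf{Shv}(X,\mathbf{Mod}_R)\bigr)$, fix an open $U\subseteq X$ and an open cover $\mathcal{U}=(U_i)_{i\in I}$ of $U$. Using the observation above, the degree-$n$ component of the candidate sheaf sequence
\begin{equation*}
0\to\bigoplus_n\mathcal{F}_n(U)\to\prod_{i\in I}\bigoplus_n\mathcal{F}_n(U_i)\to\prod_{i,j\in I}\bigoplus_n\mathcal{F}_n(U_i\cap U_j)
\end{equation*}
for $\mathcal{C}(\vec{\mathcal{F}})$ in $\mathbf{grMod}_{R[t]}$ is exactly the sheaf sequence for $\mathcal{F}_n$ in $\mathbf{Mod}_R$, which is exact because $\mathcal{F}_n$ is a sheaf. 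Thus $\mathcal{C}(\vec{\mathcal{F}})$ is a sheaf of graded $R[t]$-modules.

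Conversely, if $M\in\mathbf{Shv}(X,\mathbf{grMod}_{R[t]})$ then, for any open cover, the sheaf sequence for $M$ is exact in $\mathbf{grMod}_{R[t]}$, so extracting the degree-$n$ component yields an exact sequence in $\mathbf{Mod}_R$; since $\mathcal{E}(M)_n=M_n$ by (\ref{330}), each presheaf $\mathcal{E}(M)_n$ satisfies the sheaf axiom and hence $\mathcal{E}(M)\in\mathbf{Fun}\bigl(\mathbb{N}_0,\mathbf{Shv}(X,\mathbf{Mod}_R)\bigr)$.

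The main obstacle — really the only conceptual step — is distinguishing the product of the underlying $R$-modules of a family of graded modules, which is $\prod_{i}\bigoplus_n M^{(i)}_n$, from the product taken in $\mathbf{grMod}_{R[t]}$, which is $\bigoplus_n\prod_{i}M^{(i)}_n$ and is formed degreewise. This distinction is precisely why the familiar failure ``an infinite direct sum of sheaves of $R$-modules need not be a sheaf'' does not occur here: the formal sum $\bigoplus_n\mathcal{F}_n(U)$ merely encodes the grading and plays no role in the verification of the sheaf axiom inside $\mathbf{grMod}_{R[t]}$.
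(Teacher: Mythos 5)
Your proof is correct and follows essentially the same route as the paper: both arguments rest on the fact that products in $\mathbf{grMod}_{R[t]}$ are formed degreewise and that exactness of a sequence of graded modules is checked degreewise, so the sheaf axiom for $\mathcal{C}(\vec{\mathcal{F}})$ reduces to the sheaf axiom for each $\mathcal{F}_n$ and conversely for $\mathcal{E}(M)_m$. The paper writes this out by assembling the exact sequences for the $\mathcal{F}_n$ into a direct sum via the commutative ladder (\ref{277}), but the content is identical to your degree-by-degree verification.
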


\begin{proof}
\textbf{1)} Let $\vec{\mathcal{F}}\in\mathbf{Fun}\big(\mathbb{N}_0,\mathbf{Shv}(X,\mathbf{Mod}_R)\big)$, we have to show that $\mathcal{C}(\vec{\mathcal{F}})\in\mathbf{Shv}(X,\mathbf{grMod}_{R[t]})$, i.e.\ given an open set $U\subseteq X$ and an open cover $\mathcal{U}=(U_i)_{i\in I}$ of $U$ we have to show that 
\begin{equation*} \label{425}
\begin{tikzcd}
0 \arrow[r] & \mathcal{C}(\vec{\mathcal{F}})(U) \arrow[r,"\gamma"] & \underset{i\in I}{\prod} \mathcal{C}(\vec{\mathcal{F}})(U_i)  \arrow[r,"\alpha-\beta"] &[40pt] \underset{i,j\in I}{\prod}  \mathcal{C}(\vec{\mathcal{F}})(U_i\cap U_j) 
\end{tikzcd}
\end{equation*} 
is exact. By construction, $\mathcal{C}(\vec{\mathcal{F}})(U)=\underset{n\in\mathbb{N}_0}{\bigoplus}\mathcal{F}_n(U)$ and, by the definition of the product in the category of graded modules, $\underset{i\in I}{\prod}\mathcal{C}(\vec{\mathcal{F}})(U_i)\cong\underset{n\in\mathbb{N}_0}{\bigoplus}\underset{i\in I}{\prod}\mathcal{F}_n(U_i)$ \cite[page 20]{gradedrings}. Since $\vec{\mathcal{F}}$ is a sheaf-valued functor, for every $n\in \mathbb{N}_0$, we obtain the following commutative diagram of $R$-modules
\begin{equation} \label{277}
\begin{tikzcd}[column sep=large]
0 \arrow[r] & \mathcal{F}_n(U) \arrow[r,"\gamma_n"] \arrow[d] & \underset{i\in I}{\prod} \mathcal{F}_n(U_i)  \arrow[r,"\alpha_n-\beta_n"] \arrow[d] &[40pt] \underset{i,j\in I}{\prod}  \mathcal{F}_n(U_i\cap U_j) \arrow[d] \\
0 \arrow[r] & \mathcal{F}_{n+1}(U) \arrow[r,"\gamma_{n+1}"] & \underset{i\in I}{\prod} \mathcal{F}_{n+1}(U_i) \arrow[r,"\alpha_{n+1}-\beta_{n+1}"] & \underset{i,j\in I}{\prod}  \mathcal{F}_{n+1}(U_i\cap U_j)
\end{tikzcd}
\end{equation} 
where the vertical arrows are the morphisms induced by $\mathcal{F}_n^{n+1}$ and the rows are exact. By taking the direct sum over all modules and horizontal arrows and by the commutativity of the squares in (\ref{277}), we obtain the following exact sequence of graded $R[t]$-modules
\begin{equation*} \label{373}
\begin{tikzcd}[column sep=large]
0 \arrow[r] & \underset{n\in\mathbb{N}_0}{\bigoplus}\mathcal{F}_n(U) \arrow[r,"\underset{n\in\mathbb{N}_0}{\bigoplus}\gamma_n\circ p_n"] &[10pt] \underset{n\in\mathbb{N}_0}{\bigoplus}\underset{i\in I}{\prod} \mathcal{F}_n(U_i) \arrow[r,"\underset{n\in\mathbb{N}_0}{\bigoplus}(\alpha_n-\beta_n)\circ p_n"] &[40pt] \underset{n\in\mathbb{N}_0}{\bigoplus} \underset{i,j\in I}{\prod}  \mathcal{F}_n(U_i\cap U_j)
\end{tikzcd}
\end{equation*}  
where $\gamma=\underset{n\in\mathbb{N}_0}{\bigoplus}\gamma_n\circ p_n$ and $\alpha-\beta=\underset{n\in\mathbb{N}_0}{\bigoplus}(\alpha_n-\beta_n)\circ p_n$. Hence, $\mathcal{C}(\vec{\mathcal{F}})$ is a sheaf of graded modules.

\textbf{2)} Let $M\in\mathbf{Shv}(X,\mathbf{grMod}_{R[t]})$, we have to show that $\mathcal{E}(M)\in \mathbf{Fun}\big(\mathbb{N}_0,\mathbf{Shv}(X,\mathbf{Mod}_R)\big)$, i.e.\ given an open set $U\subseteq X$ and an open cover $\mathcal{U}=(U_i)_{i\in I}$ of $U$ we have to show that
\begin{equation*} 
\begin{tikzcd}[column sep=large]
0 \arrow[r] & \mathcal{E}(M)_m(U) \arrow[r,"\gamma_m"] & \underset{i\in I}{\prod} \mathcal{E}(M)_m(U_i)  \arrow[r,"\alpha_m-\beta_m"] &[40pt] \underset{i,j\in I}{\prod}  \mathcal{E}(M)_m(U_i\cap U_j) 
\end{tikzcd}
\end{equation*} 
is exact, for every $m\in\mathbb{N}_0$. By construction, if $M(U)=\underset{n\in\mathbb{N}_0}{\bigoplus}M_n(U)$, then $\mathcal{E}(M)_m(U)=M_m(U)$. By the definition of the product of graded modules, $\underset{i\in I}{\prod}M(U_i)\cong\underset{n\in\mathbb{N}_0}{\bigoplus}\underset{i\in I}{\prod}M_n(U_i)$. Since $M$ is a sheaf, we obtain the following exact sequence 
\begin{equation*} \label{837}
\begin{tikzcd}[column sep=large]
0 \arrow[r] & \underset{n\in\mathbb{N}_0}{\bigoplus}M_n(U) \arrow[r,"\underset{n\in\mathbb{N}_0}{\bigoplus}\gamma_n\circ p_n"] &[10pt] \underset{n\in\mathbb{N}_0}{\bigoplus}\underset{i\in I}{\prod} M_n(U_i) \arrow[r,"\underset{n\in\mathbb{N}_0}{\bigoplus}(\alpha_n-\beta_n)\circ p_n"] &[40pt] \underset{n\in\mathbb{N}_0}{\bigoplus} \underset{i,j\in I}{\prod}  M_n(U_i\cap U_j)
\end{tikzcd}
\end{equation*}
of graded $R[t]$-modules and graded morphisms. Hence, for every $m\in\mathbb{N}_0$, we obtain the following exact sequence of $R$-modules
\begin{equation*} \label{356}
\begin{tikzcd}[column sep=large]
0 \arrow[r] & M_m(U) \arrow[r,"\gamma_m"] & \underset{i\in I}{\prod} M_m(U_i) \arrow[r,"\alpha_m-\beta_m"] &[40pt] \underset{i,j\in I}{\prod}  M_m(U_i\cap U_j)
\end{tikzcd}
\end{equation*}
and this implies that $\mathcal{E}(M)_m$ is a sheaf. 
\end{proof}

\noindent
By combining all the previous results, we obtain the following theorem characterizing functors $\vec{\mathcal{F}}\colon\mathbb{N}_0\rightarrow \mathbf{Shv}(X,\mathbf{Mod}_R)$.

\begin{theorem} \label{697}
Let $X$ be a topological space. Then there is an equivalence of categories 
\begin{equation*} %\label{149}
\mathbf{Fun}\big(\mathbb{N}_0,\mathbf{Shv}(X,\mathbf{Mod}_R)\big)\cong \mathbf{Shv}(X,\mathbf{grMod}_{R[t]}) \quad .
\end{equation*}
Moreover, if $R$ is a noetherian ring and $X$ is finite, then the subcategory of $\mathbf{Fun}\big(\mathbb{N}_0,\mathbf{Shv}(X,\mathbf{Mod}_R)\big)$ of functors of finite type is equivalent to $\mathbf{Shv}(X,\mathbf{grmod}_{R[t]})$.
\end{theorem}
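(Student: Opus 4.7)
The plan is to assemble Theorem \ref{697} from the ingredients already in place, doing essentially no new work beyond combining Proposition \ref{930}, Proposition \ref{282}, and (for the second statement) Theorem \ref{960} together with the discussion following Proposition \ref{454}.

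For the first statement, I would argue as follows. By Proposition \ref{282}, the functors $\mathcal{C}$ and $\mathcal{E}$ restrict to functors between $\mathbf{Fun}\big(\mathbb{N}_0,\mathbf{Shv}(X,\mathbf{Mod}_R)\big)$ and $\mathbf{Shv}(X,\mathbf{grMod}_{R[t]})$. By Proposition \ref{930}, the natural isomorphisms $\mathcal{C}\circ\mathcal{E}\cong\mathrm{id}$ and $\mathcal{E}\circ\mathcal{C}\cong\mathrm{id}$ hold on the full presheaf categories, so they hold in particular after restriction to the sheaf subcategories. This immediately yields the desired equivalence.

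For the second statement, the strategy is to cut down the equivalence further to the finite type / finitely generated subcategories. First I would invoke the observation made just after Proposition \ref{454}: when $X$ is finite, the collection $\{m_U\mid U\subseteq X\text{ open}\}$ is finite, so any functor $\vec{\mathcal{F}}\in\mathbf{Fun}\big(\mathbb{N}_0,\mathbf{pShv}(X,\mathbf{Mod}_R)\big)$ of weak finite type is automatically of finite type, and conversely finite type trivially implies weak finite type. Hence over a finite $X$ the two notions coincide. Then Proposition \ref{454}, together with Proposition \ref{282}, restricts the equivalence $\mathcal{C},\mathcal{E}$ to an equivalence between the finite type subcategory of $\mathbf{Fun}\big(\mathbb{N}_0,\mathbf{Shv}(X,\mathbf{Mod}_R)\big)$ and $\mathbf{Shv}(X,\mathbf{grmod}_{R[t]})$, where the noetherian hypothesis on $R$ ensures via Theorem \ref{960} that pointwise finite generation of the sectionwise graded module is equivalent to pointwise finite type of the corresponding persistence module.

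The only genuinely delicate point, and the one to state carefully in the write-up, is the sectionwise check that finite generation of the sheaf $\mathcal{C}(\vec{\mathcal{F}})$ as an object of $\mathbf{Shv}(X,\mathbf{grmod}_{R[t]})$ matches finite type for $\vec{\mathcal{F}}$. This is not really an obstacle, since over a finite space ``sheaf of finitely generated graded modules'' is evaluated on the finitely many opens and the uniform bound on $m_U$ coincides with uniform finite generation, but it is the one place where the finiteness of $X$ is actually used, so I would spell it out explicitly rather than waving at it.
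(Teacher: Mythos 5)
Your proposal is correct and follows essentially the same route as the paper, whose proof is precisely the combination of Proposition \ref{930}, Proposition \ref{454} and Proposition \ref{282}, with the finiteness of $X$ entering exactly where you place it (collapsing weak finite type to finite type via the bound on $\{m_U\}$). Your more explicit unpacking of the sectionwise finite-generation check is a reasonable elaboration of what the paper leaves implicit, but it is not a different argument.
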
 

\begin{proof}
This follows from Proposition \ref{930}, Proposition \ref{454} and Proposition \ref{282}. 
\end{proof}

\noindent
Theorem \ref{697} generalizes Theorem \ref{960}: If $X=\{p\}$ is a one-point space, then a sheaf with values in $\mathbf{M}$ on $X$ can be identified with an object in $\mathbf{M}$, hence $\mathbf{Shv}(X,\mathbf{M})\cong\mathbf{M}$. \newline

\subsection{Correspondence of sheaf persistence modules of algebraic type with the cohomology of sheaves of graded modules} \label{819}

The significance of Theorem \ref{697} is that it allows us to compute persistent sheaf cohomology in two different ways.

Let $\vec{\mathcal{F}}\in\mathbf{Fun}\big(\mathbb{N}_0,\mathbf{Shv}(X,\mathbf{vec}_\mathbb{F})\big)$ be a functor of finite type, where $X$ is a finite topological space. One way to compute persistent sheaf cohomology is to first apply the (pointwise) sheaf cohomology functor $\underline{H}^k(X,-)$ to $\vec{\mathcal{F}}$ :
\begin{equation*} %\label{101} 
\begin{tikzcd}[column sep=huge]
H^k(X,\mathcal{F}_0) \arrow[r,"H^k(X\text{,}\mathcal{F}_0^1)"] &[5pt] H^k(X,\mathcal{F}_1) \arrow[r,"H^k(X\text{,}\mathcal{F}_1^2)"] &[5pt] H^k(X,\mathcal{F}_2) \arrow[r,"H^k(X\text{,}\mathcal{F}_2^3)"] &[5pt] \cdots 
\end{tikzcd}
\end{equation*} 
then compute the graded $\mathbb{F}[t]$-module $\eta(\underline{H}^k(X,\vec{\mathcal{F}}))=\bigoplus_{n\in\mathbb{N}_0} H^k(X,\mathcal{F}_n)$,
with $t$-multiplication defined by $t \cdot\coloneqq H^k(X,\mathcal{F}_n^{n+1})\colon H^k(X,\mathcal{F}_n)\rightarrow H^k(X,\mathcal{F}_{n+1})$ for all $n\in\mathbb{N}_0$, corresponding to the obtained persistence module, and finally compute the interval decomposition of $\underline{H}^k(X,\vec{\mathcal{F}})$ from the direct sum decomposition of $\eta(\underline{H}^k(X,\vec{\mathcal{F}}))$ into indecomposable modules. 

The results of Section \ref{608} allow for the following alternative approach. Let $\mathcal{C}(\vec{\mathcal{F}})\in \mathbf{Shv}(X,\mathbf{grmod}_{\mathbb{F}[t]})$ be the sheaf of finitely generated graded modules corresponding to $\vec{\mathcal{F}}$. Then we can compute $H^k(X,\mathcal{C}(\vec{\mathcal{F}}))$, the ordinary sheaf cohomology  of $\mathcal{C}(\vec{\mathcal{F}})$. 

Consider again the example of $\vec{F}$ and $\underline{H}^0(X,\vec{F})$ in Figure \ref{257}. The graded $\mathbb{F}[t]$-module corresponding to $\underline{H}^0(X,\vec{F})$ is isomorphic to $\mathbb{F}[t]_1\oplus\mathbb{F}[t]_2\oplus \mathbb{F}[t]_4$. From this graded module, which is already written as a direct sum of indecomposables, we can read of the barcode depicted in Figure \ref{257}. On the other hand, the zero-dimensional cohomology of the sheaf of graded modules $\mathcal{C}(\vec{F})$ in (\ref{907}) is given by
\begin{equation*} %\label{504} 
H^0(X,\mathcal{C}(\vec{F}))\cong\text{ker}\begin{blockarray}{cccccc}
 & 0 & 1 & 0 & 2 & 4  \\
\begin{block}{c(ccccc)}
0 & -1 & 0 & 0 & 0 & t^4  \\
0 & 0 & -t & 1 & 0 & 0  \\
3 & 0 & 0 & 0 & 0 & 0 \\
\end{block}
\end{blockarray}\cong\text{ker}\begin{blockarray}{cccccc}
 & 0 & 1 & 0 & 2 & 4  \\
\begin{block}{c(ccccc)}
0 & -1 & 0 & 0 & 0 & 0  \\
0 & 0 & 0 & 1 & 0 & 0  \\
3 & 0 & 0 & 0 & 0 & 0 \\
\end{block}
\end{blockarray}\cong \mathbb{F}[t]_1\oplus\mathbb{F}[t]_2\oplus \mathbb{F}[t]_4
\end{equation*}
where the numbers before the rows and over the columns are the degrees of the corresponding basis elements in $C^0(X,\mathcal{C}(\vec{F}))$ and $C^1(X,\mathcal{C}(\vec{F}))$, respectively. In this example, both methods of computation discussed above yield the same result, i.e.\ $\bigoplus_{n\in\mathbb{N}_0} H^0(X,F_n)\cong H^0(X,\mathcal{C}(\vec{F}))$. Hence, the ordinary sheaf cohomology of the sheaf of graded modules $\mathcal{C}(\vec{F})$ computes the persistent sheaf cohomology of $\vec{F}$. The following results show that this holds in general. 

\begin{proposition} \label{238}
Let $\mathbf{A}$ be an abelian category. If $\vec{I}\in\mathbf{Fun}(\mathbb{N}_0,\mathbf{A})$ is injective, then $I_n$ is injective in $\mathbf{A}$ for all $n\in\mathbb{N}_0$.  
\end{proposition}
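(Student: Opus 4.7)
The plan is to exhibit the evaluation functor $E_n \colon \mathbf{Fun}(\mathbb{N}_0,\mathbf{A}) \to \mathbf{A}$, $\vec{D} \mapsto D_n$, as the right adjoint of an exact functor and then invoke the standard fact that right adjoints of exact left adjoints preserve injectives.

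First I would construct the left adjoint $L_n \colon \mathbf{A} \to \mathbf{Fun}(\mathbb{N}_0, \mathbf{A})$ to $E_n$. Given $A \in \mathbf{A}$, define
\[
L_n(A)_m \coloneqq \begin{cases} 0 & m < n \\ A & m \geq n \end{cases}, \qquad L_n(A)_m^{m+1} \coloneqq \begin{cases} 0 & m < n \\ \mathrm{id}_A & m \geq n \end{cases}
\]
with the obvious action on morphisms. A natural transformation $\phi \colon L_n(A) \to \vec{G}$ is trivial below level $n$ and at level $n$ is a morphism $\phi_n \colon A \to G_n$; the commutativity conditions force $\phi_{n+k} = G_n^{n+k} \circ \phi_n$, so $\phi$ is determined by $\phi_n$. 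This gives the natural bijection $\mathrm{Hom}(L_n(A), \vec{G}) \cong \mathrm{Hom}(A, G_n) = \mathrm{Hom}(A, E_n(\vec{G}))$, exhibiting $L_n \dashv E_n$.

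Next I would verify that $L_n$ is exact. Since $\mathbf{A}$ is abelian, the functor category $\mathbf{Fun}(\mathbb{N}_0, \mathbf{A})$ is also abelian with exactness computed pointwise. For an exact sequence $0 \to A \to B \to C \to 0$ in $\mathbf{A}$, applying $L_n$ produces the sequence which at every level $m < n$ is $0 \to 0 \to 0 \to 0 \to 0$ and at every level $m \geq n$ is $0 \to A \to B \to C \to 0$; both are exact, so $L_n$ preserves exactness.

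Finally, the adjunction $L_n \dashv E_n$ yields the natural isomorphism $\mathrm{Hom}(-, E_n(\vec{I})) \cong \mathrm{Hom}(L_n(-), \vec{I})$. If $\vec{I}$ is injective in $\mathbf{Fun}(\mathbb{N}_0, \mathbf{A})$, then $\mathrm{Hom}(-, \vec{I})$ is exact, and composing with the exact functor $L_n$ keeps it exact. Thus $\mathrm{Hom}(-, I_n) = \mathrm{Hom}(-, E_n(\vec{I}))$ is exact, i.e.\ $I_n$ is injective in $\mathbf{A}$ for every $n \in \mathbb{N}_0$. The main obstacle is nothing deep — it is just verifying cleanly that $L_n$ really is left adjoint to $E_n$ and handling the edge case $m < n$ when confirming pointwise exactness; the rest is a standard adjunction argument.
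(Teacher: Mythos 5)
Your proof is correct, and it takes a genuinely different route from the paper's. The paper argues directly from the lifting definition of injectivity: given a monomorphism $f\colon A\rightarrow B$ and a map $g\colon A\rightarrow I_n$, it builds by hand a monomorphism in $\mathbf{Fun}(\mathbb{N}_0,\mathbf{A})$ extending $f$ via an iterated pushout construction (using that pushouts of monomorphisms are monomorphisms in an abelian category), applies the injectivity of $\vec{I}$ in the functor category, and reads off the component at level $n$. You instead package the whole argument into the standard lemma that a right adjoint whose left adjoint is exact preserves injectives, after identifying the evaluation functor $E_n$ as the right adjoint of the functor $L_n$ you construct --- which is precisely the left Kan extension along the inclusion $\{n\}\hookrightarrow\mathbb{N}_0$, and is exact because exactness in the functor category is pointwise and $L_n$ is pointwise either $0$ or the identity. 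Your adjunction computation is right: a natural transformation out of $L_n(A)$ is forced to be $\phi_{n+k}=G_n^{n+k}\circ\phi_n$ above level $n$ and zero below, so it is freely determined by $\phi_n$. Your approach is more conceptual and generalizes immediately to functors out of any poset (or any small category where the relevant left Kan extension exists and is exact), while the paper's pushout argument is more elementary and self-contained, avoiding any appeal to adjoint functors. Both are complete proofs.
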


\begin{proof}
Note that $\mathbf{Fun}(\mathbb{N}_0,\mathbf{A})$ is abelian as a category of functors to an abelian category \cite[Page 25]{weibel}. Consider the following diagram in $\mathbf{A}$ 
\begin{equation} \label{179}
\begin{tikzcd}
0 \arrow[r] & A \arrow[r,"f"] \arrow[d,swap,"g"] & B \\
&  I_n 
\end{tikzcd}
\end{equation}
where the top row is exact. Inductively extend (\ref{179}) to the diagram of solid arrows in $\mathbf{Fun}(\mathbb{N}_0,\mathbf{A})$ as depicted in (\ref{136})
\begin{equation} \label{136}
\begin{tikzcd}[row sep=small,column sep=large]
& & & 0 \arrow[r] & I_{n+1} \arrow[r,"\alpha_{n+1}"] \arrow[d] & P_{n+1} \arrow[dl,dashed] \\[3pt]
& & & & I_{n+2} \arrow[ddl,<-] \\[-10pt]
& & 0 \arrow[r] & I_n \arrow[r,"\alpha_n",crossing over] \arrow[d] \arrow[uur,"I_n^{n+1}"] & P_n \arrow[dl,dashed] \arrow[uur,swap,"\beta_{n+1}"] \\[3pt]
& & & I_{n+1} \arrow[ddl,<-] \\[-10pt]
& 0 \arrow[r] & A \arrow[r,crossing over,"f"{xshift=-2pt}] \arrow[d] \arrow[uur,"g"] & B \arrow[uur,swap,"\beta_n"] \arrow[dl,dashed]  \\[3pt] 
& & I_n \arrow[ddl,<-] \\[-10pt]
0 \arrow[r] & 0 \arrow[r,crossing over] \arrow[d] \arrow[uur] & 0 \arrow[uur] \arrow[dl,dashed]  \\[3pt]
& I_{n-1} 
\end{tikzcd}
\end{equation}
where $(P_n,\alpha_n,\beta_n)$ is the pushout of $(A,f,g)$, $(P_{n+1},\alpha_{n+1},\beta_{n+1})$ is the pushout of $(I_n,\alpha_n,I_n^{n+1})$ and so forth. For a pushout diagram in an abelian category we have the following result: If $f$ is a monomorphism, then $\alpha_n$ is a monomorphism \cite[Lemma 12.5.13]{pushout}. Therefore, the horizontal rows are exact. Since $\vec{I}$ is injective in $\mathbf{Fun}(\mathbb{N}_0,\mathbf{A})$ there exists a morphism, consisting of the dashed arrows in (\ref{136}), making everything commute. Hence, we can extend $f$ in (\ref{179}) and $I_n$ is injective.
\end{proof}

\begin{theorem} \label{133}
The following diagram commutes
\begin{equation*} \label{614}
\begin{tikzcd}
\mathbf{Shv}(X,\mathbf{grMod}_{R[t]}) \arrow[r,shift left=1,"\mathcal{E}"] \arrow[d,swap,"H^k(X\text{,}-)"] &[10pt] \arrow[l,shift left=1,"\mathcal{C}"] \mathbf{Fun}\big(\mathbb{N}_0,\mathbf{Shv}(X,\mathbf{Mod}_R)\big) \arrow[d,"\underline{H}^k(X\text{,}-)"] \\
\mathbf{grMod}_{R[t]} \arrow[r,shift left=1,"\epsilon"] & \arrow[l,shift left=1,"\eta"] \mathbf{Fun}(\mathbb{N}_0,\mathbf{Mod}_R)
\end{tikzcd} 
\end{equation*}
i.e.\ there are natural isomorphisms $\underline{H}^k(X,-)\circ \mathcal{E}\cong \epsilon\circ H^k(X,-)$ and $\eta\circ\underline{H}^k(X,-)\cong H^k(X,-)\circ \mathcal{C}$.
\end{theorem}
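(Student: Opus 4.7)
The strategy is to lift an underived commutation of the global-section functors with the equivalences $\mathcal{C}, \mathcal{E}$ to the derived functors, using that $\mathcal{C}, \mathcal{E}$ form an equivalence of abelian categories (Theorem~\ref{697}) together with the fact, supplied by Proposition~\ref{238}, that injectives in the functor category $\mathbf{Fun}(\mathbb{N}_0, \mathbf{Shv}(X,\mathbf{Mod}_R))$ are pointwise injective. The first step is an underived identity that is essentially a definition chase: for $M \in \mathbf{Shv}(X,\mathbf{grMod}_{R[t]})$ with $M(U) = \bigoplus_n M_n(U)$, by Definitions~\ref{492} and~\ref{813} one has $\mathcal{E}(M)_n(U) = M_n(U)$ with transition $t \cdot$, so at the level of global sections
\[
\underline{\Gamma}(X,-) \circ \mathcal{E} \;\cong\; \epsilon \circ \Gamma(X,-)
\]
as functors $\mathbf{Shv}(X,\mathbf{grMod}_{R[t]}) \to \mathbf{Fun}(\mathbb{N}_0, \mathbf{Mod}_R)$.

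Next I would choose an injective resolution $0 \to M \to I^\bullet$ in $\mathbf{Shv}(X,\mathbf{grMod}_{R[t]})$. Since $\mathcal{E}$ is half of an equivalence of abelian categories, it is exact and preserves injective objects (the latter follows from the characterization of injectivity by a lifting property, which is preserved under any equivalence), so $0 \to \mathcal{E}(M) \to \mathcal{E}(I^\bullet)$ is an injective resolution in $\mathbf{Fun}(\mathbb{N}_0, \mathbf{Shv}(X, \mathbf{Mod}_R))$. Proposition~\ref{238} then guarantees that each $\mathcal{E}(I^j)_n$ is injective in $\mathbf{Shv}(X,\mathbf{Mod}_R)$, so for every $n$ the complex $\mathcal{E}(I^\bullet)_n$ is an injective resolution of $\mathcal{E}(M)_n$ and hence computes $H^k(X,\mathcal{E}(M)_n)$. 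In other words, $\underline{H}^k(X,\mathcal{E}(M))$ is the pointwise cohomology of the cochain complex $\underline{\Gamma}(X, \mathcal{E}(I^\bullet))$ in $\mathbf{Fun}(\mathbb{N}_0, \mathbf{Mod}_R)$.

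Applying the underived identity pointwise along $I^\bullet$ gives $\underline{\Gamma}(X,\mathcal{E}(I^\bullet)) \cong \underline{\epsilon}(\Gamma(X,I^\bullet))$ in $\mathbf{coCh}(\mathbf{Fun}(\mathbb{N}_0,\mathbf{Mod}_R))$. Invoking Proposition~\ref{635}, specifically the natural isomorphism $\underline{H}^k \circ cr^{-1} \circ \underline{\epsilon} \cong \epsilon \circ H^k$, and taking cohomology of both sides yields
\[
\underline{H}^k(X, \mathcal{E}(M)) \;\cong\; \epsilon\big(H^k(\Gamma(X, I^\bullet))\big) \;=\; \epsilon\big(H^k(X, M)\big),
\]
which is the first asserted natural isomorphism. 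The second, $\eta \circ \underline{H}^k(X,-) \cong H^k(X,-) \circ \mathcal{C}$, then follows by substituting $\mathcal{C}(\vec{\mathcal{F}})$ for $M$, postcomposing with $\eta$, and using $\mathcal{E} \circ \mathcal{C} \cong \mathrm{id}$ and $\eta \circ \epsilon \cong \mathrm{id}$ from Theorems~\ref{697} and~\ref{960}. The main delicate point in this plan is the passage from the derived functor of $\underline{\Gamma}(X,-)$ to pointwise sheaf cohomology, which is exactly what Proposition~\ref{238} is designed to enable; once this reduction is justified, the rest of the argument is a transport of structure along the equivalence.
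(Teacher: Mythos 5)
Your proposal is correct and follows essentially the same route as the paper: commute the global-section functors with the equivalence at the underived level, transport an injective resolution along $\mathcal{E}$, use Proposition~\ref{238} to see that the resulting resolution is pointwise injective (so its rows compute pointwise sheaf cohomology), and then apply Proposition~\ref{635} to identify the cohomology of the resulting complex of graded modules with the pointwise cohomology of the corresponding diagram of complexes. The only cosmetic difference is that the paper carries a morphism $\phi\colon M\rightarrow N$ through the whole construction to make the naturality of the isomorphism explicit, whereas you state it for a single object; this is a routine addition.
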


\begin{proof}
By construction of the functors $\mathcal{C}$ and $\mathcal{E}$, the following diagram commutes
\begin{equation} \label{739}
\begin{tikzcd}
\mathbf{Shv}(X,\mathbf{grMod}_{R[t]}) \arrow[r,shift left=1,"\mathcal{E}"] \arrow[d,swap,"\Gamma(X\text{,}-)"] &[10pt] \arrow[l,shift left=1,"\mathcal{C}"] \mathbf{Fun}\big(\mathbb{N}_0,\mathbf{Shv}(X,\mathbf{Mod}_R)\big) \arrow[d,"\underline{\Gamma}(X\text{,}-)"] \\
\mathbf{grMod}_{R[t]} \arrow[r,shift left=1,"\epsilon"] & \arrow[l,shift left=1,"\eta"] \mathbf{Fun}(\mathbb{N}_0,\mathbf{Mod}_R)
\end{tikzcd} \quad .
\end{equation}
By currying, we obtain
\begin{equation*} \label{748}
\begin{tikzcd}\mathbf{coCh}\Big(\mathbf{Fun}\big(\mathbb{N}_0,\mathbf{Shv}(X,\mathbf{Mod}_R)\big)\Big)\arrow[r,shift left=1,"cr^{-1}"] & \arrow[l,shift left=1,"cr"] \mathbf{Fun}\Big(\mathbb{N}_0,\mathbf{coCh}\big(\mathbf{Shv}(X,\mathbf{Mod}_R)\big)\Big)
\end{tikzcd} \quad .
\end{equation*}
By the commutativity of (\ref{739}) and by Proposition \ref{635}, we obtain the following commutative diagram where we overload the notation of $cr$ and $\underline{\Gamma}$ 
\begin{equation} \label{570}
\begin{tikzcd}
\mathbf{coCh}\big(\mathbf{Shv}(X,\mathbf{grMod}_{R[t]})\big) \arrow[r,shift left=1,"cr^{-1}\circ \underline{\mathcal{E}}"] \arrow[d,swap,"\underline{\Gamma}(X\text{,}-)"] &[20pt] \arrow[l,shift left=1,"\underline{\mathcal{C}}\circ cr"] \mathbf{Fun}\Big(\mathbb{N}_0,\mathbf{coCh}\big(\mathbf{Shv}(X,\mathbf{Mod}_R)\big)\Big) \arrow[d,"\underline{\Gamma}(X\text{,}-)"] \\
\mathbf{coCh}\big(\mathbf{grMod}_{R[t]}\big) \arrow[r,shift left=1,"cr^{-1}\circ \underline{\epsilon}"] \arrow[d,swap,"H^k"] & \arrow[l,shift left=1,"\underline{\eta}\circ cr"] \mathbf{Fun}(\mathbb{N}_0,\mathbf{coCh}\big(\mathbf{Mod}_R)\big) \arrow[d,"\underline{H}^k"] \\
\mathbf{grMod}_{R[t]} \arrow[r,shift left=1,"\epsilon"] & \arrow[l,shift left=1,"\eta"] \mathbf{Fun}(\mathbb{N}_0,\mathbf{Mod}_R) 
\end{tikzcd} \quad .
\end{equation}
Let $\phi\colon M\rightarrow N$ be a morphism in $\mathbf{Shv}(X,\mathbf{grMod}_{R[t]})$ and $I^\bullet$ and $J^\bullet$ injective resolutions of $M$ and $N$, respectively. By Theorem \ref{171}, we obtain the following morphisms
\begin{equation*} \label{596}
\begin{tikzcd}
0 \arrow[r] & M \arrow[r] \arrow[d,"\phi"] & I^\bullet \arrow[d,"g^\bullet"] & & 0 \arrow[r] & \mathcal{E}(M) \arrow[r] \arrow[d,"\mathcal{E}(\phi)"] & \underline{\mathcal{E}}(I^\bullet) \arrow[d,"\underline{\mathcal{E}}(g^\bullet)"] \\
0 \arrow[r] & N \arrow[r] & J^\bullet & , & 0 \arrow[r] & \mathcal{E}(N) \arrow[r] & \underline{\mathcal{E}}(J^\bullet)
\end{tikzcd}
\end{equation*}
in $\mathbf{coCh}\big(\mathbf{Shv}(X,\mathbf{grMod}_{R[t]})\big)$ and $\mathbf{coCh}\Big(\mathbf{Fun}\big(\mathbb{N}_0,\mathbf{Shv}(X,\mathbf{Mod}_{R})\big)\Big)$, respectively. Since $\mathcal{E}$ is an equivalence of categories, $\underline{\mathcal{E}}(I^\bullet)$ and $\underline{\mathcal{E}}(J^\bullet)$ are injective resolutions of $\mathcal{E}(M)$ and $\mathcal{E}(N)$, respectively. Expanding the diagram on the right yields
\begin{equation} \label{259}
\begin{tikzcd}[column sep=tiny,row sep=small]
& &[-2pt] &[-2pt] &[-2pt] & \iddots &&&  \iddots &&&  \iddots &[-3pt] &[-3pt] \\
& 0 \arrow[rrr] &&& \mathcal{E}(M)_1 \arrow[rrr] \arrow[ddd] \arrow[ur] &&& \mathcal{E}(I^0)_1 \arrow[rrr] \arrow[ddd] \arrow[ur] &&& \mathcal{E}(I^1)_1 \arrow[rrr] \arrow[ddd] \arrow[ur] &&& \cdots \\
0 \arrow[rrr] &&& \mathcal{E}(M)_0 \arrow[rrr,crossing over] \arrow[ur] &&& \mathcal{E}(I^0)_0 \arrow[rrr,crossing over] \arrow[ur] &&& \mathcal{E}(I^1)_0 \arrow[rrr,crossing over] \arrow[ur] &&& \cdots \\[-25pt]
&&&&& \iddots &&&  \iddots &&&  \iddots \\
& 0 \arrow[rrr] &&& \mathcal{E}(N)_1 \arrow[rrr] \arrow[ur] &&& \mathcal{E}(J^0)_1 \arrow[rrr] \arrow[ur] &&& \mathcal{E}(J^1)_1 \arrow[rrr] \arrow[ur] &&& \cdots \\
0 \arrow[rrr] &&& \mathcal{E}(N)_0 \arrow[rrr] \arrow[ur] \arrow[uuu,<-,crossing over] &&& \mathcal{E}(J^0)_0 \arrow[rrr] \arrow[ur] \arrow[uuu,<-,crossing over] &&& \mathcal{E}(J^1)_0 \arrow[rrr] \arrow[ur] \arrow[uuu,<-,crossing over] &&& \cdots
\end{tikzcd} \quad .
\end{equation}
Since $\underline{\mathcal{E}}(I^\bullet)^l,\underline{\mathcal{E}}(J^\bullet)^l\in\mathbf{Fun}\big(\mathbb{N}_0,\mathbf{Shv}(X,\mathbf{Mod}_R)\big)$, represented by the linear diagrams going backwards in (\ref{259}), are injective, Proposition \ref{238} implies that $\mathcal{E}(I^l)_m$ and $\mathcal{E}(J^l)_m$ are injective for all $l,m\in\mathbb{N}_0$. Hence, for all $m\in\mathbb{N}_0$, $(cr^{-1}\circ\underline{\mathcal{E}})(I^\bullet)_m$ and $(cr^{-1}\circ\underline{\mathcal{E}})(J^\bullet)_m$, represented by the rows of (\ref{259}), are injective resolutions of $\mathcal{E}(M)_m$ and $\mathcal{E}(N)_m$, respectively. By the definition of $H^k(X,-)$ up to natural isomorphism (left- and rightmost square) and by the commutativity of (\ref{570}) (middle square), we obtain the following commutative diagram
\begin{equation*} \label{838}
\begin{tikzcd}[column sep=small]
\epsilon\big(H^k(X,M)\big) \arrow[r,"\cong"] \arrow[d,swap,"\epsilon(H^k(X\text{,}\phi))"] & \big(\epsilon\circ H^k\circ\underline{\Gamma}(X,-)\big)(I^\bullet) \arrow[d,"(\epsilon\circ H^k\circ\underline{\Gamma}(X\text{,}-))(g^\bullet)"] \arrow[r,"\cong"] & \big(\underline{H}^k\circ \underline{\Gamma}(X,-)\circ cr^{-1} \circ \underline{\mathcal{E}}\big)(I^\bullet) \arrow[d,"(\underline{H}^k\circ \underline{\Gamma}(X\text{,}-)\circ cr^{-1} \circ \underline{\mathcal{E}})(g^\bullet)"] \arrow[r,"\cong"] & \underline{H}^k\big(X,\mathcal{E}(M)\big) \arrow[d,"\underline{H}^k(X\text{,}\mathcal{E}(\phi))"] \\
\epsilon\big(H^k(X,N)\big) \arrow[r,"\cong"] & \big(\epsilon\circ H^k\circ\underline{\Gamma}(X,-)\big)(J^\bullet)  \arrow[r,"\cong"] & \big(\underline{H}^k\circ \underline{\Gamma}(X,-)\circ cr^{-1} \circ \underline{\mathcal{E}}\big)(J^\bullet) \arrow[r,"\cong"] & \underline{H}^k\big(X,\mathcal{E}(N)\big)
\end{tikzcd}
\end{equation*} 
and therefore $\epsilon\circ H^k(X,-)\cong \underline{H}^k(X,-)\circ \mathcal{E}$. Moreover, since $\mathcal{E}\circ \mathcal{C}\cong \text{id}$ and $\eta\circ \epsilon\cong \text{id}$, we also obtain $\eta\circ\underline{H}^k(X,-)\cong H^k(X,-)\circ \mathcal{C}$.
\end{proof}

\noindent
Theorem \ref{133} implies that for all $\vec{\mathcal{F}}\in\mathbf{Fun}\big(\mathbb{N}_0,\mathbf{Shv}(X,\mathbf{Mod}_R)\big)$ we have $\underline{H}^k(X,\vec{\mathcal{F}})\cong \epsilon\big(H^k\big(X,\mathcal{C}(\vec{\mathcal{F}})\big)\big)$, i.e.\ sheaf persistence modules of algebraic type correspond to the cohomology of sheaves of graded modules. In many cases sheaf cohomology agrees with \v{C}ech cohomology, which is often more convenient to use for practical computations. The following theorem states that the cohomological correspondence also holds in the setting of presheaves and \v{C}ech cohomology.

\begin{theorem} \label{285}
The following diagram commutes
\begin{equation*} \label{799}
\begin{tikzcd}
\mathbf{pShv}(X,\mathbf{grMod}_{R[t]}) \arrow[r,"\mathcal{E}",shift left=3pt] \arrow[d,swap,"\check{H}^k(X\text{,}-)"] &[15pt] \arrow[l,"\mathcal{C}",shift left=3pt] \mathbf{Fun}\big(\mathbb{N}_0,\mathbf{pShv}(X,\mathbf{Mod}_R)\big) \arrow[d,"\underline{\check{H}}^k(X\text{,}-)"] \\
\mathbf{grMod}_{R[t]} \arrow[r,"\epsilon",shift left=3pt] & \arrow[l,"\eta",shift left=3pt] \mathbf{Fun}(\mathbb{N}_0,\mathbf{Mod}_R)
\end{tikzcd} 
\end{equation*} 
i.e.\ there are natural isomorphisms $\epsilon\circ \check{H}^k(X,-)\cong \underline{\check{H}}^k(X,-)\circ \mathcal{E}$ and $\check{H}^k(X,-)\circ \mathcal{C}\cong \eta\circ \underline{\check{H}}^k(X,-)$.
\end{theorem}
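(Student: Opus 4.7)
My plan is to follow the same overall strategy as the proof of Theorem~\ref{133}, but with the advantage that for \v{C}ech cohomology no injective resolutions are needed. The cohomology is computed directly from the \v{C}ech cochain complex, so the argument reduces to a bookkeeping exercise about how products and direct sums in $\mathbf{grMod}_{R[t]}$ interact with the \v{C}ech construction, followed by a colimit step.

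First I would show that, for every fixed open cover $\mathcal{U}=(U_i)_{i\in I}$ of $X$, the \v{C}ech cochain functor commutes with the correspondence $\mathcal{C}/\mathcal{E}$ in the appropriate curried sense. The key observation is that products in $\mathbf{grMod}_{R[t]}$ are computed componentwise, so for $M\in\mathbf{pShv}(X,\mathbf{grMod}_{R[t]})$ with $M(U)=\bigoplus_{n\in\mathbb{N}_0}M_n(U)$ the $k$-th \v{C}ech cochain group splits as
\[
\check{C}^k(\mathcal{U},M)\;=\;\prod_{(i_0,\ldots,i_k)}\bigoplus_{n\in\mathbb{N}_0}M_n(U_{i_0\cdots i_k})\;\cong\;\bigoplus_{n\in\mathbb{N}_0}\check{C}^k(\mathcal{U},\mathcal{E}(M)_n),
\]
with both the \v{C}ech coboundary and the $t$-action respecting this decomposition. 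After currying, this identifies $\check{C}^\bullet(\mathcal{U},M)\in\mathbf{coCh}(\mathbf{grMod}_{R[t]})$ with the image under $\underline{\mathcal{C}}\circ cr$ of the linear diagram of cochain complexes $\underline{\check{C}}^\bullet(\mathcal{U},\mathcal{E}(M))$, and symmetrically starting from a linear diagram of presheaves $\vec{\mathcal{F}}$.

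Next I would pass to cohomology at the level of a fixed cover. Since $\epsilon$ and $\eta$ form an equivalence of categories (Theorem~\ref{960}), they are exact; combining this with Proposition~\ref{635} applied to the complex $\check{C}^\bullet(\mathcal{U},M)$, together with the identification from the previous step, yields natural isomorphisms $\epsilon\circ\check{H}^k(\mathcal{U},-)\cong\underline{\check{H}}^k(\mathcal{U},-)\circ\mathcal{E}$ and $\check{H}^k(\mathcal{U},-)\circ\mathcal{C}\cong\eta\circ\underline{\check{H}}^k(\mathcal{U},-)$.

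Finally I would promote these to the global statement by taking the filtered colimit over the directed system of refinements of open covers that defines $\check{H}^k(X,-)$. As equivalences of categories (Proposition~\ref{930}), $\epsilon$ and $\eta$ preserve all colimits, so the colimit can be interchanged with them, producing the two claimed natural isomorphisms. The main obstacle I anticipate is the very first step: carefully checking that the $\prod/\bigoplus$ interchange is natural in both the cover and the presheaf, and that it intertwines the \v{C}ech differentials and $t$-actions simultaneously under currying; everything after that is formal.
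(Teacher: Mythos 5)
Your proposal is correct and follows essentially the same route as the paper's proof: identify $C^k(\mathcal{U},\mathcal{C}(\vec{\mathcal{F}}))\cong\bigoplus_{n}C^k(\mathcal{U},\mathcal{F}_n)$ via the degreewise product in $\mathbf{grMod}_{R[t]}$, check that the coboundaries and $t$-action respect this splitting, pass to cohomology, and then take the colimit over covers using that $\eta$, $\epsilon$, $\mathcal{C}$, $\mathcal{E}$ are equivalences and hence preserve colimits. The only cosmetic difference is that you delegate the cohomology step to Proposition~\ref{635}, whereas the paper carries out that computation inline.
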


\begin{proof}
Let $\mathcal{U}$ be an open cover of $X$ and $\phi\colon \vec{\mathcal{F}}\rightarrow \vec{\mathcal{G}}$ be the following morphism in $\mathbf{Fun}\big(\mathbb{N}_0,\mathbf{pShv}(X,\mathbf{Mod}_R)\big)$ 
\begin{equation*} \label{470}
\begin{tikzcd}
\mathcal{F}_0 \arrow[r] \arrow[d,"\phi_0"] & \mathcal{F}_1 \arrow[r] \arrow[d,"\phi_1"] & \mathcal{F}_2 \arrow[r] \arrow[d,"\phi_2"] & \cdots \\
\mathcal{G}_0 \arrow[r] & \mathcal{G}_1 \arrow[r] & \mathcal{G}_2 \arrow[r] & \cdots 
\end{tikzcd} \quad .
\end{equation*} 
Applying $\underline{\check{H}}^k(\mathcal{U},-)$ yields the morphism
\begin{equation*} \label{513}
\begin{tikzcd}
\check{H}^k(\mathcal{U},\mathcal{F}_0) \arrow[r] \arrow[d,"\check{H}^k(\mathcal{U}\text{,}\phi_0)"] & \check{H}^k(\mathcal{U},\mathcal{F}_1) \arrow[r] \arrow[d,"\check{H}^k(\mathcal{U}\text{,}\phi_1)"] & \check{H}^k(\mathcal{U},\mathcal{F}_2) \arrow[r] \arrow[d,"\check{H}^k(\mathcal{U}\text{,}\phi_2)"] & \cdots \\
\check{H}^k(\mathcal{U},\mathcal{G}_0) \arrow[r] & \check{H}^k(\mathcal{U},\mathcal{G}_1) \arrow[r] & \check{H}^k(\mathcal{U},\mathcal{G}_2) \arrow[r] & \cdots 
\end{tikzcd} 
\end{equation*} 
in $\mathbf{Fun}(\mathbb{N}_0,\mathbf{Mod}_R)$  and applying $\eta$ yields the morphism
\begin{equation*} \label{810}
\begin{tikzcd}
\eta\big(\underline{\check{H}}^k(\mathcal{U},\phi)\big)\colon &[-35pt] \eta\big(\underline{\check{H}}^k(\mathcal{U},\vec{\mathcal{F}})\big) \arrow[r] & \eta\big(\underline{\check{H}}^k(\mathcal{U},\vec{\mathcal{G}})\big) \\[-25pt]
\verteq & \verteq & \verteq \\[-25pt]
\underset{n\in\mathbb{N}_0}{\bigoplus}\check{H}^k(\mathcal{U},\phi_n)\circ p_n\colon &[-35pt] \underset{n\in\mathbb{N}_0}{\bigoplus}\check{H}^k(\mathcal{U},\mathcal{F}_n) \arrow[r] & \underset{n\in\mathbb{N}_0}{\bigoplus}\check{H}^k(\mathcal{U},\mathcal{G}_n)
\end{tikzcd}
\end{equation*} 
in $\mathbf{grMod}_{R[t]}$ where the $t$-multiplication is given by $t \cdot\coloneqq\check{H}^k(\mathcal{U},\mathcal{F}_n^{n+1})\colon\check{H}^k(\mathcal{U},\mathcal{F}_n)\rightarrow \check{H}^k(\mathcal{U},\mathcal{F}_{n+1})$. On the other hand, we obtain the following graded $R[t]$-modules of \v{C}ech cochains 
\begin{equation*} \label{494}
C^k\big(\mathcal{U},\mathcal{C}(\vec{\mathcal{F}})\big)=\underset{\sigma\in N_\mathcal{U}^k}{\prod}\mathcal{C}(\vec{\mathcal{F}})(U_{\sigma})\cong\underset{n\in\mathbb{N}_0}{\bigoplus}\underset{\sigma\in N_\mathcal{U}^k}{\prod}\mathcal{F}_n(U_{\sigma})=\underset{n\in\mathbb{N}_0}{\bigoplus}C^k(\mathcal{U},\mathcal{F}_n)
\end{equation*}
where we use the definition of the product in the category of graded modules \cite[page 20]{gradedrings} and where $t \cdot\coloneqq\underset{\sigma\in N_\mathcal{U}^k}{\prod}(\mathcal{F}_n^{n+1})_{U_{\sigma}}\circ p_\sigma \colon C^k(\mathcal{U},\mathcal{F}_n)\rightarrow C^k(\mathcal{U},\mathcal{F}_{n+1})$. Moreover, we obtain the graded boundary morphisms
\begin{align*} \label{318}
\delta^k&=\underset{\sigma\in N_\mathcal{U}^{k+1}}{\prod}\Big(\sum_{j=0}^{k+1}(-1)^j\mathcal{C}(\vec{\mathcal{F}})(U_{\sigma}\xhookrightarrow{} U_{\partial_j\sigma})\circ p_{\partial_j\sigma}\Big) \\
&=\underset{\sigma\in N_\mathcal{U}^{k+1}}{\prod}\Big(\sum_{j=0}^{k+1}(-1)^j\underset{n\in\mathbb{N}_0}{\bigoplus}\big(\mathcal{F}_n(U_{\sigma}\xhookrightarrow{} U_{\partial_j\sigma})\circ p^n_{\partial_j\sigma}\circ p_n\big)\Big) \\
&=\underset{\sigma\in N_\mathcal{U}^{k+1}}{\prod} \underset{n\in\mathbb{N}_0}{\bigoplus} \Big(\sum_{j=0}^{k+1}(-1)^j\mathcal{F}_n(U_{\sigma}\xhookrightarrow{} U_{\partial_j\sigma})\circ p^n_{\partial_j\sigma}\circ p_n\Big) \\
&= \underset{n\in\mathbb{N}_0}{\bigoplus}\underset{\sigma\in N_\mathcal{U}^{k+1}}{\prod} \Big(\sum_{j=0}^{k+1}(-1)^j\mathcal{F}_n(U_{\sigma}\xhookrightarrow{} U_{\partial_j\sigma})\circ p^n_{\partial_j\sigma}\Big)\circ p_n = \underset{n\in\mathbb{N}_0}{\bigoplus} \delta_n^k\circ p_n \quad.
\end{align*}
Hence, the \v{C}ech cohomology of $\mathcal{C}(\vec{\mathcal{F}})$ with respect to $\mathcal{U}$ is given by
\begin{equation*} \label{317}
\check{H}^k\big(\mathcal{U},\mathcal{C}(\vec{\mathcal{F}})\big)=H^k\big(C^\bullet(\mathcal{U},\mathcal{C}(\vec{\mathcal{F}}))\big)\cong \underset{n\in\mathbb{N}_0}{\bigoplus}H^k\big(C^\bullet(\mathcal{U},\mathcal{F}_n)\big)=\underset{n\in\mathbb{N}_0}{\bigoplus}\check{H}^k(\mathcal{U},\mathcal{F}_n)
\end{equation*}
where $t \cdot\coloneqq\check{H}^k\big(\mathcal{U},\mathcal{F}_n^{n+1}\big) \colon \check{H}^k(\mathcal{U},\mathcal{F}_n)\rightarrow \check{H}^k(\mathcal{U},\mathcal{F}_{n+1})$. Analogously one can show that 
\begin{equation*} \label{478}
\check{H}^k\big(\mathcal{U},\mathcal{C}(\phi)\big)=\underset{n\in\mathbb{N}_0}{\bigoplus}\check{H}^k(\mathcal{U},\phi_n)\circ p_n \quad .
\end{equation*}
This implies the following commutative diagram
\begin{equation*} \label{605}
\begin{tikzcd}
\eta\big(\underline{\check{H}}^k(\mathcal{U},\vec{\mathcal{F}})\big) \arrow[r,"\cong"] \arrow[d,swap,"\eta\big(\underline{\check{H}}^k(\mathcal{U}\text{,}\phi)\big)"] & \check{H}^k\big(\mathcal{U},\mathcal{C}(\vec{\mathcal{F}})\big) \arrow[d,"\check{H}^k\big(\mathcal{U}\text{,}\mathcal{C}(\phi)\big)"] \\
\eta\big(\underline{\check{H}}^k(\mathcal{U},\vec{\mathcal{G}})\big) \arrow[r,"\cong"] & \check{H}^k\big(\mathcal{U},\mathcal{C}(\vec{\mathcal{G}})\big)
\end{tikzcd} 
\end{equation*} 
and therefore $\eta\circ\underline{\check{H}}^k(\mathcal{U},-)\cong\check{H}^k(\mathcal{U},-)\circ \mathcal{C}$. By using $\check{H}^k(X,-)\cong \underset{\mathcal{U}}{\text{colim }}\check{H}^k(\mathcal{U},-)$ and the universal property of the colimit, one can see that $\underline{\check{H}}^k(X,\vec{\mathcal{F}})\cong \underset{\mathcal{U}}{\text{colim }}\underline{\check{H}}^k(\mathcal{U},\vec{\mathcal{F}})$, $\underline{\check{H}}^k(X,\phi)=\big(\underset{\mathcal{U}}{\text{colim }}\underline{\check{H}}^k(\mathcal{U},\vec{\mathcal{F}})\rightarrow\underset{\mathcal{U}}{\text{colim }}\underline{\check{H}}^k(\mathcal{U},\vec{\mathcal{G}})\big)$ and moreover $\underline{\check{H}}^k(X,-)\cong \underset{\mathcal{U}}{\text{colim }}\underline{\check{H}}^k(\mathcal{U},-)$. Since $\mathcal{C}$ and $\eta$ are equivalences, we obtain
\begin{align*} \label{412}
\eta\circ\underline{\check{H}}^k(X,-)&\cong \eta\circ \big(\underset{\mathcal{U}}{\text{colim }}\underline{\check{H}}^k(\mathcal{U},-)\big)\cong\underset{\mathcal{U}}{\text{colim }}\big(\eta\circ\underline{\check{H}}^k(\mathcal{U},-)\big) \\
& \cong \underset{\mathcal{U}}{\text{colim }}\big(\check{H}^k(\mathcal{U},-)\circ \mathcal{C}\big) \cong \big(\underset{\mathcal{U}}{\text{colim }}\check{H}^k(\mathcal{U},-)\big)\circ \mathcal{C} \cong \check{H}^k(X,-)\circ \mathcal{C} \quad .
\end{align*}
Since $\epsilon\circ\eta\cong\text{id}$ and $\mathcal{C}\circ \mathcal{E}\cong\text{id}$, we also obtain $\underline{\check{H}}^k(X,-)\circ \mathcal{E}\cong \epsilon\circ \check{H}^k(X,-)$.
\end{proof}

\noindent
A large class of spaces where sheaf and \v{C}ech cohomology agree are the paracompact Hausdorff spaces \cite[Theorem 13.17]{cohomology}. As we show in Section \ref{233}, another class of such spaces are abstract simplicial complexes with the Alexandrov topology. In the case of a simplicial complexes $X$ equipped with the Alexandrov topology, there exists a (not necessarily unique) finest open cover $\mathcal{U}$ of $X$. This implies that $\check{H}^k(X,-)\cong \check{H}^k(\mathcal{U},-)$. For all spaces $X$ that satisfy $H^k(X,-)\cong\check{H}^k(X,-)\cong \check{H}^k(\mathcal{U},-)$ for some finite open cover $\mathcal{U}=(U_i)_{i\in I}$, by Theorem \ref{285}, we have $\eta\big(\underline{H}^k(X,\vec{\mathcal{F}})\big)\cong \eta\big(\underline{\check{H}}^k(\mathcal{U},\vec{\mathcal{F}})\big)\cong \check{H}^k\big(\mathcal{U},\mathcal{C}(\vec{\mathcal{F}})\big)$, for all $\vec{\mathcal{F}}\in\mathbf{Fun}\big(\mathbb{N}_0,\mathbf{Shv}(X,\mathbf{vec}_\mathbb{F})\big)$. Moreover, the interval decomposition of $\underline{H}^k(X,\vec{\mathcal{F}})$ can be computed from the decomposition of $\eta\big(\underline{H}^k(X,\vec{\mathcal{F}})\big)$. Therefore, Theorem \ref{285} suggests the following algorithm to compute the persistent sheaf cohomology of a filtration of sheaves. \newline

\noindent
\textbf{Algorithm:}
Given a filtration $\vec{\mathcal{F}}\colon\mathbf{[n]}\rightarrow \mathbf{Shv}(X,\mathbf{vec}_\mathbb{F})$ of sheaves on $X$, i.e.\ $\mathcal{F}_m^{m+1}$ is an inclusion or, more generally, a monomorphism for all $0\leq m< n-1$, and a finite open cover $\mathcal{U}=(U_i)_{i\in I}$ of $X$ such that $H^k(X,-)\cong\check{H}^k(\mathcal{U},-)$. The information necessary to compute $\check{H}^k\big(\mathcal{U},\mathcal{C}(\vec{\mathcal{F}})\big)$ can be represented by a finite collection of matrices $\mathcal{F}_m(U_{\sigma}\xhookrightarrow{}U_{\partial_j\sigma})$ and $(\mathcal{F}_m^{m+1})_{U_{\sigma}}$ for all $\sigma\in N_\mathcal{U}^k$, $0\leq k<|I|$ and $0\leq m\leq n-1$. Given this input we proceed in the following way:

\begin{enumerate}
\item Construct the sheaf of graded modules $\mathcal{C}(\vec{\mathcal{F}})$.
\item Construct the \v{C}ech cochain complex $C^k\big(\mathcal{U},\mathcal{C}(\vec{\mathcal{F}})\big)$. 
\item Compute the \v{C}ech cohomology of $\mathcal{C}(\vec{\mathcal{F}})$ by (graded) matrix reduction.
\item Compute the persistence barcode from $\check{H}^k\big(X,\mathcal{C}(\vec{\mathcal{F}})\big)$.
\end{enumerate}

\noindent
Note that, since the sheaf morphisms $\mathcal{F}_m^{m+1}$ are monomorphisms, the linear maps $(\mathcal{F}_m^{m+1})_{U_\sigma}$ are injective. This implies that the graded modules $\mathcal{C}(\vec{\mathcal{F}})(U_{\sigma})$ are free. Hence, $C^\bullet\big(\mathcal{U},\mathcal{C}(\vec{\mathcal{F}})\big)$ is a cochain complex of free graded modules and the boundary morphisms can be represented by matrices with respect to homogenous bases. 

By Proposition \ref{538} and Theorem \ref{742}, this method of computation is applicable, for example, to filtrations of cellular sheaves on finite simplicial complexes and the \v{C}ech coboundary morphisms are given by (\ref{604}). 

Under the conditions discussed above, we have an explicit method to compute persistent sheaf cohomology by (graded) matrix reduction. This method of computation resembles the method used in persistent homology as described in \cite{carlsson}. The advantage of this method is that, in the graded module approach, we only consider each generator of the graded modules once whereas, if we would compute the cohomology of each sheaf $\mathcal{F}_m$ individually, we would reconsider a generator at the $0$-th level on every consecutive level.

\begin{remark} \label{363}
Instead of linear diagrams of sheaves we could also consider linear diagrams of cosheaves $\vec{\mathcal{L}}\in\mathbf{Fun}\big(\mathbb{N}_0,\mathbf{coShv}(X,\mathbf{Mod}_R)\big)$. By the same (or dual) arguments, one can show that Theorem \ref{697} also holds for cosheaves, i.e.\ $\mathbf{Fun}\big(\mathbb{N}_0,\mathbf{coShv}(X,\mathbf{Mod}_R)\big)\cong \mathbf{coShv}(X,\mathbf{grMod}_{R[t]})$ with an analogous statement for the subcatgories of functors of finite type and cosheaves of finitely generated graded modules if $X$ is a finite space. If $R=\mathbb{F}$ and $X$ is a poset equipped with the Alexandrov topology, then the category of cosheaves on $X$ has enough projectives \cite[page 117]{curry}. Hence, in (dual) analogy to the sheaf case, we can define the cosheaf homology functor $H_k(X,-)\colon\mathbf{coShv}(X,\mathbf{Vec}_\mathbb{F})\rightarrow \mathbf{Vec}_\mathbb{F}$ as the $k$-th left derived functor of the gloabl section functor. By applying this functor to $\vec{\mathcal{L}}$, we obtain a \emph{cosheaf persistence module of algebraic type} $H_k(X,-)\circ \vec{\mathcal{L}}$. In this case, by the same (dual) arguments, we can also show a cosheaf version of Theorem \ref{133} and \ref{285}, i.e.\ the persistent cosheaf homology of $\vec{\mathcal{L}}$ can be computed from the homology of the corresponding cosheaf of graded modules. Therefore, the whole theory of Section \ref{129} also applies to cosheaves of vector spaces on posets.
\end{remark}

\section{Persistent sheaf cohomology type T} \label{779}

\subsection{Sheaf copersistence modules of topological type} \label{769}

There is a connection between sheaf cohomology and singular or simplicial cohomology. If $X$ is a semi-locally contractible topological space, then, for all $k\geq 0$, we have $H^k_{\text{sing}}(X,R)\cong H^k(X,R_X)$ \cite{sella}. In other words, if $X$ is a sufficiently nice topological space, then the singular cohomology of $X$ is the sheaf cohomology of $R_X$ (the constant $R$-valued sheaf on $X$). If $X$ is a simplicial complex, then, as described in Example \ref{374}, the cohomology of the constant sheaf on $X$ agrees with the simplicial cohomology of $X$. Therefore, it is natural to ask the question: Can we obtain persistent singular or simplicial cohomology from persistent sheaf cohomology ? The problem is that the construction of Section \ref{129} applies to objects in $\mathbf{Fun}\big(\mathbb{N}_0,\mathbf{Shv}(X,\mathbf{Mod}_R)\big)$, whereas the usual construction of cohomology copersistence modules applies to objects in $\mathbf{Fun}(\mathbb{N}_0,\mathbf{Top})$. Therefore, we now introduce an alternative construction of sheaf (co)persistence modules, generalizing the construction of ordinary cohomology copersistence modules in a natural way.  

Let $\vec{X}\in\mathbf{Fun}(\overline{\mathbb{N}}_0,\mathbf{Top})$ be a functor on the category corresponding to the poset $\overline{\mathbb{N}}_0\coloneqq\mathbb{N}_0\cup\{\infty\}$ and let $\mathcal{F}\in\mathbf{Shv}(X_\infty,\mathbf{Mod}_R)$. The functor $\vec{X}$ corresponds to the following commutative diagram 
\begin{equation} \label{269}
\begin{tikzcd}[column sep=large,row sep=large]
X_0 \arrow[r,"X_0^1"] \arrow[dr,swap,"X_0^\infty"] & X_1 \arrow[r,"X_1^2"] \arrow[d,swap,"X_1^\infty"] & X_2 \arrow[r,"X_{2}^3"] \arrow[dl,swap,"X_2^\infty"] & \cdots \arrow[dll,] \\
& X_\infty
\end{tikzcd} \, .
\end{equation}
Define, for every $i\in\mathbb{N}_0$, the sheaf $\mathcal{F}^i:=(X_i^\infty)^*\mathcal{F}$ on $X_i$, where $(X_i^\infty)^*$ is the inverse image functor with respect to $X_i^\infty$ (Definition \ref{894}). Note that $\mathcal{F}^i=(X_i^\infty)^*\mathcal{F}=(X_{i+1}^\infty\circ X_i^{i+1})^*\mathcal{F}=(X_i^{i+1})^*(X_{i+1}^\infty)^*\mathcal{F}=(X_i^{i+1})^*\mathcal{F}^{i+1}$. Hence, we can visualize the construction as iteratively pulling back the sheaves $\mathcal{F}^{i}$ along the maps $X_{i-1}^i$  
\begin{equation*} %\label{733}
\begin{tikzcd}[column sep=large]
\mathcal{F}^0 & \arrow[l,swap,"(X_0^1)^*",maps to] \mathcal{F}^1 & \arrow[l,swap,"(X_1^2)^*",maps to] \mathcal{F}^2 & \arrow[l,swap,"(X_2^3)^*",maps to] \cdots \\[-20pt]
X_0 \arrow[r,"X_0^1"] & X_1 \arrow[r,"X_1^2"] & X_2 \arrow[r,"X_2^3"] & \cdots 
\end{tikzcd} \, .
\end{equation*}
Now, for every $i\in\mathbb{N}_0$, apply the sheaf cohomology functor $H^k(X_i,-)$ to the sheaf $\mathcal{F}^i$ and connect the sheaf cohomology modules by the induced morphisms $H^k(X_i^{i+1})\colon H^k(X_{i+1},\mathcal{F}^{i+1})\rightarrow H^k(X_i,(X_i^{i+1})^*\mathcal{F}^{i+1})=H^k(X_i,\mathcal{F}^i)$ \cite[page 100]{iversen}, to obtain the functor $\cev{D}^k(\vec{X},\mathcal{F})\colon\mathbb{N}_0^\text{op}\rightarrow \mathbf{Mod}_R$
\begin{equation*} %\label{552}
\begin{tikzcd}[column sep=large]
H^k(X_{0},\mathcal{F}^{0}) & \arrow[l,swap,"H^k(X_{0}^1)"] H^k(X_{1},\mathcal{F}^{1}) & \arrow[l,swap,"H^k(X_{1}^2)"] H^k(X_{2},\mathcal{F}^{2}) & \arrow[l,swap,"H^k(X_{2}^3)"] \cdots \quad .
\end{tikzcd}
\end{equation*}
Hence, given $\vec{X}\in\mathbf{Fun}(\overline{\mathbb{N}}_0,\mathbf{Top})$ and $\mathcal{F}\in\mathbf{Shv}(X_{\infty},\mathbf{Mod}_R)$, for every $k\in\mathbb{N}_0$, we obtain a copersistence module $\cev{D}^k(\vec{X},\mathcal{F})\in\mathbf{Fun}(\mathbb{N}_0^\text{op},\mathbf{Mod}_R)$. We call $\cev{D}^k(\vec{X},\mathcal{F})$ a \emph{sheaf copersistence module of topological type} or for brevity a \emph{copersistence module of type T}.

We can also consider a functor $\vec{X}\in\mathbf{Fun}(\mathbf{[n]},\mathbf{Top})$ as depicted in the following diagram
\begin{equation} \label{832}
\begin{tikzcd}
X_0 \arrow[r,"X_0^1"] & X_1 \arrow[r,"X_1^2"] & \cdots \arrow[r,"X_{n-3}^{n-2}"] & X_{n-2} \arrow[r,"X_{n-2}^{n-1}"] & X_{n-1}
\end{tikzcd}
\end{equation}
and a sheaf $\mathcal{F}\in\mathbf{Shv}(X_{n-1},\mathbf{Mod}_R)$ and construct $\cev{D}^k(\vec{X},\mathcal{F})\in\mathbf{Fun}(\mathbf{[n]}^\text{op},\mathbf{Mod}_R)$ in a similar way. In this case, we pull back from the terminal object $n-1$ of $\mathbf{[n]}$. We can identify (\ref{832}) with a functor $\vec{X}'\in\mathbf{Fun}(\overline{\mathbb{N}}_0,\mathbf{Top})$ such that $\vec{X}'$ agrees with $\vec{X}$ on $\mathbf{[n]}$, $X_i'\coloneqq X_{n-1}$ and $(X')_i^{i+1}\coloneqq\text{id}$ for all $i\geq n-1$. 

If the maps $X_i^{i+1}$ in (\ref{832}) are inclusions, one can think about the pair $(\vec{X},\mathcal{F})$ as a sheaf on a filtered topological space. If $R=\mathbb{F}$ and $\cev{D}^k(\vec{X},\mathcal{F})\in\mathbf{Fun}(\mathbb{N}_0^\text{op},\mathbf{vec}_\mathbb{F})$, by Remark \ref{151}, there is an interval decomposition $\cev{D}^k(\vec{X},\mathcal{F})\cong\bigoplus_{i\in I}\cev{I}_{[a_i,b_i]}$. An interval module $\vec{I}_{[a_i,b_i]}$ in this decomposition corresponds to a sheaf cohomology class in $\cev{D}^k(\vec{X},\mathcal{F})$ that is born at index $a_i$ and dies at index $b_i$. The corresponding persistence barcode $\text{BC}\big(\cev{D}^k(\vec{X},\mathcal{F})\big)$ describes the evolution of all sheaf cohomology classes of degree $k$ along the sheaves over $\vec{X}$. We call the collection of interval modules corresponding to a sheaf copersistence module of topological type, represented by $\text{BC}\big(\cev{D}^k(\vec{X},\mathcal{F})\big)$, the \emph{(type T) persistent sheaf cohomology} of $(\vec{X},\mathcal{F})$.  

Suppose $\vec{X}\in\mathbf{Fun}(\overline{\mathbb{N}}_0,\mathbf{Top})$ such that $X_i$ is semi-locally contractible for all $i\in\mathbb{N}_0$ and $\mathcal{F}=R_{X_\infty}$ is the constant sheaf on $X_\infty$. It is easy to see that the inverse image of the constant sheaf on $X_\infty$ is the constant sheaf on $X_i$, i.e.\ $\mathcal{F}^i=(X_i^\infty)^*R_{X_\infty}\cong R_{X_i}$ for all $i\in\mathbb{N}_0$. This implies $H^k(X_i,\mathcal{F}^i)\cong H^k_{\text{sing}}(X_i,R)$. Moreover, the induced morphism $H^k(X_i^{i+1})$ agrees with the induced morphism $H^k_{\text{sing}}(X_i^{i+1})$ in singular cohomology. Therefore, we get $\cev{D}^k(\vec{X},\mathcal{F})\cong H^k_\text{sing}(-,R)\circ \vec{X}|_{\mathbb{N}_0}$ and, if we assume $R=\mathbb{F}$ and finite-dimensional cohomology, the type T persistent sheaf cohomology of $(\vec{X},\mathcal{F})$ agrees with the persistent singular cohomology of $\vec{X}|_{\mathbb{N}_0}$. If we consider simplicial complexes, by a similar argument we obtain that type T persistent sheaf cohomology also generalizes persistent simplicial cohomology. Hence, type T persistent sheaf cohomology is a natural generalization of ordinary persistent cohomology. 

The example in Figure \ref{624} shows how this construction plays out in the case of cellular sheaves on simplicial complexes if we consider a general sheaf $F$ instead of the constant sheaf. 

One could also consider index categories with arbitrary orientations or index categories of the form $\mathbb{N}_0^d$ together with some additional terminal object and construct zigzag or multi-dimensional copersistence modules of type T.
 
\begin{figure}[h] 
\centering
\includegraphics[scale=0.52]{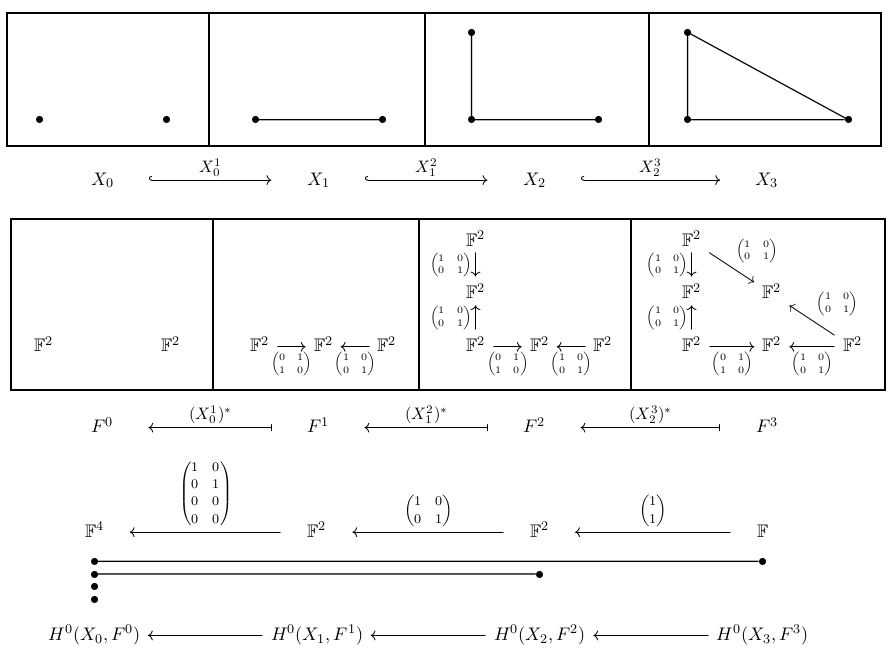}
\caption{The first row shows a filtered simplicial complex $X_3$. The second row shows a sheaf $F_3$ on $X_3$ and its pullbacks to the subcomplexes of the filtration. The third row shows the corresponding copersistence module of type T and persistence barcode.}
\label{624}
\end{figure}

\subsection{Relation between (co)persistence modules of type A and T} \label{959}

Now that we have two different versions of sheaf (co)persistence modules the question of how these two constructions relate to each other arises. Consider again the diagram of topological spaces and continuous maps $\vec{X}\in\mathbf{Fun}(\overline{\mathbb{N}}_0,\mathbf{Top})$, as depicted  in (\ref{269}), and a sheaf $\mathcal{F}$ on $X_{\infty}$. For every $i\in\mathbb{N}_0$, we can apply the direct image functor $(X_i^\infty)_*$ (Definition \ref{434}) to the sheaf $\mathcal{F}^i=(X_i^\infty)^*\mathcal{F}$ on $X_i$, to push it forward to a sheaf $\mathcal{G}_i\coloneqq(X_i^\infty)_*(X_i^\infty)^*\mathcal{F}$ on $X_{\infty}$. Since the inverse image functor $(X_{i-1}^i)^*$ is left adjoint to the direct image functor $(X_{i-1}^i)_*$ (Theorem \ref{573}), for every $i\geq 1$, there is a natural transformation $\overline{\eta}_i\colon\text{id}\rightarrow (X_{i-1}^i)_*(X_{i-1}^i)^*$ inducing a sheaf morphism $\eta_{i}\colon\mathcal{F}^i\rightarrow (X_{i-1}^i)_*(X_{i-1}^i)^*\mathcal{F}^i$ or equivalently $\eta_{i}\colon(X_i^\infty)^*\mathcal{F}\rightarrow (X_{i-1}^i)_*(X_{i-1}^i)^*(X_i^\infty)^*\mathcal{F}$. Applying the direct image functor $(X_i^\infty)_*$ yields a morphism $\mathcal{G}_i^{i-1}\coloneqq(X_i^\infty)_*\eta_{i}\colon(X_i^\infty)_*(X_i^\infty)^*\mathcal{F}\rightarrow (X_i^\infty)_*(X_{i-1}^i)_*(X_{i-1}^i)^*(X_i^\infty)^*\mathcal{F}=(X_{i-1}^\infty)_*(X_{i-1}^\infty)^*\mathcal{F}$ of sheaves on $X_{\infty}$. Hence, to the diagram of topological spaces $\vec{X}$ and the sheaf $\mathcal{F}$ on $X_{\infty}$ we can associate the following functor $\cev{\mathcal{G}}(\vec{X},\mathcal{F})\in\mathbf{Fun}\big(\mathbb{N}_0^\text{op},\mathbf{Shv}(X_{\infty},\mathbf{Mod}_R)\big)$ 
\begin{equation*} %\label{447} 
\begin{tikzcd}
\mathcal{G}_{0} & \arrow[l,swap,"\mathcal{G}_1^0"] \mathcal{G}_{1} & \arrow[l,swap,"\mathcal{G}_2^1"] \mathcal{G}_2 & \arrow[l,swap,"\mathcal{G}_3^2"] \cdots \quad .
\end{tikzcd}
\end{equation*}
Figure \ref{875} shows an example where $\vec{X}\in\mathbf{Fun}(\mathbf{[3]},\mathbf{Top})$ is a finite filtration of simplicial complexes and $F$ is the constant $\mathbb{F}$-valued cellular sheaf on $X_3$. As already discussed, in this case the persistent sheaf cohomology of type T of $(\vec{X},F)$ agrees with the usual simplicial persistent cohomology of the filtration $\vec{X}$. Figure \ref{875} also shows the corresponding functor $\cev{G}(\vec{X},F)$ of cellular sheaves on $X_3$, the sheaf copersistence module of algebraic type $H^k(X_3,-)\circ \cev{G}(\vec{X},F)$ and its persistence barcode. The persistence barcode of $H^k(X_3,-)\circ \cev{G}(\vec{X},F)$ is exactly the simplicial persistent cohomology barcode of the filtration $\vec{X}$. Therefore, in this example, $\cev{D}^k(\vec{X},F)\cong H^k(X_3,-)\circ \cev{G}(\vec{X},F)$.  
\begin{figure}[h!] 
\centering
\includegraphics[scale=0.55]{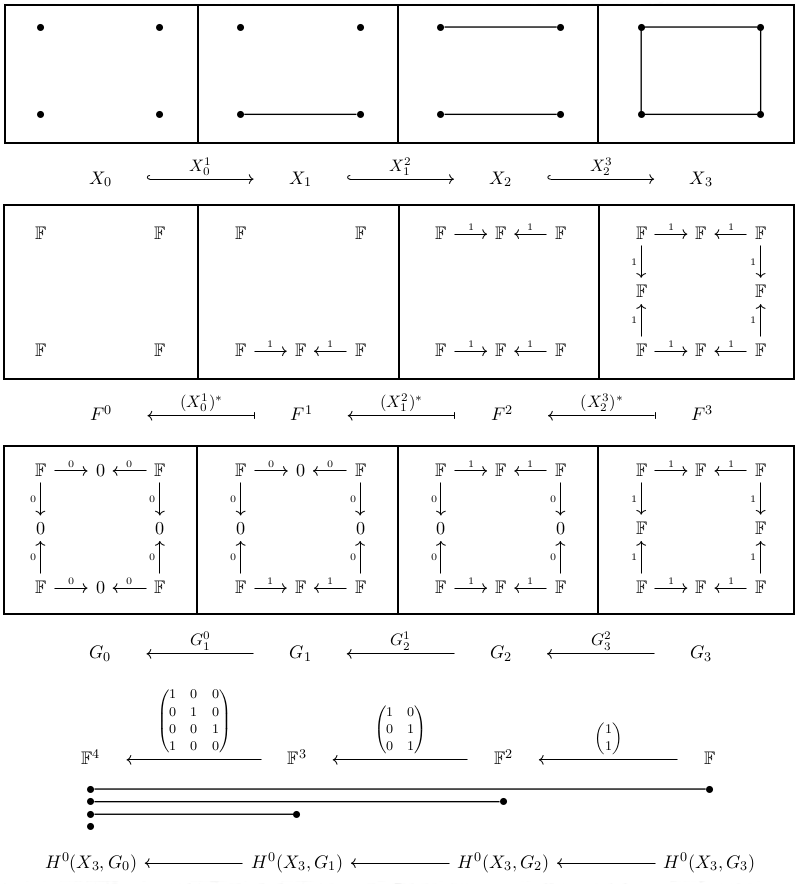}
\caption{The first row is a filtered simplicial complex $X_3$. The second row is the constant sheaf $F^3$ on $X_3$ and its pullbacks to the subcomplexes of the filtration. The third row is $\cev{G}(\vec{X},F)$. The fourth row shows $H^k(X_3,-)\circ \cev{G}(\vec{X},F)$ and the corresponding persistence barcode.}
\label{875}
\end{figure}
It is obvious that this can not hold in general. If $X_\infty$ is a one-point space, then $H^k(X_\infty,\mathcal{F}^i)=0$ for all $k>0$. On the other hand, if $\mathcal{F}$ is, for example, the constant sheaf, then $\cev{D}^k(\vec{X},\mathcal{F})$ is isomorphic to the singular cohomology persistence module of $\vec{X}$. Therefore, if we choose $\vec{X}$ in such a way that there is some non-trivial cohomology in degree $k>0$, then $\cev{D}^k(\vec{X},\mathcal{F})\ncong H^k(X_\infty,-)\circ \cev{G}(\vec{X},\mathcal{F})$. In the following we derive conditions under which a copersistence module of type T is isomorphic to a corresponding copersistence module of type~$\text{A}$.

\begin{proposition} \label{452}
Let $f\colon X\rightarrow Y$ be a continuous map, $\mathcal{F}\in\mathbf{Shv}(Y,\mathbf{M})$ and $\eta_\mathcal{F}\colon\mathcal{F}\rightarrow f_*f^*\mathcal{F}$ the sheaf morphism induced by the unit $\eta\colon\text{id}\rightarrow f_*f^*$ of the adjunction $f^*\dashv f_*$. If $f_*$ is exact, then, for every $k\in\mathbb{N}_0$, we have the following commutative diagram
\begin{equation} \label{884}
\begin{tikzcd}[column sep=large]
H^k(Y,\mathcal{F}) \arrow[r,"H^k(Y\text{,}\eta_\mathcal{F})"] \arrow[dr,swap,"H^k(f)"] &[10pt] H^k(Y,f_*f^*\mathcal{F}) \arrow[d,"\cong"] \\
& H^k(X,f^*\mathcal{F})
\end{tikzcd} \quad .
\end{equation}
\end{proposition}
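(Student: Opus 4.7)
The plan is to derive the commutativity of (\ref{884}) from a natural isomorphism $H^k(Y,f_*\mathcal{G})\cong H^k(X,\mathcal{G})$ valid whenever $f_*$ is exact, and then to identify $H^k(f)$ with the composition along the two legs of the triangle.

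The starting point is the tautological identity $\Gamma(Y,-)\circ f_*=\Gamma(X,-)$: by Definition \ref{434}, for any sheaf $\mathcal{G}$ on $X$ one has $\Gamma(Y,f_*\mathcal{G})=(f_*\mathcal{G})(Y)=\mathcal{G}(f^{-1}(Y))=\mathcal{G}(X)=\Gamma(X,\mathcal{G})$, naturally in $\mathcal{G}$. Next I would show that $f_*$ preserves injective objects: since $f^*$ is exact by Theorem \ref{573}(2) and $f^*\dashv f_*$ by Theorem \ref{573}(3), for any injective $I\in\mathbf{Shv}(X,\mathbf{M})$ the functor $\text{Hom}_{\mathbf{Shv}(Y,\mathbf{M})}(-,f_*I)\cong \text{Hom}_{\mathbf{Shv}(X,\mathbf{M})}(f^*(-),I)$ is exact as a composition of exact functors, so $f_*I$ is injective. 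Combined with the hypothesis that $f_*$ is exact, this implies that for any injective resolution $\mathcal{G}\to I^\bullet$ in $\mathbf{Shv}(X,\mathbf{M})$, the complex $f_*\mathcal{G}\to f_*I^\bullet$ is an injective resolution in $\mathbf{Shv}(Y,\mathbf{M})$. Using the identity above, this yields a natural isomorphism
\begin{equation*}
H^k(Y,f_*\mathcal{G})=H^k\bigl(\Gamma(Y,f_*I^\bullet)\bigr)=H^k\bigl(\Gamma(X,I^\bullet)\bigr)=H^k(X,\mathcal{G}),
\end{equation*}
which is independent of the chosen resolution and natural in $\mathcal{G}$.

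Setting $\mathcal{G}=f^*\mathcal{F}$ provides the vertical isomorphism in (\ref{884}). The diagonal morphism $H^k(f)$ is constructed in Section \ref{710} (following \cite[page 100]{iversen}) as the composition of $H^k(Y,\eta_\mathcal{F})\colon H^k(Y,\mathcal{F})\to H^k(Y,f_*f^*\mathcal{F})$ with this very isomorphism, so the triangle commutes essentially by the definition of $H^k(f)$. The main obstacle is the bookkeeping verification that the construction of $H^k(f)$ carried out in Section \ref{710} does factor through the unit $\eta_\mathcal{F}$ and the isomorphism above as described; once this compatibility is confirmed, no further work is required.
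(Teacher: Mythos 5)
Your first half is correct and coincides with the paper's argument: $f_*$ preserves injectives because it has the exact left adjoint $f^*$, exactness of $f_*$ then makes $0\to f_*f^*\mathcal{F}\to f_*\mathcal{J}^\bullet$ an injective resolution for any injective resolution $0\to f^*\mathcal{F}\to\mathcal{J}^\bullet$, and $\Gamma(Y,f_*\mathcal{J}^k)=\Gamma(X,\mathcal{J}^k)$ produces the vertical isomorphism $H^k(Y,f_*f^*\mathcal{F})\cong H^k(X,f^*\mathcal{F})$.

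The gap is in the last step. It is not true that $H^k(f)$ is \emph{constructed} as $H^k(Y,\eta_\mathcal{F})$ followed by that isomorphism, so the triangle does not commute ``by definition.'' In Section \ref{710}, $H^k(f)$ is defined as $H^k\bigl(\Gamma(X,g^\bullet)\bigr)\circ H^k\bigl(\Gamma(Y,\eta^\bullet)\bigr)$, where $\eta^\bullet=(\eta_{\mathcal{I}^n})_n$ is the unit applied termwise to an injective resolution $\mathcal{I}^\bullet$ of $\mathcal{F}$ (whose terms $f_*f^*\mathcal{I}^n$ need not be injective), and $g^\bullet\colon f^*\mathcal{I}^\bullet\to\mathcal{J}^\bullet$ is a lift of $\mathrm{id}_{f^*\mathcal{F}}$. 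By contrast, $H^k(Y,\eta_\mathcal{F})$ is computed from a lift $h^\bullet\colon\mathcal{I}^\bullet\to f_*\mathcal{J}^\bullet$ of $\eta_\mathcal{F}$ between the two injective resolutions. What the ``bookkeeping'' you defer actually requires is the observation that $f_*g^\bullet\circ\eta^\bullet$ is another cochain morphism lifting $\eta_\mathcal{F}$, hence is homotopic to $h^\bullet$ by the uniqueness part of Theorem \ref{171}; applying $\Gamma(Y,-)$, using $\Gamma(Y,f_*g^\bullet)=\Gamma(X,g^\bullet)$, and passing to cohomology then identifies the two legs of the triangle. This homotopy comparison is the actual content of the proposition rather than a routine verification, so you should supply it explicitly; with it added, your proof becomes essentially the paper's.
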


\begin{proof}
Let $0\rightarrow \mathcal{F}\rightarrow \mathcal{I}^\bullet$ and $0\rightarrow f^*\mathcal{F}\rightarrow\mathcal{J}^\bullet$ be injective resolutions of $\mathcal{F}$ and $f^*\mathcal{F}$, respectively. Since $f_*$ has the exact left adjoint $f^*$ (Theorem \ref{573}) it preserves injectives \cite[Lemma 3.1]{nlab:injectives}. By assumption $f_*$ is exact. Therefore $0\rightarrow f_*f^*\mathcal{F}\rightarrow f_*\mathcal{J}^\bullet$ is an injective resolution of $f_*f^*\mathcal{F}$. The unit $\eta$ induces a cochain morphism $\eta^\bullet\colon \mathcal{I}^\bullet\rightarrow f_*f^*\mathcal{I}^\bullet$ defined by $\eta^\bullet\coloneqq (\eta_{\mathcal{I}^n})_{n\in\mathbb{N}_0}$. By Theorem \ref{171}, there exist cochain morphisms $g^\bullet\colon f^*\mathcal{I}^\bullet\rightarrow\mathcal{J}^\bullet$ and $h^\bullet\colon \mathcal{I}^\bullet\rightarrow f_*\mathcal{J}^\bullet$ lifting $\text{id}\colon f^*\mathcal{F}\rightarrow f^*\mathcal{F}$ and $\eta_\mathcal{F}\colon \mathcal{F}\rightarrow f_*f^*\mathcal{F}$, respectively. Then $f_*g^\bullet\colon f_*f^*\mathcal{I}^\bullet\rightarrow f_*\mathcal{J}^\bullet$ is a cochain morphism lifting $\text{id}\colon f_*f^*\mathcal{F}\rightarrow f_*f^*\mathcal{F}$. Moreover, $f_*g^\bullet\circ \eta^\bullet$ is a cochain morphism lifting $\eta_\mathcal{F}\colon \mathcal{F}\rightarrow f_*f^*\mathcal{F}$ and therefore, again by Theorem \ref{171}, $f_*g^\bullet\circ \eta^\bullet\simeq h^\bullet$. Since $\Gamma(Y,f_*\mathcal{J}^k)=f_*\mathcal{J}^k(Y)=\mathcal{J}^k(f^{-1}(Y))=\mathcal{J}^k(X)=\Gamma(X,\mathcal{J}^k)$ and, by the same argument, $\Gamma(Y,f_*g^\bullet)=\Gamma(X,g^\bullet)$,  we obtain the following diagram which is commutative up to homotopy 
\begin{equation} \label{863}
\begin{tikzcd}[row sep=large, column sep=large]
\Gamma(Y,\mathcal{I}^\bullet) \arrow[r,"\Gamma(Y\text{,}h^\bullet)"] \arrow[dr,swap,"\Gamma(Y\text{,}\eta^\bullet)"] & \Gamma(Y,f_*\mathcal{J}^\bullet) \arrow[r,"="] & \Gamma(X,\mathcal{J}^\bullet) \\
& \Gamma(Y,f_*f^*\mathcal{I}^\bullet) \arrow[u,swap,"\Gamma(Y\text{,}f_*g^\bullet)"{yshift=4pt}] \arrow[ur,swap,"\Gamma(X\text{,}g^\bullet)"]
\end{tikzcd} \quad .
\end{equation}
By the construction in Section \ref{710}, the morphism induced by $f$ is given by $H^k(f)\coloneqq H^k\big(\Gamma(X,g^\bullet)\big)\circ H^k\big(\Gamma(Y,\eta^\bullet)\big)$. The construction of sheaf cohomology functors up to natural isomorphism and (\ref{863}) then imply (\ref{884}).
\end{proof}

\begin{theorem} \label{112}
Let $\vec{X}\in\mathbf{Fun}(\overline{\mathbb{N}}_0,\mathbf{Top})$ be a functor such that $(X_i^j)_*$ is exact for all $i\leq j\in\overline{\mathbb{N}}_0$ and $\mathcal{F}\in\mathbf{Shv}(X_{\infty},\mathbf{Mod}_R)$. Then, for all $k\in\mathbb{N}_0$, we obtain isomorphisms of copersistence modules 
\begin{equation*} %\label{594}
H^k(X_{\infty},-)\circ \cev{\mathcal{G}}(\vec{X},\mathcal{F})\cong \cev{D}^k(\vec{X},\mathcal{F}) \quad .
\end{equation*}
\end{theorem}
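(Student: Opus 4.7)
The plan is to produce, for each $i\in\mathbb{N}_0$, the isomorphism $\phi_i\colon H^k(X_\infty,\mathcal{G}_i)\xrightarrow{\cong}H^k(X_i,\mathcal{F}^i)$ guaranteed by Proposition \ref{452} applied to $f=X_i^\infty$ and the sheaf $\mathcal{F}$ (the hypothesis that $(X_i^\infty)_*$ is exact is exactly what that proposition needs), and then to check that the family $(\phi_i)_{i\in\mathbb{N}_0}$ is compatible with the transition morphisms, i.e.\ that the square
\begin{equation*}
\begin{tikzcd}[column sep=large]
H^k(X_\infty,\mathcal{G}_{i+1}) \arrow[r,"\phi_{i+1}"] \arrow[d,swap,"H^k(X_\infty\text{,}\mathcal{G}_{i+1}^i)"] & H^k(X_{i+1},\mathcal{F}^{i+1}) \arrow[d,"H^k(X_i^{i+1})"] \\
H^k(X_\infty,\mathcal{G}_i) \arrow[r,"\phi_i"] & H^k(X_i,\mathcal{F}^i)
\end{tikzcd}
\end{equation*}
commutes for every $i$. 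Once this is established, the $\phi_i$ assemble into an isomorphism of functors $\mathbb{N}_0^{\mathrm{op}}\to\mathbf{Mod}_R$, which is the desired statement.

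To verify the commutativity I would descend to the cochain level using injective resolutions. Pick injective resolutions $0\to\mathcal{F}^i\to\mathcal{J}^\bullet$ on $X_i$ and $0\to\mathcal{F}^{i+1}\to\mathcal{K}^\bullet$ on $X_{i+1}$. Because $(X_i^{i+1})^*$ is exact (Theorem \ref{573}), its right adjoint $(X_i^{i+1})_*$ preserves injectives; combined with the hypothesis that $(X_i^{i+1})_*$ is exact, this makes $(X_i^{i+1})_*\mathcal{J}^\bullet$ an injective resolution of $(X_i^{i+1})_*\mathcal{F}^i=(X_i^{i+1})_*(X_i^{i+1})^*\mathcal{F}^{i+1}$. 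By Theorem \ref{171}, lift the unit $\eta_{i+1}\colon\mathcal{F}^{i+1}\to(X_i^{i+1})_*(X_i^{i+1})^*\mathcal{F}^{i+1}$ to a cochain morphism $h^\bullet\colon\mathcal{K}^\bullet\to(X_i^{i+1})_*\mathcal{J}^\bullet$. By the proof of Proposition \ref{452}, $H^k(X_i^{i+1})$ is then computed as $H^k(\Gamma(X_{i+1},h^\bullet))$, via the tautological identification $\Gamma(X_{i+1},(X_i^{i+1})_*\mathcal{J}^\bullet)=\Gamma(X_i,\mathcal{J}^\bullet)$ that implements $\phi_{i+1}$ (up to the resolution choice).

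Now apply $(X_{i+1}^\infty)_*$ to the entire picture. Again it is exact and preserves injectives, so $(X_{i+1}^\infty)_*\mathcal{K}^\bullet$ and $(X_{i+1}^\infty)_*(X_i^{i+1})_*\mathcal{J}^\bullet=(X_i^\infty)_*\mathcal{J}^\bullet$ are injective resolutions of $\mathcal{G}_{i+1}$ and $\mathcal{G}_i$ respectively, and $(X_{i+1}^\infty)_*h^\bullet$ lifts $(X_{i+1}^\infty)_*\eta_{i+1}=\mathcal{G}_{i+1}^i$. Since $\Gamma(X_\infty,(X_{i+1}^\infty)_*-)=\Gamma(X_{i+1},-)$ and $\Gamma(X_\infty,(X_i^\infty)_*-)=\Gamma(X_i,-)$ by definition of pushforward, applying $\Gamma(X_\infty,-)$ to $(X_{i+1}^\infty)_*h^\bullet$ returns exactly the same cochain morphism $\Gamma(X_{i+1},h^\bullet)$. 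Taking $H^k$ shows that both $\phi_i\circ H^k(X_\infty,\mathcal{G}_{i+1}^i)$ and $H^k(X_i^{i+1})\circ\phi_{i+1}$ are represented by $H^k(\Gamma(X_{i+1},h^\bullet))$ under the same identifications, yielding the commutativity. The main obstacle is bookkeeping: making sure the identifications of global sections through the two pushforwards, the choices of lifts (unique only up to homotopy), and the invocations of Proposition \ref{452} on the two different base maps all line up. Once this is set up carefully, the argument is a routine exercise in derived-functor naturality.
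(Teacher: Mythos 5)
Your proposal is correct and follows essentially the same route as the paper: it applies Proposition \ref{452} to each $X_i^{i+1}$ and $X_{i+1}^\infty$ (using the exactness hypotheses to push forward injective resolutions and identify global sections tautologically), and then checks compatibility of the resulting pointwise isomorphisms with the transition maps. The paper compresses this compatibility check into the single commutative diagram ``implied by the statement and the proof of Proposition \ref{452}''; your cochain-level unpacking with the lift $h^\bullet$ is exactly what that invocation amounts to.
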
 

\begin{proof}
For an $i\in\mathbb{N}_0$ we have $\mathcal{F}^{i+1}=(X_{i+1}^\infty)^*\mathcal{F}$ and $\eta_{i+1}\colon\mathcal{F}^{i+1}\rightarrow (X_i^{i+1})_*(X_i^{i+1})^*\mathcal{F}^{i+1}$. Since $(X_{i+1}^\infty)_*$ and $(X_i^{i+1})_*$ are exact, the statement and the proof of Proposition \ref{452} imply 
\begin{equation*} %\label{532}
\begin{tikzcd}[column sep=huge,row sep=large]
H^k\big(X_{\infty},(X_{i+1}^\infty)_*\mathcal{F}^{i+1}\big) \arrow[d,"\cong"] \arrow[rr,"H^k(X_{\infty}\text{,}(X_{i+1}^\infty)_*\eta_{i+1})"] & & H^k\big(X_{\infty},(X_{i+1}^\infty)_*(X_i^{i+1})_*(X_i^{i+1})^*\mathcal{F}^{i+1}\big) \arrow[d,"\cong"] \\
H^k\big(X_{i+1},\mathcal{F}^{i+1}\big) \arrow[rr,"H^k(X_{i+1}\text{,}\eta_{i+1})"] \arrow[drr,swap,"H^k(X_i^{i+1})"] & & H^k\big(X_{i+1},(X_i^{i+1})_*(X_i^{i+1})^*\mathcal{F}^{i+1}\big) \arrow[d,"\cong"] \\
& & H^k\big(X_i,(X_i^{i+1})^*\mathcal{F}^{i+1}\big) \quad .
\end{tikzcd}
\end{equation*} 
With $\mathcal{G}_{i+1}=(X_{i+1}^\infty)_*\mathcal{F}^{i+1}$, $\mathcal{G}_i=(X_{i+1}^\infty)_*(X_i^{i+1})_*(X_i^{i+1})^*\mathcal{F}^{i+1}$ and $\mathcal{G}_{i+1}^i=(X_{i+1}^\infty)_*\eta_{i+1}$, the iterative application of this result finishes the proof.
\end{proof} 

\noindent
In other words, Theorem \ref{112} states, if $(X_i^j)_*$ is exact for all $i\leq j\in \overline{\mathbb{N}}_0$, then a copersistence module of type T is isomorphic to a corresponding copersistence module of type A. The problem is that $\cev{\mathcal{G}}(\vec{X},\mathcal{F})\in\mathbf{Fun}\big(\mathbb{N}_0^\text{op},\mathbf{Shv}(X_{\infty},\mathbf{Mod}_R)\big)$, whereas in Section \ref{129} we considered the category $\mathbf{Fun}\big(\mathbb{N}_0,\mathbf{Shv}(X_\infty,\mathbf{Mod}_R)\big)$. If $\vec{X}\in\mathbf{Fun}(\mathbf{[m]},\mathbf{Top})$ we obtain $\cev{\mathcal{G}}(\vec{X},\mathcal{F})\in\mathbf{Fun}\big(\mathbf{[m]}^\text{op},\mathbf{Shv}(X_{m-1},\mathbf{Mod}_R)\big)\cong\mathbf{Fun}\big(\mathbf{[m]},\mathbf{Shv}(X_{m-1},\mathbf{Mod}_R)\big)$ where the equivalence assigns to $\cev{\mathcal{G}}(\vec{X},\mathcal{F})$ its mirror image $\vec{\mathcal{G}}(\vec{X},\mathcal{F})\in\mathbf{Fun}\big(\mathbf{[m]},\mathbf{Shv}(X_{m-1},\mathbf{Mod}_R)\big)$ :
\begin{equation*} %\label{642} 
\begin{tikzcd}
\mathcal{G}_{m-1} \arrow[r,"\mathcal{G}_{m-1}^{m-2}"] & \mathcal{G}_{m-2} \arrow[r,"\mathcal{G}_{m-2}^{m-3}"] & \cdots \arrow[r,"\mathcal{G}_{2}^{1}"] & \mathcal{G}_1 \arrow[r,"\mathcal{G}_{1}^{0}"] & \mathcal{G}_0 \quad .
\end{tikzcd}
\end{equation*} 
If $R=\mathbb{F}$ and if we define $\{\{[a_i,b_i]\text{ }|\text{ } i\in I\}\}^{-1}\coloneqq \{\{[m-1-b_i,m-1-a_i]\text{ }|\text{ }i\in I\}\}$, we obtain the following corollary.

\begin{corollary} \label{139}
Let $\vec{X}\in\mathbf{Fun}(\mathbf{[m]},\mathbf{Top})$ be a functor such that $(X_i^j)_*$ is exact for every $0\leq i\leq j\leq m-1$ and $\mathcal{F}\in\mathbf{Shv}(X_{m-1},\mathbf{vec}_\mathbb{F})$. For $k\in\mathbb{N}_0$, if $\cev{D}^k(\vec{X},\mathcal{F})\in\mathbf{Fun}(\mathbb{N}_0^\text{op},\mathbf{vec}_\mathbb{F})$, then
\begin{equation*} %\label{366}
\text{BC}\big(\cev{D}^k(\vec{X},\mathcal{F})\big)=\text{BC}\big(H^k(X_{m-1},-)\circ \vec{\mathcal{G}}(\vec{X},\mathcal{F})\big)^{-1} \quad .
\end{equation*}
\end{corollary}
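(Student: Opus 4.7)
The plan is to deduce this corollary directly from Theorem \ref{112} by applying the mirroring equivalence $\mathbf{Fun}(\mathbf{[m]}^\text{op},\mathbf{A})\cong \mathbf{Fun}(\mathbf{[m]},\mathbf{A})$ and tracking how it acts on barcodes.

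First, I would observe that Theorem \ref{112} applies verbatim to the truncated situation $\vec{X}\in\mathbf{Fun}(\mathbf{[m]},\mathbf{Top})$ (identifying $\vec{X}$ with its extension $\vec{X}'\in\mathbf{Fun}(\overline{\mathbb{N}}_0,\mathbf{Top})$ stabilizing at $X_{m-1}$ as described in Section \ref{769}), yielding a natural isomorphism of copersistence modules
\begin{equation*}
H^k(X_{m-1},-)\circ \cev{\mathcal{G}}(\vec{X},\mathcal{F})\cong \cev{D}^k(\vec{X},\mathcal{F}) \quad \text{in } \mathbf{Fun}(\mathbf{[m]}^\text{op},\mathbf{vec}_\mathbb{F}).
\end{equation*}
Next, I would make precise the mirror equivalence $\mu\colon \mathbf{Fun}(\mathbf{[m]}^\text{op},\mathbf{vec}_\mathbb{F})\xrightarrow{\cong} \mathbf{Fun}(\mathbf{[m]},\mathbf{vec}_\mathbb{F})$ induced by the poset isomorphism $\mathbf{[m]}\cong \mathbf{[m]}^\text{op}$, $i\mapsto m-1-i$. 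By construction of $\vec{\mathcal{G}}(\vec{X},\mathcal{F})$ as the mirror image of $\cev{\mathcal{G}}(\vec{X},\mathcal{F})$, we have $\vec{\mathcal{G}}(\vec{X},\mathcal{F})=\mu\big(\cev{\mathcal{G}}(\vec{X},\mathcal{F})\big)$. Since post-composition with $H^k(X_{m-1},-)$ commutes with the reindexing $\mu$ (both are instances of post-composition, which commute strictly), applying $\mu$ to both sides gives
\begin{equation*}
H^k(X_{m-1},-)\circ \vec{\mathcal{G}}(\vec{X},\mathcal{F}) \cong \mu\big(\cev{D}^k(\vec{X},\mathcal{F})\big) \quad \text{in } \mathbf{Fun}(\mathbf{[m]},\mathbf{vec}_\mathbb{F}).
\end{equation*}

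Finally, it remains to verify the barcode transformation rule: for a (co)persistence module $\cev{D}$ with decomposition $\cev{D}\cong\bigoplus_{i\in I}\cev{I}_{[a_i,b_i]}$ (available by Remark \ref{151}), the mirror image satisfies $\mu(\cev{D})\cong\bigoplus_{i\in I}\vec{I}_{[m-1-b_i,m-1-a_i]}$, since $\mu(\cev{I}_{[a_i,b_i]})=\vec{I}_{[m-1-b_i,m-1-a_i]}$ holds by direct inspection at each index $j\in\mathbf{[m]}$. By the uniqueness of interval decompositions (Theorem \ref{817} and Remark \ref{151}), this yields the barcode identity
\begin{equation*}
\text{BC}\big(\mu(\cev{D}^k(\vec{X},\mathcal{F}))\big)=\text{BC}\big(\cev{D}^k(\vec{X},\mathcal{F})\big)^{-1},
\end{equation*}
which combined with the displayed isomorphism above gives the claim. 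The only mildly delicate step is verifying that $\mu$ indeed commutes with post-composition by a functor on the value category, but this is immediate from the definitions of functor precomposition and postcomposition; no real obstacle is expected, as the corollary is essentially a bookkeeping consequence of Theorem \ref{112} together with the reversal symmetry of finite linear diagrams.
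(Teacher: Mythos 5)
Your proposal is correct and follows essentially the same route as the paper: invoke Theorem \ref{112} to identify $\cev{D}^k(\vec{X},\mathcal{F})$ with $H^k(X_{m-1},-)\circ\cev{\mathcal{G}}(\vec{X},\mathcal{F})$, then observe that the mirror reindexing sends an interval $[a_i,b_i]$ to $[m-1-b_i,m-1-a_i]$ and appeal to uniqueness of interval decompositions. The paper states this more tersely, while you make the mirror equivalence $\mu$ and its compatibility with post-composition explicit, but there is no substantive difference.
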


\begin{proof}
By Theorem \ref{112}, $H^k(X_{m-1},-)\circ\cev{\mathcal{G}}(\vec{X},\mathcal{F})\cong\cev{D}^k(\vec{X},\mathcal{F})\in\mathbf{Fun}(\mathbb{N}_0^\text{op},\mathbf{vec}_\mathbb{F})$. Therefore, $H^k(X_{m-1},-)\circ \vec{\mathcal{G}}(\vec{X},\mathcal{F})\cong \bigoplus_{i\in I}\vec{I}_{[a_i,b_i]}$ implies $H^k(X_{m-1},-)\circ \cev{\mathcal{G}}(\vec{X},\mathcal{F})\cong \bigoplus_{i\in I}\cev{I}_{[m-1-b_i,m-1-a_i]}$.
\end{proof}

\noindent
Hence, in the finite case, the persistence barcode of a copersistence module of type T is the reflection of the persistence barcode of a corresponding persistence module of type A. 

Since $\mathbf{Fun}\big(\mathbb{N}_0^\text{op},\mathbf{Shv}(X,\mathbf{Mod}_R)\big) \cong \mathbf{Fun}\big(\mathbb{N}_0,\mathbf{Shv}(X,\mathbf{Mod}_R)^\text{op}\big)^\text{op}$ and $\mathbf{Shv}(X,\mathbf{Mod}_R)^\text{op}\cong \mathbf{coShv}(X,\mathbf{Mod}_R^\text{op})$ \cite{cosheaf}, the general case naturally leads us to cosheaves. The problem here is that the opposite category of modules is not necessarily a category of modules. If we consider a category of finite-dimensional vector spaces, we have the equivalence of categories $\text{dual}\colon \mathbf{vec}_\mathbb{F}^\text{op}\xrightarrow{\cong} \mathbf{vec}_\mathbb{F}$. This extends to equivalences of categories $V\colon\mathbf{Shv}(X,\mathbf{vec}_\mathbb{F})^\text{op}\xrightarrow{\cong}\mathbf{coShv}(X,\mathbf{vec}_\mathbb{F})$, defined by $V(-)\coloneqq\text{dual}\circ -$ and $\underline{V}\colon\mathbf{Fun}\big(\mathbb{N}_0^\text{op},\mathbf{Shv}(X,\mathbf{vec}_\mathbb{F})\big)^\text{op} \xrightarrow{\cong} \mathbf{Fun}\big(\mathbb{N}_0,\mathbf{coShv}(X,\mathbf{vec}_\mathbb{F})\big)$. Figure \ref{758} shows the functor $\underline{V}\big(\cev{G}(\vec{X},F)\big)$ of cellular cosheaves corresponding to the functor of cellular sheaves $\cev{G}(\vec{X},F)$ in Figure \ref{875} and the corresponding cosheaf persistence module of algebraic type $H_k(X_3,-)\circ \underline{V}\big(\cev{G}(\vec{X},F)\big)$.
\begin{figure}[h]
\centering
\includegraphics[scale=0.55]{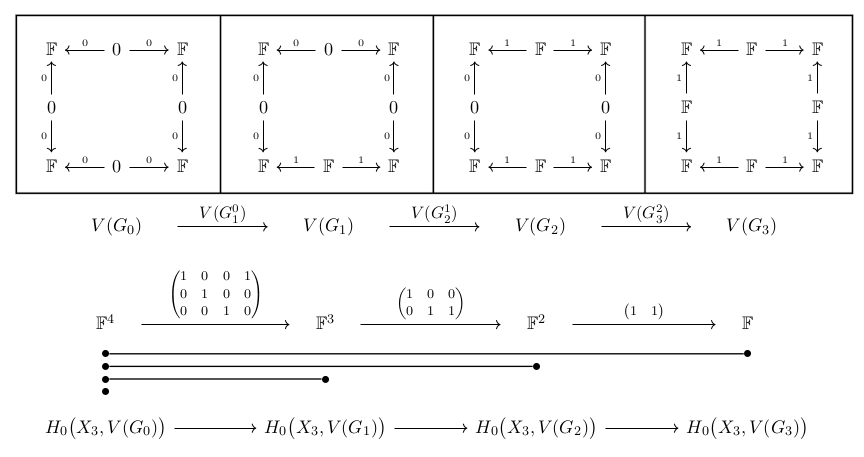}
\caption{The functor $\underline{V}\big(\cev{G}(\vec{X},F)\big)$ of cellular cosheaves corresponding to the functor of cellular sheaves $\cev{G}(\vec{X},F)$ in Figure \ref{875} and the corresponding cosheaf persistence module of algebraic type $H_k(X_3,-)\circ \underline{V}\big(\cev{G}(\vec{X},F)\big)$}
\label{758}
\end{figure} 
Note that the persistence barcodes of $H_k(X_3,-)\circ \underline{V}\big(\cev{G}(\vec{X},F)\big)$ and $H^k(X_3,-)\circ \cev{G}(\vec{X},F)$ agree. The following corollary states that this holds for any poset $X$ equipped with the Alexandrov topology. 
\begin{corollary} \label{866}
Let $\vec{X}\in\mathbf{Fun}(\overline{\mathbb{N}}_0,\mathbf{Top})$ be a functor such that $X_i$ is a poset equipped with the Alexandrov topology, $(X_i^j)_*$ is exact for all $i\leq j\in\overline{\mathbb{N}}_0$ and $\mathcal{F}\in\mathbf{Shv}(X_{\infty},\mathbf{vec}_\mathbb{F})$. Then, for all $k\in\mathbb{N}_0$,
\begin{equation*}
\cev{D}^k(\vec{X},\mathcal{F})\cong\text{dual}\Big(H_k(X_\infty,-)\circ \underline{V}\big(\cev{\mathcal{G}}(\vec{X},\mathcal{F})\big)\Big)
\end{equation*}
and, moreover,
\begin{equation*} %\label{583}
\text{BC}\big(\cev{D}^k(\vec{X},\mathcal{F})\big)=\text{BC}\Big(H_k(X_\infty,-)\circ \underline{V}\big(\cev{\mathcal{G}}(\vec{X},\mathcal{F})\big)\Big) \quad .
\end{equation*}
\end{corollary}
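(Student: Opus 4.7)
The plan is to reduce Corollary~\ref{866} to a pointwise duality between sheaf cohomology and cosheaf homology on the fixed space $X_\infty$, then combine this with Theorem~\ref{112} and Remark~\ref{151}.

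First, I would apply Theorem~\ref{112}: since $(X_i^j)_*$ is exact for all $i\leq j\in\overline{\mathbb{N}}_0$, the hypothesis yields an isomorphism of copersistence modules
\[
\cev{D}^k(\vec{X},\mathcal{F})\cong H^k(X_{\infty},-)\circ \cev{\mathcal{G}}(\vec{X},\mathcal{F}).
\]
This reduces the problem to comparing the copersistence module of sheaf cohomologies of $\cev{\mathcal{G}}(\vec{X},\mathcal{F})$ on the fixed space $X_\infty$ with the dual of a persistence module of cosheaf homologies on the same space, related by the dualization equivalence $V$.

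Second, I would establish a natural isomorphism $H^k(X_\infty,\mathcal{G})\cong \text{dual}\big(H_k(X_\infty,V(\mathcal{G}))\big)$ for all $\mathcal{G}\in\mathbf{Shv}(X_\infty,\mathbf{vec}_\mathbb{F})$. Since $X_\infty$ carries the Alexandrov topology on a poset, $\mathbf{Shv}(X_\infty,\mathbf{vec}_\mathbb{F})$ has enough injectives and $\mathbf{coShv}(X_\infty,\mathbf{vec}_\mathbb{F})$ has enough projectives (Remark~\ref{363}). Because $V\colon\mathbf{Shv}(X_\infty,\mathbf{vec}_\mathbb{F})^{\text{op}}\xrightarrow{\cong}\mathbf{coShv}(X_\infty,\mathbf{vec}_\mathbb{F})$ is an equivalence of categories, it exchanges injective with projective objects and sends an injective resolution $0\to\mathcal{G}\to\mathcal{I}^\bullet$ to a projective resolution $V(\mathcal{I}^\bullet)\to V(\mathcal{G})\to 0$. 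Checking that the sheaf and cosheaf global section functors are intertwined by dualization and using that $\text{dual}$ is exact on $\mathbf{vec}_\mathbb{F}$ would yield the desired isomorphism on the level of derived functors.

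Third, I would apply this duality pointwise along $\cev{\mathcal{G}}(\vec{X},\mathcal{F})$, using naturality to handle the transition morphisms and noting that the functor $\underline{V}$ reverses the orientation of the indexing category. This produces the claimed isomorphism
\[
\cev{D}^k(\vec{X},\mathcal{F})\cong \text{dual}\Big(H_k(X_\infty,-)\circ \underline{V}\big(\cev{\mathcal{G}}(\vec{X},\mathcal{F})\big)\Big).
\]
The barcode identity then follows from Remark~\ref{151}, which shows that $\text{dual}$ sends a persistence module interval decomposition to a copersistence module interval decomposition with identical multiset of intervals.

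The main obstacle will be the second step, specifically pinning down the precise relationship between the sheaf and cosheaf global section functors under the dualization equivalence $V$. In the cellular setting of a simplicial complex this is transparent, since both $H^k(X,F)$ and $H_k(X,V(F))$ are computed by finite-dimensional (co)chain complexes (cf.\ Example~\ref{374} and Remark~\ref{363}) whose duals coincide dimension-by-dimension. For a general poset with the Alexandrov topology, one has to argue either through the identification of (co)sheaves with diagrams of vector spaces on the poset category, or via an explicit resolution adapted to this topology, and then verify that the corresponding dualization squares commute naturally.
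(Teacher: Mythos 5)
Your proposal is correct and follows essentially the same route as the paper: reduce via Theorem \ref{112} to $H^k(X_\infty,-)\circ\cev{\mathcal{G}}(\vec{X},\mathcal{F})$, then invoke the pointwise duality $\text{dual}\circ H^k(X_\infty,-)\cong H_k(X_\infty,-)\circ V$ together with Remark \ref{151} for the barcode statement. The only difference is that the paper simply cites this duality (from Curry's thesis, Lemma 6.2.9) as a commutative square of functors, whereas you sketch a proof of it by transporting injective resolutions to projective resolutions along the equivalence $V$ and dualizing global (co)sections, which is a legitimate way to establish the same key lemma.
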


\begin{proof}
We have the following commutative diagram (cf. Remark \ref{363}) \cite[Lemma 6.2.9]{curry}
\begin{equation*}
\begin{tikzcd}
 \mathbf{Fun}\big(\mathbb{N}_0^\text{op},\mathbf{Shv}(X,\mathbf{vec}_\mathbb{F})\big)^\text{op} \arrow[r,"\underline{V}"] \arrow[d,swap,"\underline{H}^k(X\text{,}-)"] & \mathbf{Fun}\big(\mathbb{N}_0,\mathbf{coShv}(X,\mathbf{vec}_\mathbb{F})\big) \arrow[d,"\underline{H_k}(X\text{,}-)"] \\
\mathbf{Fun}(\mathbb{N}_0^\text{op},\mathbf{vec}_\mathbb{F})^\text{op} \arrow[r,"\underline{\text{dual}}"] & \mathbf{Fun}(\mathbb{N}_0,\mathbf{vec}_\mathbb{F})
\end{tikzcd}
\end{equation*}
i.e.\ $\underline{\text{dual}}\circ \underline{H}^k(X,-)\cong \underline{H_k}(X,-)\circ \underline{V}$.  Therefore, $\cev{D}^k(\vec{X},\mathcal{F})\cong H^k(X_\infty,-)\circ \cev{\mathcal{G}}(\vec{X},\mathcal{F})$ \\ $\cong\text{dual}\Big(H_k(X_\infty,-)\circ \underline{V}\big(\cev{\mathcal{G}}(\vec{X},\mathcal{F})\big)\Big)$.
\end{proof}

\noindent
Hence, a sheaf copersistence module of topological type is dual to a corresponding cosheaf persistence module of algebraic type. This allows us to apply the cosheaf version of the results in Section \ref{129}. For example, we can compute the persistence barcode of $\cev{D}^k(\vec{X},\mathcal{F})$ by computing the homology of the cosheaf of graded modules corresponding to $V\big(\cev{\mathcal{G}}(\vec{X},\mathcal{F})\big)$.

A common scenario where the functor $f_*$ is exact is when $f$ is the inclusion of a closed subspace \cite[page 102]{iversen}. This is applicable, for example, to filtrations of simplicial complexes. If we view a simplicial complex as a topological space with the Alexandrov topology, then a subcomplex is a closed subspace. Hence, if $\vec{X}\in\mathbf{Fun}(\mathbf{[m]},\mathbf{Top})$ is a functor such that, for all $i$, $X_i$ is a simplicial complex (with the Alexandrov topology), $X_i^{i+1}$ is a simplicial inclusion and $\mathcal{F}\in\mathbf{Shv}(X_{m-1},\mathbf{Mod}_R)$, then $(X_i^{i+1})_* $ is exact and $H^k(X_{m-1},-)\circ \cev{\mathcal{G}}(\vec{X},\mathcal{F})\cong \cev{D}^k(\vec{X},\mathcal{F})$, for all $k\in\mathbb{N}_0$. We already saw an instance where this result applies in the example in Figure \ref{875}.

Given a finite filtration of simplicial complexes $\vec{X}\in\mathbf{Fun}(\mathbf{[m]},\mathbf{Top})$ and a sheaf $\mathcal{F}\in\mathbf{Shv}(X_{m-1},\mathbf{vec}_\mathbb{F})$, corollary \ref{139} and \ref{866} allow us to reduce the computation of type T persistent sheaf cohomology to the computation of type A persistent sheaf cohomology or type A persistent cosheaf homology, respectively. We want to apply the method of computation discussed in the end of Section \ref{819}. Unfortunately, by using the formula of $\eta_\mathcal{F}$ derived in the proof of Theorem \ref{546}, we obtain that, for a filtration of simplicial complexes, the linear maps $(\mathcal{G}_i^{i-1})_\sigma$, with $\mathcal{G}_i^{i-1}$ as defined above, are surjective but in general not injective. Hence, $\vec{\mathcal{G}}(\vec{X},\mathcal{F})$ not necessarily corresponds to a sheaf of free graded modules. Fortunately, the linear maps $V\big(\mathcal{G}_i^{i-1}\big)_\sigma$ are injective and $\underline{V}\big(\cev{\mathcal{G}}(\vec{X},\mathcal{F})\big)$ is a filtration of cosheaves. Therefore, there is a corresponding cosheaf of free graded modules and we can compute the persistence barcode of $\cev{D}^k(\vec{X},\mathcal{F})$ by (graded) matrix reduction. Figure \ref{214} shows the cosheaf of graded modules on $X_3$ corresponding to $V\big(\cev{\mathcal{G}}(\vec{X},\mathcal{F})\big)$, with $\cev{\mathcal{G}}(\vec{X},\mathcal{F})$ as in Figure \ref{875}. 
\begin{figure}[h]
\centering
\begin{tikzcd}[row sep=small,column sep=small]
\bullet \arrow[rr,dash,shorten <= -.5em,shorten >= -.5em,thick] &[20pt] &[20pt] \bullet &[7pt] &[7pt] \mathbb{F}[t]_0 \arrow[r,<-,"t^2"] \arrow[d,<-,swap,"t^3"] &[7pt] \mathbb{F}[t]_2 &[7pt] \arrow[l,<-,swap,"t^2"] \mathbb{F}[t]_0 \arrow[d,<-,"t^3"] \\[7pt]
 & & & & \mathbb{F}[t]_3 &  & \mathbb{F}[t]_3 \\[7pt]
\bullet \arrow[rr,dash,shorten <= -.5em,shorten >= -.5em,thick] \arrow[uu,dash,shorten <= -.5em,shorten >= -.5em,thick] & & \bullet \arrow[uu,dash,shorten <= -.5em,shorten >= -.5em,thick] & & \mathbb{F}[t]_0 \arrow[r,<-,"t"] \arrow[u,<-,"t^3"] & \mathbb{F}[t]_1 & \arrow[l,<-,swap,"t"] \mathbb{F}[t]_0 \arrow[u,<-,swap,"t^3"] 
\end{tikzcd}
\caption{The cosheaf of graded $\mathbb{F}[t]$-modules on $X_3$ corresponding to the functor $\underline{V}\big(\cev{\mathcal{G}}(\vec{X},\mathcal{F})\big)$, with $\cev{\mathcal{G}}(\vec{X},\mathcal{F})$ as in Figure \ref{875}.}
\label{214}
\end{figure} 
The homology of the cosheaf in Figure \ref{214} corresponds to the persistent cohomology of the simplicial filtration in Figure \ref{875}. Hence, these results open up the following new perspective on persistent cohomology of simplicial filtrations:
\begin{itemize}
\item A filtered simplicial complex $X$ corresponds to a (co)sheaf of graded modules on $X$.
\item The persistent simplicial cohomology of a filtered simplicial complex $X$ is given by the (co)homology of the corresponding (co)sheaf of graded modules. 
\end{itemize}

\subsection{Combination of (co)persistence modules of type A and T} \label{911}

In the previous sections we constructed two versions of sheaf (co)persistence modules. In the first version the sheaves are variable and the topological space is fixed. In the second version we have in some sense a fixed sheaf and the topological spaces are variable. In this section we show that it is possible to combine both approaches. The combined approach is based on the following theorem. This result is probably well-known but for completeness we provide a proof in the appendix.

\begin{theorem} \label{511}
Let $f\colon X\rightarrow Y$ be a continuous map and $\phi\colon\mathcal{F}\rightarrow \mathcal{G}$ a morphism of sheaves on $Y$. Then, for all $k\in\mathbb{N}_0$, the following diagram commutes
\begin{equation*} %\label{466}
\begin{tikzcd}[column sep=huge]
H^k(Y,\mathcal{F}) \arrow[r,"H^k(Y\text{,}\phi)"] \arrow[d,swap,"H^k(f\text{,}\mathcal{F})"] & H^k(Y,\mathcal{G}) \arrow[d,"H^k(f\text{,}\mathcal{G})"] \\
H^k(X,f^*\mathcal{F}) \arrow[r,"H^k(X\text{,}f^*\phi)"] & H^k(X,f^*\mathcal{G}) 
\end{tikzcd}
\end{equation*}
i.e.\ the morphisms induced by continuous maps and sheaf morphisms commute.
\end{theorem}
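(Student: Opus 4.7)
The plan is to mimic the strategy used in the proof of Proposition \ref{452} and to combine two independent inputs: naturality of the unit of the adjunction $f^{*}\dashv f_{*}$, and the fact, coming from Theorem \ref{171}, that any two cochain lifts of a given morphism between injective resolutions differ by a chain homotopy.

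First, I would set up all the needed resolutions and lifts. Choose injective resolutions $\mathcal{F}\rightarrow \mathcal{I}^\bullet$ and $\mathcal{G}\rightarrow\tilde{\mathcal{I}}^\bullet$ on $Y$, together with a cochain lift $\phi^\bullet\colon\mathcal{I}^\bullet\rightarrow\tilde{\mathcal{I}}^\bullet$ of $\phi$ guaranteed by Theorem \ref{171}. On $X$, choose injective resolutions $f^{*}\mathcal{F}\rightarrow\mathcal{J}^\bullet$ and $f^{*}\mathcal{G}\rightarrow\tilde{\mathcal{J}}^\bullet$, a cochain lift $\psi^\bullet\colon\mathcal{J}^\bullet\rightarrow\tilde{\mathcal{J}}^\bullet$ of $f^{*}\phi$, and cochain morphisms $g^\bullet\colon f^{*}\mathcal{I}^\bullet\rightarrow \mathcal{J}^\bullet$ and $\tilde{g}^\bullet\colon f^{*}\tilde{\mathcal{I}}^\bullet\rightarrow \tilde{\mathcal{J}}^\bullet$ lifting the identities on $f^{*}\mathcal{F}$ and $f^{*}\mathcal{G}$, respectively (here I use that $f^{*}$ is exact by Theorem \ref{573}, so $f^{*}\mathcal{I}^\bullet$ and $f^{*}\tilde{\mathcal{I}}^\bullet$ are still resolutions). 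By the construction recalled in the proof of Proposition \ref{452}, on cohomology one has $H^k(f,\mathcal{F})=H^k\big(\Gamma(X,g^\bullet)\big)\circ H^k\big(\Gamma(Y,\eta^\bullet_\mathcal{I})\big)$ and analogously for $\mathcal{G}$, where $\eta^\bullet_{\mathcal{I}}\colon\mathcal{I}^\bullet\rightarrow f_{*}f^{*}\mathcal{I}^\bullet$ is the cochain map induced by the unit of adjunction.

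The core of the argument is then to stack two squares at the cochain level. The upper square
\begin{equation*}
\begin{tikzcd}[column sep=large]
\mathcal{I}^\bullet \arrow[r,"\phi^\bullet"] \arrow[d,swap,"\eta^\bullet_\mathcal{I}"] & \tilde{\mathcal{I}}^\bullet \arrow[d,"\eta^\bullet_{\tilde{\mathcal{I}}}"] \\
f_{*}f^{*}\mathcal{I}^\bullet \arrow[r,"f_{*}f^{*}\phi^\bullet"] & f_{*}f^{*}\tilde{\mathcal{I}}^\bullet
\end{tikzcd}
\end{equation*}
commutes on the nose, degree by degree, by naturality of the unit $\eta\colon\mathrm{id}\rightarrow f_{*}f^{*}$. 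The lower square
\begin{equation*}
\begin{tikzcd}[column sep=large]
f^{*}\mathcal{I}^\bullet \arrow[r,"f^{*}\phi^\bullet"] \arrow[d,swap,"g^\bullet"] & f^{*}\tilde{\mathcal{I}}^\bullet \arrow[d,"\tilde{g}^\bullet"] \\
\mathcal{J}^\bullet \arrow[r,"\psi^\bullet"] & \tilde{\mathcal{J}}^\bullet
\end{tikzcd}
\end{equation*}
only commutes up to chain homotopy: both $\tilde{g}^\bullet\circ f^{*}\phi^\bullet$ and $\psi^\bullet\circ g^\bullet$ are cochain lifts of the morphism $f^{*}\phi\colon f^{*}\mathcal{F}\rightarrow f^{*}\mathcal{G}$ between the injective resolutions $\mathcal{J}^\bullet$ and $\tilde{\mathcal{J}}^\bullet$, so they are chain homotopic by Theorem \ref{171}.

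To finish, I would apply $\Gamma(Y,-)$ to the upper square and $\Gamma(X,-)$ to the lower one, use the tautological identification $\Gamma(Y,f_{*}-)=\Gamma(X,-)$ to paste them into a single diagram of cochain complexes from $\Gamma(Y,\mathcal{I}^\bullet)$ to $\Gamma(X,\tilde{\mathcal{J}}^\bullet)$ whose two routes agree up to chain homotopy, and then pass to $H^k$. The top horizontal becomes $H^k(Y,\phi)$, the bottom horizontal becomes $H^k(X,f^{*}\phi)$, and the two vertical composites are by construction $H^k(f,\mathcal{F})$ and $H^k(f,\mathcal{G})$, yielding the desired commutative square. I do not anticipate a serious obstacle: the whole argument is a bookkeeping diagram chase combining naturality of $\eta$ with the comparison theorem for injective resolutions, in the same spirit as the proof of Proposition \ref{452}. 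The only subtle point is keeping careful track of which squares commute strictly and which only up to chain homotopy, but the latter cease to matter once one descends to $H^k$.
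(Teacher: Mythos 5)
Your proposal is correct and follows essentially the same route as the paper: the strictly commuting square is the naturality of the unit $\eta\colon\mathrm{id}\rightarrow f_{*}f^{*}$ applied to a lift of $\phi$ between injective resolutions on $Y$, the homotopy-commuting square comes from the uniqueness up to homotopy (Theorem \ref{171}) of lifts of $f^{*}\phi$ into an injective resolution of $f^{*}\mathcal{G}$, and the two are pasted via $\Gamma(Y,f_{*}-)=\Gamma(X,-)$ before passing to cohomology. The only point you state implicitly that the paper makes explicit is that $\Gamma$ is additive and hence preserves chain homotopies, so the homotopy-commutativity survives taking global sections.
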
 

\begin{proof}
See Section \ref{295}.
\end{proof}

\noindent
Consider functors $\vec{X}\in\mathbf{Fun}(\overline{\mathbb{N}}_0,\mathbf{Top})$ and $\vec{\mathcal{F}}\in\mathbf{Fun}\big(\mathbb{N}_0,\mathbf{Shv}(X_\infty,\mathbf{Mod}_R)\big)$ as depicted in the following diagram
\begin{equation*} %\label{220}
\begin{tikzcd}[column sep=large]
\mathcal{F}_0 \arrow[r,"\mathcal{F}_0^1"] & \mathcal{F}_1 \arrow[rr,"\mathcal{F}_1^2"] &[-32pt] &[-32pt] \mathcal{F}_2 \arrow[r,"\mathcal{F}_2^3"] & \cdots  \\[-20pt]
& & X_\infty \\[5pt]
X_0 \arrow[r,swap,"X_0^1"] \arrow[urr,"X_0^\infty"] & X_1 \arrow[rr,swap,"X_1^2"] \arrow[ur,"X_1^\infty"{yshift=-7pt}] & & X_2 \arrow[r,swap,"X_{2}^3"] \arrow[ul,"X_2^\infty"] & \cdots \arrow[ull]
\end{tikzcd} \quad .
\end{equation*}
In Section \ref{769}, we defined sheaves $\mathcal{F}^i$ on $X_i$ by pulling back a fixed sheaf $\mathcal{F}$ on $X_\infty$. In the same way we can pull back a whole functor $\vec{\mathcal{F}}$. For all $i\in\mathbb{N}_0$, we define functors $\vec{\mathcal{F}}^i\in\mathbf{Fun}\big(\mathbb{N}_0,\mathbf{Shv}(X_i,\mathbf{Mod}_R)\big)$ by $\vec{\mathcal{F}}^i\coloneqq(X_i^\infty)^*\vec{\mathcal{F}}$. This means that $(\mathcal{F}^i)_j\coloneqq (X_i^\infty)^*\mathcal{F}_j$ and $(\mathcal{F}^i)_j^{j+1}\coloneqq (X_i^\infty)^*\mathcal{F}_j^{j+1}\colon (X_i^\infty)^*\mathcal{F}_j\rightarrow (X_i^\infty)^*\mathcal{F}_{j+1}$ for all $j \in \mathbb{N}_0$. Therefore, for every $i\in\mathbb{N}_0$, we obtain the following diagram of sheaves on $X_i$ 
\begin{equation*} %\label{166}
\begin{tikzcd}[column sep=large]
(\mathcal{F}^i)_0 \arrow[r,"(\mathcal{F}^i)_0^1"] & (\mathcal{F}^i)_1 \arrow[r,"(\mathcal{F}^i)_1^2"] & (\mathcal{F}^i)_2 \arrow[r,"(\mathcal{F}^i)_2^3"] & \cdots
\end{tikzcd}
\end{equation*}
and the sheaf persistence module of algebraic type $H^k(X_i,-)\circ\vec{\mathcal{F}}^i$ :
\begin{equation*} \label{400}
\begin{tikzcd}[column sep=huge]
H^k\big(X_i,(\mathcal{F}^i)_0\big) \arrow[r,"H^k(X_i\text{,}(\mathcal{F}^i)_0^1)"] &[10pt]  H^k\big(X_i,(\mathcal{F}^i)_1\big) \arrow[r,"H^k(X_i\text{,}(\mathcal{F}^i)_1^2)"] &[10pt]  H^k\big(X_i,(\mathcal{F}^i)_2\big) \arrow[r,"H^k(X_i\text{,}(\mathcal{F}^i)_2^3)"] &[10pt]  \cdots
\end{tikzcd} \, .
\end{equation*}

\noindent
Moreover, for every $j\in\mathbb{N}_0$, we obtain the formal diagram of topological spaces equipped with sheaves
\begin{equation*} %\label{996}
\begin{tikzcd}[column sep=large]
(\mathcal{F}^0)_j & \arrow[l,swap,"(X_0^1)^*",maps to] (\mathcal{F}^1)_j & \arrow[l,swap,"(X_1^2)^*",maps to] (\mathcal{F}^2)_j & \arrow[l,swap,"(X_2^3)^*",maps to] \cdots \\[-20pt]
X_0 \arrow[r,"X_0^1"] & X_1 \arrow[r,"X_1^2"] & X_2 \arrow[r,"X_2^3"] & \cdots 
\end{tikzcd}
\end{equation*}
and the sheaf copersistence module of topological type  $\cev{D}^k(\vec{X},\mathcal{F}_j)$
\begin{equation*} \label{668} 
\begin{tikzcd}[column sep=huge]
H^k\big(X_{0},(\mathcal{F}^0)_j\big) &[10pt] \arrow[l,swap,"H^k(X_0^1\text{,}(\mathcal{F}^1)_j)"] H^k\big(X_{1},(\mathcal{F}^1)_j\big) &[10pt]  \arrow[l,swap,"H^k(X_1^2\text{,}(\mathcal{F}^2)_j)"] H^k\big(X_{2},(\mathcal{F}^2)_j\big) &[10pt]  \arrow[l,swap,"H^k(X_2^3\text{,}(\mathcal{F}^3)_j)"] \cdots \quad .
\end{tikzcd}
\end{equation*}
Now we combine $H^k(X_i,-)\circ\vec{\mathcal{F}}^i$ and $\cev{D}^k(\vec{X},\mathcal{F}_j)$, for all $i,j\in\mathbb{N}_0$, to obtain the following two-dimensional diagram,
\begin{equation} \label{644}
\begin{tikzcd}
\vdots \arrow[d] & \vdots \arrow[d] & \vdots \arrow[d] \\
H^k\big(X_2,(\mathcal{F}^2)_0\big) \arrow[r] \arrow[d] & H^k\big(X_2,(\mathcal{F}^2)_1\big) \arrow[r] \arrow[d] & H^k\big(X_2,(\mathcal{F}^2)_2\big) \arrow[r] \arrow[d] & \cdots \\
H^k\big(X_1,(\mathcal{F}^1)_0\big) \arrow[r] \arrow[d] & H^k\big(X_1,(\mathcal{F}^1)_1\big) \arrow[r] \arrow[d] & H^k\big(X_1,(\mathcal{F}^1)_2\big) \arrow[r] \arrow[d] & \cdots \\
H^k\big(X_0,(\mathcal{F}^0)_0\big) \arrow[r] & H^k\big(X_0,(\mathcal{F}^0)_1\big) \arrow[r] & H^k\big(X_0,(\mathcal{F}^0)_2\big) \arrow[r] &  \cdots
\end{tikzcd}
\end{equation}
which is commutative by Theorem \ref{511}. The diagram in (\ref{644}) represents a functor in $\mathbf{Fun}(\mathbb{N}_0\times\mathbb{N}_0^\text{op},\mathbf{Mod}_R)$, where $\mathbb{N}_0\times\mathbb{N}_0^\text{op}$ denotes the category corresponding to the underlying poset with the product order. 

If $\vec{X}\in\mathbf{Fun}(\mathbf{[m]},\mathbf{Top})$ and $\vec{\mathcal{F}}\in\mathbf{Fun}(\mathbf{[n]},\mathbf{Shv}(X_{m-1},\mathbf{vec}_\mathbb{F}))$, we obtain a diagram similar to (\ref{644}) of finite size. If we assume finite-dimensional cohomology, such a diagram can be viewed as a functor in $\mathbf{Fun}(\mathbf{[m]}\times\mathbf{[n]},\mathbf{vec}_\mathbb{F})$, i.e.\ a finite two-dimensional persistence module. The category $\mathbf{Fun}(\mathbf{[m]}\times\mathbf{[n]},\mathbf{vec}_\mathbb{F})$ is studied extensively in the field of multiparameter persistent homology (see for example \cite{buchet,botnan,carlsson2}) and the general methods to investigate two-dimensional persistence modules can be applied unaltered in our situation. 

If the maps $X_i^{i+1}$ satisfy the conditions of Theorem \ref{112}, the two-dimensional persistence module obtained from the combined construction can be viewed as the two-dimensional sheaf persistence module of algebraic type corresponding to a functor $\mathbf{[m]}\times \mathbf{[n]}\rightarrow \mathbf{Shv}(X_{m-1},\mathbf{vec}_\mathbb{F})$. In general, this construction is different from the straightforward generalization of sheaf (co)persistence modules to the two-dimensional case because the two directions of variation are of different nature. In one direction the topological spaces on which the sheaves are defined vary and in the other direction the sheaves on the respective topological spaces vary. Therefore, the combination of our two constructions defines a new kind of two-dimensional persistence module with a topological and an algebraic dimension. 

\section{Examples} \label{563}

\subsection{Type A} \label{334}

We discuss a construction of functors $\vec{F}\colon\mathbf{[n]}\rightarrow \mathbf{Shv}(X,\mathbf{vec}_\mathbb{F})$, compute the type A persistent sheaf cohomology of these functors and interpret the obtained persistence barcodes. 

Recall the usual pipeline of persistent homology applications: Given a data set $P$, construct a simplicial filtration over $P$, compute the persistent homology of this filtration and extract homological information from the corresponding barcode. There is a natural way to extend this method to a more general setting using sheaf theory. 

We start with the non-persistent case. Instead of just considering a simplicial complex $K$, we consider a triple $(K,L,f)$ consisting of two simplicial complexes $K$ and $L$ and a simplicial map $f\colon K\rightarrow L$. For every $\tau\in L$, we define a subcomplex of $K$ by $\tau_f\coloneqq\{\sigma\in K\text{ }|\text{ }f(\sigma)\leq \tau\}$. Note that $\tau\leq \kappa\in L$ implies $\tau_f\subseteq \kappa_f\subseteq K$. Hence, for every $n\in\mathbb{N}_0$, we can construct a cellular sheaf of $\mathbb{F}$-vector spaces $F^n_K$ on the simplicial complex $L$ that assigns each simplex $\tau\in L$ the simplicial homology of $\tau_f$:
\begin{equation*} %\label{380}
\begin{aligned} 
& F^n_K(\tau)\coloneqq H_n(\tau_f,\mathbb{F}) \quad \quad \quad \quad \quad \quad \forall \tau\in L \\
& F^n_K(\tau\rightarrow \kappa)\coloneqq H_n(\tau_f\xhookrightarrow{} \kappa_f,\mathbb{F}) \quad \text{ } \forall \tau\leq\kappa\in L \, .
\end{aligned}
\end{equation*}
Since a simplicial map is determined by an assignment of vertices, one can view a map to a complete $(r-1)$-simplex as an assignment of every vertex in $K$ to one of $r$ classes. The sheaf $F_K^n$ stores the information of the $n$-dimensional homology of these classes and their unions and how the homology of different classes relate to each other. Figure \ref{964} shows the sheaf $F^n_K$ on a $0-$, $1-$ and $2-$simplex where $K_x$ denotes the set of all simplices of $K$ labeled by a letter in $x$.
\begin{figure}[h] 
\centering
\includegraphics[scale=0.4]{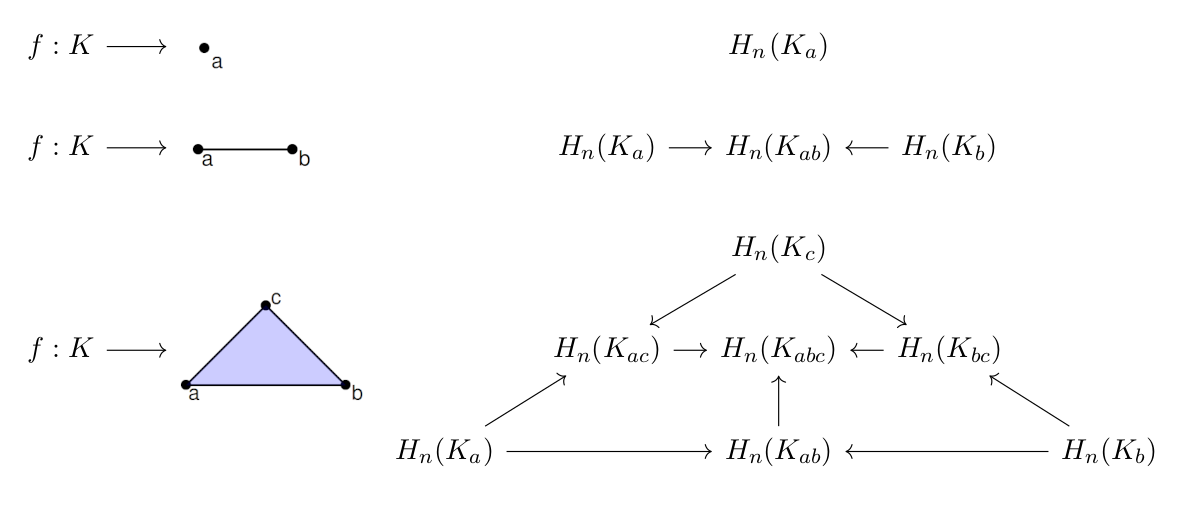}
\caption{Examples of $F^n_K$ for $L$ being a 0-, 1- or 2-simplex. Here $K_x$ denotes the set of all simplices of $K$ labeled by a letter in $x$.}
\label{964}
\end{figure} 

\noindent
Now we want to investigate the cohomology of the sheaves $F^n_K$. As an example, we consider simplicial maps of the form
\begin{figure}[t] 
\centering
\includegraphics[scale=0.37]{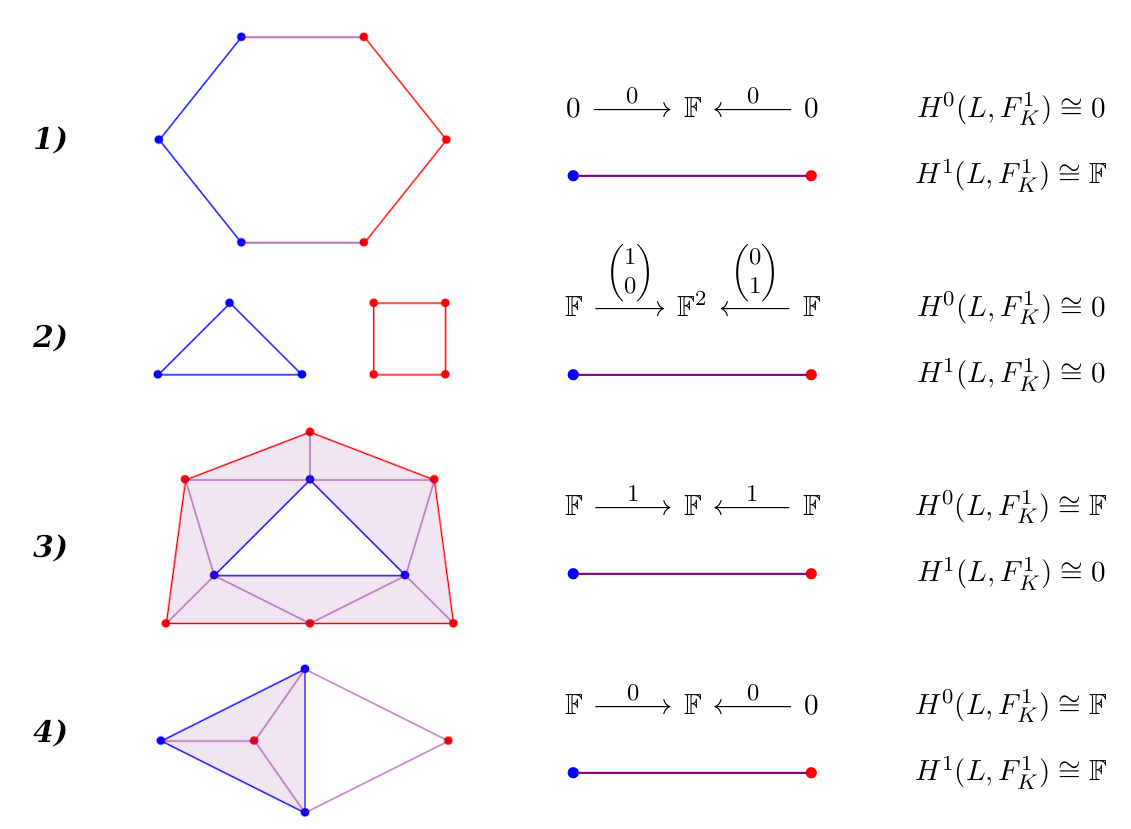}
\caption{The left column shows four instances of $(K,L,f)$, where the colors of the simplices encode the map $f$. The middle column shows the sheaves $F^1_K$ on $L$ and the right column shows their cohomology.}
\label{988}
\end{figure}
\begin{equation*} %\label{625}
\begin{tikzcd} 
f\colon K \arrow[r] & L\quad = \quad \color{blue} \bullet \arrow[rr,dash,shorten <=-5.5pt,shorten >=-5.5pt,thick,violet] &[-5pt] &[-5pt] \color{red} \bullet \quad \color{black}.
\end{tikzcd}
\end{equation*}
These maps correspond to labelings of the vertices of $K$ by two labels $\{b,r\}$. Figure \ref{988} shows four examples of $(K,L,f)$. How can we interpret the cohomology of the sheaves $F^1_K$? To answer this question, we discuss the four examples in Figure \ref{988}. The sheaf $F^1_K$ collects the one-dimensional homological features of the blue and red simplices on the vertices of $L$ and the one-dimensional homological features of the whole complex $K$ on the edge of $L$. 
\begin{enumerate}
\item There are no cycles in $K_b$ or $K_r$, i.e.\ there are no cycles on the level of vertices of $L$. But if we go up to the level of edges in $L$, the red and the blue piece connect to form a cycle in $K$, i.e.\ we gain a one-dimensional feature.
\item There is a cycle in $K_b$ and a cycle in $K_r$, i.e.\ there are two cycles on the level of vertices. If we go up to the level of edges, there are still the same two cycles. So we do not loose any features on the level of vertices and do not gain any features on the level of edges.
\item There is again a cycle in $K_b$ and another cycle in $K_r$ but this time if we go from the level of vertices to the level of edges, the cycles in $K_b$ and $K_r$ merge to a single cycle in $K$. Therefore, we loose one cycle on the level of edges. 
\item There is a cycle in $K_b$ and no cycle in $K_r$ on the level of vertices of $L$. If we go to the level of edges, the blue cycle is destroyed but a new cycle consisting of red and blue points emerges, i.e.\ we loose a cycle on the level of vertices and gain a cycle on the level of edges. 
\end{enumerate}
From these observations, we obtain the following interpretation. The dimension of the zero-dimensional sheaf cohomology of $F^1_K$ is the number of cycles on the level of vertices that die if we go to the level of edges. In other words, it describes the number of cycles that exist in the individual classes $K_b$ and $K_r$ but not in $K$. The dimension of the one-dimensional sheaf cohomology describes the number of cycles we gain by going from the level of vertices to the level of edges, i.e.\ the number of cycles in $K$ that need red and blue points to get closed. 

We can now lift this construction to filtrations of simplicial complexes or, more generally, commutative diagrams of the form: 
\begin{equation} \label{793}
\begin{tikzcd} [column sep=large, row sep=large]
K_0 \arrow[r,"K_0^1"] \arrow[drr,swap,"f_0"] & K_1 \arrow[r,"K_1^2"] \arrow[dr,"f_1"] & \cdots \arrow[r,"K_{m-3}^{m-2}"] & K_{m-2} \arrow[r,"K_{m-2}^{m-1}"] \arrow[dl,swap,"f_{m-2}"] & K_{m-1} \arrow[dll,"f_{m-1}"] \\
& & L
\end{tikzcd}
\end{equation}
We construct a sheaf morphism $\phi^n_i\colon F^n_{K_{i}}\rightarrow F^n_{K_{i+1}}$ by defining $(\phi^n_i)_\tau\colon F^n_{K_{i}}(\tau)\rightarrow F^n_{K_{i+1}}(\tau)$ as  
\begin{equation*} %\label{500}
(\phi^n_i)_\tau\coloneqq H_n(K_i^{i+1}|_{\tau_{f_i}},\mathbb{F})\colon H_n(\tau_{f_i},\mathbb{F})\rightarrow H_n(\tau_{f_{i+1}},\mathbb{F})
\end{equation*}
for all $\tau\in L $ and $0\leq i<m-1$. The sheaves $F_{K_i}^n$ together with the sheaf morphisms $\phi^n_i$ define the following functor in $\mathbf{Fun}\big(\mathbf{[m]},\mathbf{Shv}(L,\mathbf{vec}_\mathbb{F})\big)$ : 
\begin{equation} \label{740}
\begin{tikzcd} [column sep=large, row sep=large]
F^n_{K_0} \arrow[r,"\phi^n_0"] & F^n_{K_1} \arrow[r,"\phi^n_1"] & \cdots \arrow[r,"\phi^n_{m-3}"] & F^n_{K_{m-2}} \arrow[r,"\phi^n_{m-2}"] & F^n_{K_{m-1}} \quad .
\end{tikzcd}
\end{equation}

\noindent
A diagram as in (\ref{793}) can be obtained in the following way. Suppose we are given a labeled data set, i.e.\ a data set $P$ together with a function $f\colon P\rightarrow \{0,\ldots,r-1\}$ assigning each data point one of $r$ classes (this is very common in practice). To apply persistence theory, we construct a filtered simplicial complex on $P$ by using, for example, the $\check{\text{C}}$ech or Vietoris-Rips construction. The map $f$ then induces a simplicial map from every simplicial complex with vertex set $P$ to the complete $(r-1)$-simplex that commutes with inclusions, i.e.\ we obtain a diagram as in (\ref{793}) for the simplicial filtration on $P$. Note that, in the case $r=1$, we obtain just a single point for $L$ and the construction (\ref{740}) yields the usual persistence module of $n$-dimensional homology. In this case, the zero-dimensional persistent sheaf cohomology yields usual persistent homology. Therefore, this construction can be viewed as a generalisation of the construction of usual persistent homology. Consider the example of a simplicial filtration and a simplicial map to a $1$-simplex, encoded by the colors of the vertices, shown in Figure \ref{331}.
\begin{figure}[h] 
\centering
\includegraphics[scale=0.252]{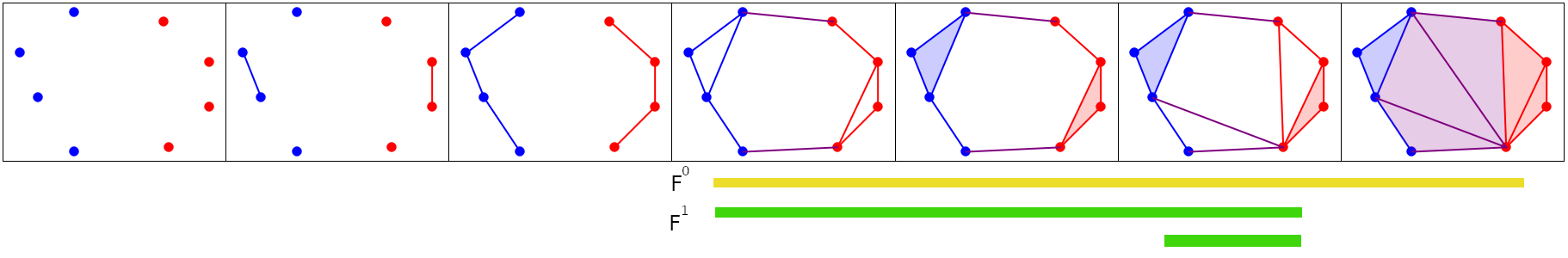}
\caption{A filtration of a labeled simplicial complex and the corresponding persistent sheaf cohomology barcodes of $F^0$ and $F^1$.}
\label{331}
\end{figure}
The explicit representations of (\ref{740}) in the case $n=0$ and $n=1$ for the filtration depicted in Figure \ref{331} are shown in Figure \ref{590} and \ref{628}, respectively, where 
\begin{equation*} \tiny %\label{432}  
A=\begin{pmatrix} 1 \quad 0 \quad 0 \quad 0 \quad 0 \quad 0 \quad 0 \quad 0 \\ 0 \quad 1 \quad 1 \quad 0 \quad 0 \quad 0 \quad 0 \quad 0 \\ 0 \quad 0 \quad 0 \quad 1 \quad 0 \quad 0 \quad 0 \quad 0 \\ 0 \quad 0 \quad 0 \quad 0 \quad 1 \quad 0 \quad 0 \quad 0 \\ 0 \quad 0 \quad 0 \quad 0 \quad 0 \quad 1 \quad 1 \quad 0 \\ 0 \quad 0 \quad 0 \quad 0 \quad 0 \quad 0 \quad 0 \quad 1 \end{pmatrix} \normalsize \quad\text{and}\quad \tiny \quad  B=\begin{pmatrix} 1 \quad 1 \quad 1 \quad 0 \quad 0 \quad 0 \\ 0 \quad 0 \quad 0 \quad 1 \quad 1 \quad 1 \end{pmatrix} \quad . \normalsize
\end{equation*} 
\begin{figure}[h] 
\centering
\begin{tikzcd}[row sep=large, column sep=large, every label/.append style = {font = \tiny}]
&[-35pt] \mathbb{F}^4 \arrow[r,"\begin{pmatrix} 1 \quad 0 \quad 0 \quad 0 \\ 0 \quad 1 \quad 1 \quad 0 \\ 0 \quad 0 \quad 0 \quad 1 \end{pmatrix}"] \arrow[d,"\begin{pmatrix} \text{id} \\ 0 \end{pmatrix}"] & \mathbb{F}^3 \arrow[r,"\begin{pmatrix} 1 \quad 1 \quad 1 \end{pmatrix}"] \arrow[d,"\begin{pmatrix} \text{id} \\ 0 \end{pmatrix}"] & \mathbb{F} \arrow[r,"1"] \arrow[d,"\begin{pmatrix} 1 \\ 0 \end{pmatrix}"] & \mathbb{F} \arrow[r,"1"] \arrow[d,"1"] & \mathbb{F} \arrow[r,"1"] \arrow[d,"1"] & \mathbb{F} \arrow[r,"1"] \arrow[d,"1"] & \mathbb{F} \arrow[d,"1"] \\
F^0_{K_{-}}: & \mathbb{F}^8 \arrow[r,"A"] & \mathbb{F}^6 \arrow[r,"B"] & \mathbb{F}^2 \arrow[r,"\begin{pmatrix} 1 \quad 1 \end{pmatrix}"] & \mathbb{F} \arrow[r,"1"] & \mathbb{F} \arrow[r,"1"] & \mathbb{F} \arrow[r,"1"] & \mathbb{F} \\
& \mathbb{F}^4 \arrow[r,"\begin{pmatrix} 1 \quad 0 \quad 0 \quad 0 \\ 0 \quad 1 \quad 1 \quad 0 \\ 0 \quad 0 \quad 0 \quad 1 \end{pmatrix}"] \arrow[u,swap,"\begin{pmatrix} 0 \\ \text{id} \end{pmatrix}"{yshift=7.5pt}] & \mathbb{F}^3 \arrow[r,"\begin{pmatrix} 1 \quad 1 \quad 1 \end{pmatrix}"] \arrow[u,swap,"\begin{pmatrix} 0 \\ \text{id} \end{pmatrix}"] & \mathbb{F} \arrow[r,"1"] \arrow[u,swap,"\begin{pmatrix} 0 \\ 1 \end{pmatrix}"] & \mathbb{F} \arrow[r,"1"] \arrow[u,swap,"1"] & \mathbb{F} \arrow[r,"1"] \arrow[u,swap,"1"] & \mathbb{F} \arrow[r,"1"] \arrow[u,swap,"1"] & \mathbb{F} \arrow[u,swap,"1"] \\[-10pt]
H^0(L,-): & 0 \arrow[r,"0"] & 0 \arrow[r,"0"] & 0 \arrow[r,"0"] & \mathbb{F} \arrow[r,"1"] & \mathbb{F} \arrow[r,"1"] & \mathbb{F} \arrow[r,"1"] & \mathbb{F} \\[-30pt]
& & & & \bullet \arrow[rrr,dash,thick,shorten <= -.5em,shorten >= -.5em] & & & \bullet \\[-20pt] 
\end{tikzcd}
\caption{The sheaves $F^0_{K_{-}}$ on the simplicial filtration depiced in Figure \ref{331} and the corresponding persistence barcode.}
\label{590}
\end{figure}
\begin{figure}[h] 
\centering
\begin{tikzcd}[row sep=large, column sep=large, every label/.append style = {font = \tiny}]
&[-35pt] 0 \arrow[r,"0"] \arrow[d,"0"] & 0 \arrow[r,"0"] \arrow[d,"0"] & 0 \arrow[r,"0"] \arrow[d,"0"] & \mathbb{F} \arrow[r,"0"] \arrow[d,"\begin{pmatrix} 1 \\ 0 \\ 0 \end{pmatrix}"] & 0 \arrow[r,"0"] \arrow[d,"0"] & 0\arrow[r,"0"] \arrow[d,"0"] & 0 \arrow[d,"0"] \\
F^1_{K_{-}}: & 0 \arrow[r,"0"] & 0 \arrow[r,"0"] & 0 \arrow[r,"0"] & \mathbb{F}^3 \arrow[r,"\begin{pmatrix} 0 \quad 1 \quad 0 \end{pmatrix}"] & \mathbb{F} \arrow[r,"\begin{pmatrix} 1\\ 1\\1\end{pmatrix}"] & \mathbb{F}^3 \arrow[r,"0"] & 0 \\
& 0 \arrow[r,"0"] \arrow[u,swap,"0"] & 0 \arrow[r,"0"] \arrow[u,swap,"0"] & 0 \arrow[r,"0"] \arrow[u,swap,"0"] & \mathbb{F} \arrow[r,"0"] \arrow[u,swap,"\begin{pmatrix} 0 \\ 0 \\ 1 \end{pmatrix}"] & 0 \arrow[r,"0"] \arrow[u,swap,"0"] & \mathbb{F} \arrow[r,"0"] \arrow[u,swap,"\begin{pmatrix} 0 \\ 0 \\ 1 \end{pmatrix}"] & 0 \arrow[u,swap,"0"] \\
H^1(L,-): & 0 \arrow[r,"0"] & 0 \arrow[r,"0"] & 0 \arrow[r,"0"] & \mathbb{F} \arrow[r,"1"] & \mathbb{F} \arrow[r,"\begin{pmatrix} 1\\1\end{pmatrix}"] & \mathbb{F}^2 \arrow[r,"0"] & 0 \\[-30pt]
& & & & \bullet \arrow[rr,dash,thick,shorten <= -.5em,shorten >= -.5em] & & \bullet \\[-35pt]
& & & & & & \bullet
\end{tikzcd}
\caption{The sheaves $F^1_{K_{-}}$ on the simplicial filtration depiced in Figure \ref{331} and the corresponding persistence barcode.}
\label{628}
\end{figure}
\noindent
The barcode corresponding to the zero(one)-dimensional persistent sheaf cohomology of the diagram of sheaves in Figure \ref{590}(\ref{628}) is depicted in yellow(green) in Figure \ref{331}. We can interpret the barcode in the following way. There is a persistent component that would decompose into two components if we would not consider edges between points of different colors. There is a persistent cycle that would not exist if we would not consider edges between red and blue points. In other words, the long bars describe the persistent mixed features in the filtration. 

\subsection{Type T} \label{741}  

Consider a simplicial complex $K$ with the following filtration 
\begin{equation} \label{753}
\begin{tikzcd}
K_0 \arrow[r,hook,"\iota_0"] & K_1 \arrow[r,hook,"\iota_1"] & \cdots \arrow[r,hook,"\iota_{m-3}"] & K_{m-2} \arrow[r,hook,"\iota_{m-2}"] & K_{m-1}=K \quad .
\end{tikzcd}
\end{equation}
In this section, we want to discuss a construction of a non-constant cellular sheaf $F$ on $K$, compute the type T persistent sheaf cohomology of $F$ on (\ref{753}) and give an interpretation of the corresponding persistence barcode. We again assume that we have some additional information in the form of a simplicial map $f\colon K\rightarrow L=\begin{tikzcd} \color{blue} \bullet \arrow[rr,dash,shorten <=-5.7pt,shorten >=-5.5pt,violet,thick] &[-5pt] &[-5pt] \color{red} \bullet \color{black}\end{tikzcd}$. As in the previous section, imagine that the filtration in (\ref{753}) comes from a Vietoris-Rips construction on a data set $P$ and the simplicial map $f$ comes from a labeling of the data set by two labels $P\rightarrow \{b,r\}$.  Now consider the following sheaf $F$ on $L$ 
\begin{equation} \label{349}
\begin{tikzcd}[row sep=tiny, every label/.append style = {font = \tiny}]
\mathbb{F} \arrow[r,"\begin{pmatrix} 1 \\ 0 \end{pmatrix}"] & \mathbb{F}^2 & \arrow[l,swap,"\begin{pmatrix} 0 \\ 1 \end{pmatrix}"] \mathbb{F} \\
\color{blue} \bullet \arrow[rr,dash,swap,thick,shorten <= -.6em,shorten >= -.6em,violet,thick] & & \color{red} \bullet
\end{tikzcd}
\end{equation} 
A property of this sheaf is that it has no non-zero global sections.
We can extend (\ref{753}) by $L$ and apply the construction of copersistence modules of type T
\begin{equation} \label{312}
\begin{tikzcd}
F^0 & \arrow[l,swap,"\iota_0^*",mapsto] F^1 & \arrow[l,swap,"\iota_1^*",mapsto] \cdots & \arrow[l,swap,"\iota_{m-3}^*",mapsto] F^{m-2} & \arrow[l,swap,"\iota_{m-2}^*",mapsto] F^{m-1} & \arrow[l,swap,"f^*",mapsto] F \\[-20pt]
K_0 \arrow[r,hook,"\iota_0"] & K_1 \arrow[r,hook,"\iota_1"] & \cdots \arrow[r,hook,"\iota_{m-3}"] & K_{m-2} \arrow[r,hook,"\iota_{m-2}"] & K_{m-1} \arrow[r,"f"] & L
\end{tikzcd}
\end{equation}
where we only consider the sheaves $F^i$ on $K_i$. On all the edges in $K_i$ of the form $\begin{tikzcd} \color{blue} \bullet \arrow[rr,dash,shorten <=-6pt,shorten >=-6pt,blue,thick] &[-5pt] &[-5pt] \color{blue} \bullet \end{tikzcd}$ or $\begin{tikzcd} \color{red} \bullet \arrow[rr,dash,shorten <=-6pt,shorten >=-6pt,red,thick] &[-5pt] &[-5pt] \color{red} \bullet \end{tikzcd}$ the sheaf $F^i$ is just the constant sheaf. On a purple edge $F^i$ is equal to (\ref{349}). This means that the sheaf $F^i$ restricted to a component of $K_i$ that contains points with label $b$ and points with label $r$ has no global sections. On the other hand, if a component has only points labeld $b$ or $r$ the sheaf $F^i$ restricted to $K_i$ is the constant sheaf and the space of global sections of the constant sheaf restricted to a component is one-dimensional. Hence, for a component $C\subseteq K_i$, we obtain
\begin{equation*} %\label{752}
\text{dim } H^0(C,F^i|_{C})=\begin{cases} 0 \text{ if } C \text{ contains b and r points} \\ 1 \text{ else} \end{cases} \quad .
\end{equation*}
Therefore, the zero-dimensional persistent sheaf cohomology describes the persistent unicolored components in the labeled filtration (\ref{753}). The example in Figure \ref{983} shows an instance of this construction. 
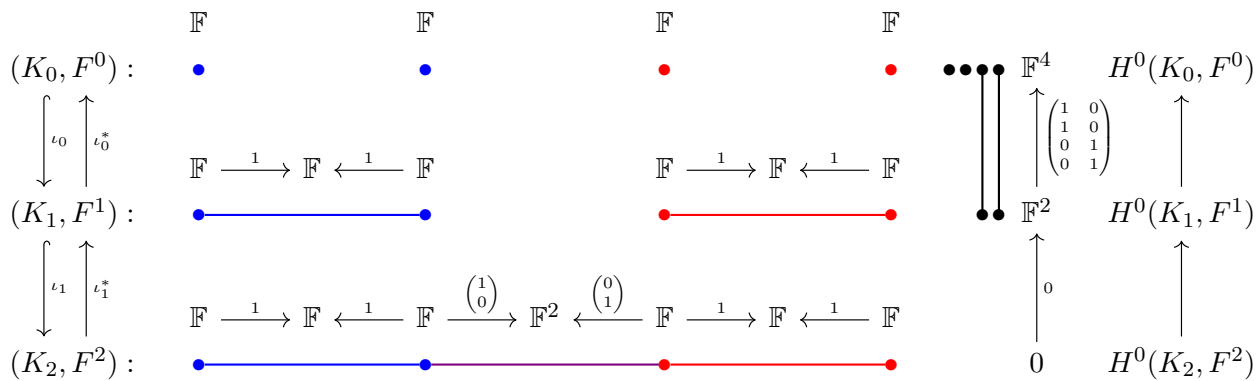
\begin{figure}[h]
\centering 
\begin{tikzcd}[every label/.append style = {font = \tiny}]
&[-15pt] \mathbb{F} & & \mathbb{F} & & \mathbb{F} & & \mathbb{F} \\[-20pt]
(K_0,F^0): \arrow[dd,hook,"\iota_0",shift left=-4] &[-15pt] \color{blue} \bullet & & \color{blue} \bullet &[10pt] &[10pt] \color{red} \bullet & & \color{red} \bullet &[-20pt] \bullet &[-35pt] \bullet &[-35pt] \bullet &[-35pt] \bullet &[-30pt] \mathbb{F}^4 &[-15pt] H^0(K_0,F^0) \\
& \mathbb{F} \arrow[r,"1"] & \mathbb{F} & \arrow[l,swap,"1"] \mathbb{F} & & \mathbb{F} \arrow[r,"1"] & \mathbb{F} & \arrow[l,swap,"1"] \mathbb{F} \\[-20pt]
(K_1,F^1): \arrow[dd,hook,"\iota_1",shift left=-4] \arrow[uu,shift left=-2,swap,"\iota_0^*"] & \color{blue} \bullet \arrow[rr,dash,thick,shorten <= -.5em,shorten >= -.5em,blue] & & \color{blue} \bullet & & \color{red} \bullet \arrow[rr,dash,thick,shorten <= -.5em,shorten >= -.5em,red] & & \color{red} \bullet &  & & \bullet \arrow[uu,dash,thick,shorten <= -.5em,shorten >= -.5em] & \bullet \arrow[uu,dash,thick,shorten <= -.5em,shorten >= -.5em] & \mathbb{F}^2 \arrow[uu,swap,"\begin{pmatrix}1\quad 0\\1\quad 0\\0\quad 1\\0\quad 1\end{pmatrix}"] & H^0(K_1,F^1) \arrow[uu] \\
& \mathbb{F} \arrow[r,"1"] & \mathbb{F} & \arrow[l,swap,"1"] \mathbb{F} \arrow[r,"\begin{pmatrix}1\\0\end{pmatrix}"] & \mathbb{F}^2 & \arrow[l,swap,"\begin{pmatrix}0\\1\end{pmatrix}"] \mathbb{F} \arrow[r,"1"] & \mathbb{F} & \arrow[l,swap,"1"] \mathbb{F} \\[-20pt]
(K_2,F^2): \arrow[uu,shift left=-2,swap,"\iota_1^*"] & \color{blue} \bullet \arrow[rr,dash,thick,shorten <= -.5em,shorten >= -.5em,blue] & & \color{blue} \bullet \arrow[rr,dash,thick,shorten <= -.5em,shorten >= -.5em,violet] & & \color{red} \bullet \arrow[rr,dash,thick,shorten <= -.5em,shorten >= -.5em,red] & & \color{red} \bullet &  &  &  &  & 0 \arrow[uu,swap,"0"] & H^0(K_2,F^2) \arrow[uu] 
\end{tikzcd}
\caption{The left and middle columns show the sheaves of construction (\ref{312}) on a colored simplicial filtration. The right column shows the corresponding copersistence module and barcode.}
\label{983}
\end{figure}
In $K_0$ there are $4$ unicolored components. After some of them merge in $K_1$, there are $2$ unicolored components left. In $K_2$ all components merge such that there is just a single component left but this component is no longer unicolored. Therefore, there is no zero-dimensional sheaf cohomology class in $F^2$.

\section{Conclusion} \label{244}

The main contribution of this work is to establish a rigorous basis of the theory of persistent sheaf cohomology. We distinguish between two different versions of persistent sheaf cohomology. The first version measures the persistence of sheaf cohomology under the variation of sheaves on a fixed topological space. Most notably, we generalized the ZC-representation theorem to sheaves and outlined a method for computing persistent sheaf cohomology in an efficient way by computing the cohomology of a sheaf of graded modules. The second version measures the persistence of sheaf cohomology of a (in some sense) fixed sheaf under the variation of the topological spaces on which this sheaf is defined. We showed that in certain scenarios one can reduce the second construction to the first one. This result, applied to constant sheaves on filtered simplicial complexes, suggests viewing filtered simplicial complexes as (co)sheaves of graded modules and persistent simplicial cohomology as the (co)homology of this (co)sheaf. Moreover, we showed that we can combine both constructions to obtain a new kind of two-dimensional persistence module with an algebraic and a topological dimension.

An important question that is not addressed here is the question of stability. That means the question if one can define a notion of stability for persistent sheaf cohomology. We plan to address this question in further work. We also do not address the question of applications. In Section \ref{563}, we describe how to obtain instances of both constructions of sheaf (co)persistence modules from labeled data sets. Since labeled data sets are very common in practice a possible direction for further research would be to investigate if persistent sheaf cohomology is a useful invariant of labeled data sets.

From our point of view, the greatest difficulty of applied sheaf theory in general is to construct interesting sheaves. Many of the established practical applications of sheaf theory focus on sheaves on graphs. If we want to construct sheaves on higher-dimensional simplicial complexes, we have to satisfy a lot of commutativity constraints. The problem is that ``the real world" might not provide this commutativity. Therefore, it is natural to look for constructions, similar in spirit to the approximation of topological spaces or data sets by simplicial complexes, for approximating objects of interest by sheaves. Given such a hypothetical parameter-dependent construction, persistent sheaf cohomology could be an interesting tool to investigate the structure of the obtained sheaf models. In this way, we could extend the cohomological investigation of structures far beyond the study of holes in simplicial complexes.   \newline

\noindent
\textbf{Acknowledgements} \newline
\noindent
The author is supported by the Austrian Science Fund (FWF): W1230. The author thanks Michael Kerber, Barbara Giunti and Jan Jendrysiak for useful discussions.

\bibliographystyle{plain}

\begin{thebibliography}{10}

\bibitem{buchet}
Hideto Asashiba, Mickaël Buchet, Emerson~G. Escolar, Ken Nakashima, and Michio
  Yoshiwaki.
\newblock On interval decomposability of 2d persistence modules, 2018.
\newblock \href{https://arxiv.org/abs/1812.05261}{arXiv:1812.05261}.

\bibitem{berkouk}
Nicolas Berkouk.
\newblock {\em {Persistence and Sheaves : from Theory to Applications}}.
\newblock Theses, {Institut Polytechnique de Paris}, 2020.
\newblock
  \href{https://tel.archives-ouvertes.fr/tel-02970509v2}{https://tel.archives-ouvertes.fr/tel-02970509v2}.

\bibitem{botnan}
Magnus~Bakke Botnan, Vadim Lebovici, and Steve Oudot.
\newblock On rectangle-decomposable 2-parameter persistence modules, 2020.
\newblock \href{https://arxiv.org/abs/2002.08894}{arXiv:2002.08894}.

\bibitem{bott}
Raoul Bott and Loring~W. Tu.
\newblock {\em Differential Forms in Algebraic Topology}.
\newblock Graduate Texts in Mathematics. Springer New York, 2013.

\bibitem{bredon}
Glen~E. Bredon.
\newblock {\em Sheaf Theory}.
\newblock Graduate Texts in Mathematics. Springer New York, 1997.

\bibitem{brown}
Adam Brown and Bei Wang.
\newblock Sheaf-theoretic stratification learning from geometric and
  topological perspectives.
\newblock {\em Discrete and Computational Geometry}, 65, 2021.

\bibitem{bubenik}
Peter Bubenik and Nikola Milićević.
\newblock Homological algebra for persistence modules.
\newblock {\em Foundations of Computational Mathematics}, 21(5):1233–1278,
  2021.

\bibitem{carlsson2}
Gunnar Carlsson and Afra Zomorodian.
\newblock The theory of multidimensional persistence.
\newblock {\em Discrete and Computational Geometry}, 42:71--93, 2007.

\bibitem{cohensteiner}
David Cohen-Steiner, Herbert Edelsbrunner, and John Harer.
\newblock Stability of persistence diagrams.
\newblock {\em Discrete and Computational Geometry}, 37:263--271, 2005.

\bibitem{corbet}
René Corbet and Michael Kerber.
\newblock The representation theorem of persistence revisited and generalized.
\newblock {\em Journal of Applied and Computational Topology}, 2(1-2):1–31,
  2018.

\bibitem{curry}
Justin Curry.
\newblock {\em Sheaves, Cosheaves and Applications}.
\newblock PhD thesis, {University of Pennsylvania}, 2014.
\newblock \href{https://arxiv.org/abs/1303.3255}{arXiv:1303.3255}.

\bibitem{morse}
Justin Curry, Robert Ghrist, and Vidit Nanda.
\newblock Discrete morse theory for computing cellular sheaf cohomology.
\newblock {\em Foundations of Computational Mathematics}, 16, 2013.

\bibitem{perscoho}
Vin de~Silva, Dmitriy Morozov, and Mikael Vejdemo-Johansson.
\newblock Persistent cohomology and circular coordinates.
\newblock {\em Discrete and Computational Geometry}, 45:737--759, 2009.

\bibitem{edelsbrunner}
Herbert Edelsbrunner, David Letscher, and Afra Zomorodian.
\newblock Topological persistence and simplification.
\newblock {\em Discrete and Computational Geometry}, 28(4):511–533, 2002.

\bibitem{cohomology}
Jean Gallier and Jocelyn Quaintance.
\newblock Homology, cohomology, and sheaf cohomology.
\newblock 2021.
\newblock
  \href{https://www.seas.upenn.edu/~jean/sheaves-cohomology.pdf}{https://www.seas.upenn.edu/~jean/sheaves-cohomology.pdf}.

\bibitem{ghristbook}
Robert Ghrist.
\newblock {\em Elementary Applied Topology}.
\newblock CreateSpace Independent Publishing Platform, 2014.

\bibitem{ghrist}
Robert Ghrist and Yasuaki Hiraoka.
\newblock Applications of sheaf cohomology and exact sequences on network
  codings.
\newblock 2011.
\newblock
  \href{https://www2.math.upenn.edu/~ghrist/preprints/networkcodingshort.pdf}{https://www2.math.upenn.edu/~ghrist/preprints/networkcodingshort.pdf}.

\bibitem{gray}
John~W. Gray.
\newblock Sheaves with values in a category.
\newblock {\em Topology}, 3(1):1--18, 1965.

\bibitem{tohoku}
Alexander Grothendieck.
\newblock {Sur quelques points d'algèbre homologique, I}.
\newblock {\em Tohoku Mathematical Journal}, 9(2):119 -- 221, 1957.

\bibitem{grothendieckega}
Alexander Grothendieck.
\newblock \'el\'ements de g\'eom\'etrie alg\'ebrique : Iii. \'etude
  cohomologique des faisceaux coh\'erents, premi\`ere partie.
\newblock {\em Publications Math\'ematiques de l'IH\'ES}, 11:5--167, 1961.

\bibitem{hansen}
Jakob Hansen and Robert Ghrist.
\newblock Opinion dynamics on discourse sheaves.
\newblock {\em SIAM Journal on Applied Mathematics}, 81:2033--2060, 2021.

\bibitem{iversen}
Birger Iversen.
\newblock {\em Cohomology of Sheaves}.
\newblock Universitext. Springer Berlin Heidelberg, 1986.

\bibitem{kashiwara}
Masaki Kashiwara and Pierre Schapira.
\newblock Persistent homology and microlocal sheaf theory.
\newblock {\em Journal of Applied and Computational Topology}, 2, 2018.

\bibitem{kim}
Woojin Kim and Facundo Mémoli.
\newblock Generalized persistence diagrams for persistence modules over posets.
\newblock {\em Journal of Applied and Computational Topology}, 5(4):533–581,
  2021.

\bibitem{maclane}
Saunders~Mac Lane.
\newblock {\em Categories for the Working Mathematician}.
\newblock Graduate Texts in Mathematics. Springer New York, 1998.

\bibitem{gradedrings}
Constantin Nastasescu and Freddy van Oystaeyen.
\newblock {\em Methods of Graded Rings}.
\newblock Lecture Notes in Mathematics. Springer, 2004.

\bibitem{nlab:currying}
nLab authors.
\newblock currying.
\newblock
  \href{https://ncatlab.org/nlab/show/currying}{https://ncatlab.org/nlab/show/currying}.

\bibitem{nlab:injectives}
nLab authors.
\newblock injective object.
\newblock
  \href{http://nlab-pages.s3.us-east-2.amazonaws.com/nlab/show/injective+object#Adjuncts_Injectives}{http://nlab-pages.s3.us-east-2.amazonaws.com/nlab/show/injective+object}.

\bibitem{oudot}
Steve Oudot.
\newblock {\em Persistence Theory: From Quiver Representations to Data
  Analysis}.
\newblock Mathematical Surveys and Monographs. American Mathematical Society,
  2015.

\bibitem{cosheaf}
Andrei~V. Prasolov.
\newblock Cosheafification.
\newblock {\em Theory and Applications of Categories}, 31:1134–1175, 2016.

\bibitem{pushout}
The~Stacks project authors.
\newblock The stacks project.
\newblock
  \href{https://stacks.math.columbia.edu/tag/08N4}{https://stacks.math.columbia.edu/tag/08N4}.

\bibitem{robinson4}
Michael Robinson.
\newblock Understanding networks and their behaviors using sheaf theory.
\newblock {\em 2013 IEEE Global Conference on Signal and Information
  Processing, GlobalSIP 2013 - Proceedings}, 2013.

\bibitem{robinsonbook}
Michael Robinson.
\newblock {\em Topological Signal Processing}.
\newblock Mathematical Engineering. Springer Berlin Heidelberg, 2014.

\bibitem{robinson2}
Michael Robinson.
\newblock Sheaves are the canonical data structure for sensor integration.
\newblock {\em Information Fusion}, 36:208--224, 2017.

\bibitem{robinson3}
Michael Robinson, C.~Joslyn, Emilie Hogan, and C.~Capraro.
\newblock Conglomeration of heterogeneous content using local topology.
\newblock 2015.
\newblock
  \href{https://www.drmichaelrobinson.net/simplex_techrep.pdf}{https://www.drmichaelrobinson.net/simplex\_techrep.pdf}.

\bibitem{sella}
Yehonatan Sella.
\newblock Comparison of sheaf cohomology and singular cohomology, 2016.
\newblock \href{https://arxiv.org/abs/1602.06674}{arXiv:1602.06674}.

\bibitem{shepard}
Allen~D. Shepard.
\newblock {\em A Cellular Description of the Derived Category of a Stratified
  Space}.
\newblock PhD thesis, 1985.

\bibitem{weibel}
Charles~A. Weibel.
\newblock {\em An Introduction to Homological Algebra}.
\newblock Cambridge Studies in Advanced Mathematics. Cambridge University
  Press, 1994.

\bibitem{yegnesh}
Karthik Yegnesh.
\newblock Persistence and sheaves.
\newblock 2017.
\newblock \href{https://arxiv.org/abs/1612.03522}{arXiv:1612.03522}.

\bibitem{yoon}
Hee~Rhang Yoon.
\newblock {\em Cellular Sheaves And Cosheaves For Distributed Topological Data
  Analysis}.
\newblock PhD thesis, Publicly Accessible PennDissertations. 2936, 2018.

\bibitem{yoonghrist}
Hee~Rhang Yoon and Robert Ghrist.
\newblock Persistence by parts: Multiscale feature detection via distributed
  persistent homology, 2020.
\newblock \href{https://arxiv.org/abs/2001.01623}{arXiv:2001.01623}.

\bibitem{carlsson}
Afra Zomorodian and Gunnar Carlsson.
\newblock Computing persistent homology.
\newblock {\em Discrete and Computational Geometry}, 33:249--274, 2005.

\bibitem{casas}
Álvaro Torras~Casas.
\newblock Distributing persistent homology via spectral sequences, 2020.
\newblock \href{https://arxiv.org/abs/1907.05228}{arXiv:1907.05228}.

\end{thebibliography}

\appendix

\section{Appendix} \label{759}

\subsection{Cellular sheaves on simplicial complexes} \label{684}

In this section we discuss how to interpret abstract simplicial complexes as topological spaces with the Alexandrov topology. For a general reference on cellular sheaf theory see \cite{curry}.

Let $X$ be an abstract simplicial complex. Define the following order relation 
\begin{equation*} %\label{185}
\sigma\leq \tau \mathrel{\vcentcolon\Longleftrightarrow} \sigma\subseteq \tau \quad \forall \sigma,\tau\in X
\end{equation*}
i.e.\ a simplex $\sigma$ in $X$ is smaller or equal than a simplex $\tau$ in $X$ if and only if $\sigma$ is a face of $\tau$. The set $X$ together with this order relation forms a poset $(X,\leq)$ called the face relation poset of the simplicial complex. Figure \ref{849} shows the face relation poset of a $2$-simplex.
\begin{figure}[h] 
\centering
\includegraphics[scale=0.25]{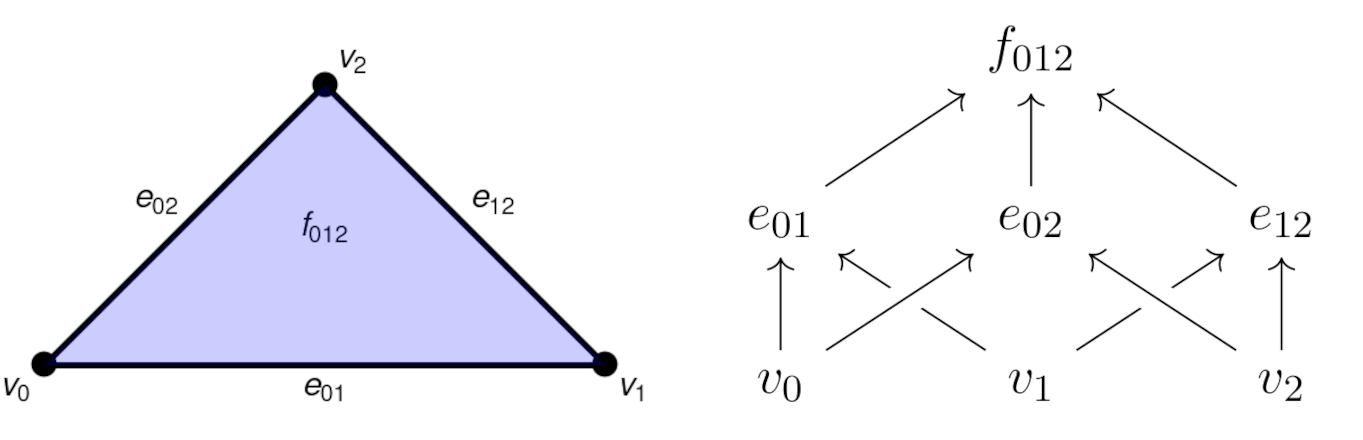}
\caption{Face relation poset of $2$-simplex.}
\label{849}
\end{figure} 

\begin{definition}[Alexandrov topology] \label{913} \cite[Definition 4.2.2]{curry}
Let $(X,\leq)$ be a partially ordered set. Define the \emph{Alexandrov topology} to be the topology on $X$ whose open sets are the sets $U\subseteq X$ that satisfy the following property: 
\begin{equation*} %\label{647}
x\in U \text{ and } x\leq y \implies y\in U \, .
\end{equation*}
The open sets $U_x\coloneqq \{y\in X \text{ }|\text{ } x\leq y\}$ form a basis of the Alexandrov topology. 
\end{definition} 

\noindent
Now we can view a simplicial complex $X$ as a topological space with the Alexandrov topology of the face relation poset. Figure \ref{314} shows the Alexandrov basic open sets of a $2$-simplex as defined in \ref{913}.
\begin{figure}[h] 
\centering
\includegraphics[scale=0.5]{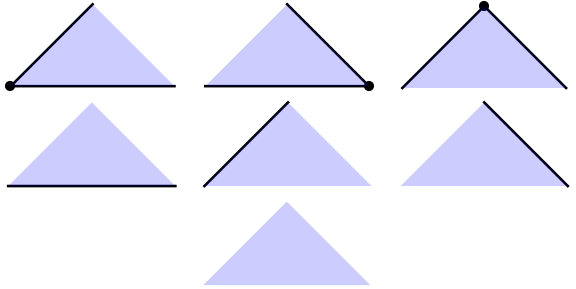}
\caption{Alexandrov basic open sets of the $2$-simplex.}
\label{314}
\end{figure} 
The basic open set corresponding to a simplex is the set of all its cofaces. Viewing a simplicial complex as a topological space allows us to interpret properties of abstract simplicial complexes in terms of the topology on $X$. For example, one can easily show that:
\begin{itemize}
\item $X$ is connected as a simplicial complex if and only if it is connected as a topological space.
\item A subcomplex $Y\subseteq X$ is a closed subspace.
\item A simplicial map $f\colon X\rightarrow Y$ is a continuous map with respect to the Alexandrov topology on $X$ and $Y$.
\end{itemize}  

\noindent
We can now consider sheaves $\mathcal{F}\colon\mathbf{Open}(X)^{op}\rightarrow\mathbf{M}$  on the abstract simplicial complex $X$ viewed as a topological space. We can restrict these functors to the full subcategory $\mathbf{bOpen}(X)^{op}$ of basic open subsets of $X$ as defined in \ref{913}. We denote by $\mathbf{X}$ the category corresponding to the face relation poset of $X$. It is easy to see that $\mathbf{bOpen}(X)^{op}\cong \mathbf{X}$. Hence, a sheaf $\mathcal{F}$ on $X$ defines a functor $F\colon\mathbf{X}\rightarrow \mathbf{M}$ such that $F(x)=\mathcal{F}(U_x)$ for all $x\in X$. One can show that conversely every functor $F\colon\mathbf{X}\rightarrow \mathbf{M}$, i.e.\ every functor defined on the basis of the Alexandrov topology, can be uniquely extended to a sheaf on $X$ and $\mathbf{Fun}(\mathbf{X},\mathbf{M})\cong\mathbf{Shv}(X,\mathbf{M})$ \cite[Theorem 4.2.10]{curry}. This implies that we can define a cellular sheaf on a simplicial complex in the following way.
\begin{definition}[Cellular sheaf on simplicial complex] \label{362}
A \emph{cellular sheaf on a simplicial complex} $X$ is a functor $F\colon\mathbf{X}\rightarrow \mathbf{M}$, i.e.\ an assignment of an object $F(\sigma)\in\mathbf{M}$ to each simplex $\sigma\in X$ and a morphism $F(\sigma\rightarrow\tau)\in\text{Hom}(F(\sigma),F(\tau))$ to each incident pair of simplices $\sigma\leq\tau\in X$ subject to commutativity requirements. 
\end{definition}
\noindent
Figure \ref{670} shows an example of a cellular sheaf of $\mathbb{F}$-vector spaces on a $2$-simplex.
\begin{figure}[h] 
\centering
\begin{tikzcd}[column sep=large, row sep=large,every label/.append style = {font = \scriptsize}]
& & \mathbb{F}^2 \arrow[dl,swap,"\begin{pmatrix}  1 \quad 0 \\  0 \quad 1 \end{pmatrix}"] \arrow[dr,"\begin{pmatrix}  1 \quad 0 \\  0 \quad 1 \end{pmatrix}"] &  \\
& \mathbb{F}^2 \arrow[r,"\begin{pmatrix}  1 \quad 0 \end{pmatrix}"] & \mathbb{F} & \mathbb{F}^2 \arrow[l,swap,"\begin{pmatrix}  1 \quad 0 \end{pmatrix}"] \\
\mathbb{F}^2 \arrow[ur,"\begin{pmatrix}  0 \quad 1 \\  1 \quad 0 \end{pmatrix}"] \arrow[rr,"\begin{pmatrix}  1 \quad 0 \\  0 \quad 1 \end{pmatrix}"] & & \mathbb{F}^2 \arrow[u,swap,"\begin{pmatrix} 0 \quad 1 \end{pmatrix}"] & & \mathbb{F}^2 \arrow[ul,swap,"\begin{pmatrix}  0 \quad 1 \\  1 \quad 0 \end{pmatrix}"] \arrow[ll,swap,"\begin{pmatrix}  1 \quad 0 \\  0 \quad 1 \end{pmatrix}"]
\end{tikzcd}
\caption{Sheaf of $\mathbb{F}$-vector spaces on $2$-simplex}
\label{670}
\end{figure}
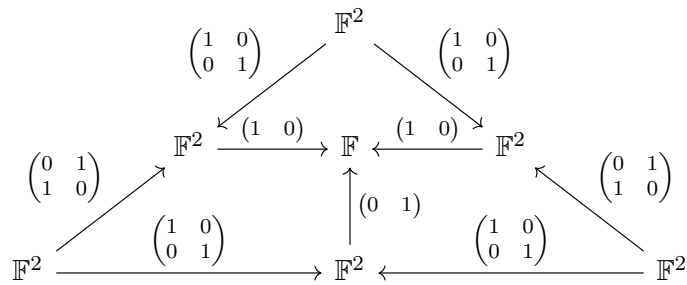
Note that, since different paths in the poset in Figure \ref{849} commute, the linear maps in Figure \ref{670} also have to commute.

\subsection{Proof of Proposition \ref{635}} \label{702}

\begin{proposition} 
The following diagram commutes 
\begin{equation*} 
\begin{tikzcd}
\mathbf{coCh}(\mathbf{grMod}_{R[t]}) \arrow[r,shift left=1,"\underline{\epsilon}"] \arrow[d,swap,"H^k"] &[10pt] \arrow[l,shift left=1,"\underline{\eta}"] \mathbf{coCh}\big(\mathbf{Fun}(\mathbb{N}_0,\mathbf{Mod}_R)\big) \arrow[r,shift left=1,"cr^{-1}"] & \arrow[l,shift left=1,"cr"] \mathbf{Fun}\big(\mathbb{N}_0,\mathbf{coCh}(\mathbf{Mod}_R)\big) \arrow[d,"\underline{H}^k"]  \\
\mathbf{grMod}_{R[t]} \arrow[rr,shift left=1,"\epsilon"] && \arrow[ll,shift left=1,"\eta"] \mathbf{Fun}(\mathbb{N}_0,\mathbf{Mod}_R)
\end{tikzcd} 
\end{equation*}
i.e.\ there are natural isomorphisms $\eta\circ \underline{H}^k\cong H^k\circ \underline{\eta}\circ cr$ and $\epsilon\circ H^k\cong \underline{H}^k\circ cr^{-1}\circ\underline{\epsilon}$ where $cr$ and $cr^{-1}$ are obtained by currying \cite{nlab:currying}. 
\end{proposition}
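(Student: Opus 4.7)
The plan is to decompose the large diagram into two compatible natural isomorphisms which can be chained, exploiting two structural facts: first, that $\epsilon$ and $\eta$ from Theorem \ref{960} are mutually inverse equivalences of abelian categories and are therefore exact; and second, that cohomology in a functor category valued in an abelian category is computed pointwise.

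First, I would establish the exactness of $\epsilon$ (and dually $\eta$). Any equivalence of categories preserves all limits and colimits it is a part of; since both $\mathbf{grMod}_{R[t]}$ and $\mathbf{Fun}(\mathbb{N}_0, \mathbf{Mod}_R)$ are abelian, $\epsilon$ must preserve kernels and cokernels, hence it is exact. An exact functor commutes with the formation of cohomology of a cochain complex, so for any $C^\bullet \in \mathbf{coCh}(\mathbf{grMod}_{R[t]})$ there is a natural isomorphism
\begin{equation*}
\epsilon\bigl(H^k(C^\bullet)\bigr) \;\cong\; H^k\bigl(\underline{\epsilon}(C^\bullet)\bigr),
\end{equation*}
where the right-hand side is cohomology in the abelian category $\mathbf{Fun}(\mathbb{N}_0, \mathbf{Mod}_R)$. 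This is the left half of the desired commutativity.

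Second, I would verify that the currying isomorphism commutes with cohomology. Because limits and colimits in $\mathbf{Fun}(\mathbb{N}_0, \mathbf{A})$ with $\mathbf{A}$ abelian are computed pointwise, the kernel and cokernel of any chain map $\phi\colon D^\bullet \to E^\bullet$ in $\mathbf{coCh}\bigl(\mathbf{Fun}(\mathbb{N}_0, \mathbf{Mod}_R)\bigr)$ are, at index $n$, the kernel and cokernel of the $R$-module map $\phi^\bullet_n\colon D^\bullet_n \to E^\bullet_n$. Consequently $H^k(D^\bullet)$, evaluated at $n$, equals $H^k(D^\bullet_n)$; unraveling this through the definition of $cr^{-1}$ yields the natural isomorphism
\begin{equation*}
H^k \;\cong\; \underline{H}^k \circ cr^{-1}
\end{equation*}
of functors $\mathbf{coCh}\bigl(\mathbf{Fun}(\mathbb{N}_0, \mathbf{Mod}_R)\bigr)\to \mathbf{Fun}(\mathbb{N}_0, \mathbf{Mod}_R)$.

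Chaining these two natural isomorphisms produces $\epsilon \circ H^k \cong \underline{H}^k \circ cr^{-1} \circ \underline{\epsilon}$; pre- and post-composing with $\eta$ and invoking $\eta \circ \epsilon \cong \mathrm{id}$, $\epsilon \circ \eta \cong \mathrm{id}$ together with $cr \circ cr^{-1} \cong \mathrm{id}$ then yields the symmetric statement $\eta \circ \underline{H}^k \cong H^k \circ \underline{\eta} \circ cr$. The content of the proof is essentially structural and requires no delicate construction, so the main obstacle is not conceptual but notational: one must carefully track the two different meanings of $cr$ (interchanging $\mathbf{coCh}$ with $\mathbf{Fun}(\mathbb{N}_0,-)$ at different abelian targets), verify that the pointwise natural isomorphisms are natural also in the complex, and confirm that the identifications of cohomology objects are compatible with the morphisms induced by chain maps — all standard but somewhat tedious diagram chases.
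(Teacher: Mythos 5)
Your argument is correct, but it takes a more structural route than the paper's. The paper proves the proposition by direct computation: it takes an arbitrary morphism $g^\bullet\colon C^\bullet\to D^\bullet$ in $\mathbf{coCh}(\mathbf{grMod}_{R[t]})$, writes each term as $\bigoplus_{n}C_n^k$ with its differentials, $t$-action and projections made explicit, and checks by hand that applying $H^k$ followed by $\epsilon$ produces the same linear diagram $H^k(C_0^\bullet)\to H^k(C_1^\bullet)\to\cdots$ as applying $cr^{-1}\circ\underline{\epsilon}$ followed by $\underline{H}^k$. You instead factor the square through the intermediate functor $H^k\colon\mathbf{coCh}\big(\mathbf{Fun}(\mathbb{N}_0,\mathbf{Mod}_R)\big)\to\mathbf{Fun}(\mathbb{N}_0,\mathbf{Mod}_R)$ and justify each half by a general principle: an equivalence of abelian categories is exact (being simultaneously a left and right adjoint of its quasi-inverse) and hence commutes with cohomology of complexes, giving $\epsilon\circ H^k\cong H^k\circ\underline{\epsilon}$; and kernels and cokernels, hence cohomology, in a functor category valued in an abelian category are computed pointwise, giving $H^k\cong\underline{H}^k\circ cr^{-1}$. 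Both halves are standard, the naturality checks go through, and your derivation of the second natural isomorphism from the first via $\eta\circ\epsilon\cong\mathrm{id}$ and $cr\circ cr^{-1}\cong\mathrm{id}$ coincides with the paper's closing step. What your approach buys is brevity and conceptual transparency; what the paper's explicit computation buys is a concrete formula for the isomorphism (the induced map is $\bigoplus_{n}H^k(g_n^\bullet)\circ p_n$ with $t$-action $H^k(t\cdot)$), which the paper reuses essentially verbatim in the \v{C}ech-cohomology argument of Theorem \ref{285}.
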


\begin{proof}
Let $g^\bullet\colon C^\bullet\rightarrow D^\bullet$ be the following morphism in $\mathbf{coCh}(\mathbf{grMod}_{R[t]})$ 
\begin{equation*}
\begin{tikzcd}[row sep=large,column sep=large]
0 \arrow[r] & \underset{n\in\mathbb{N}_0}{\bigoplus}C_n^0 \arrow[r,"\underset{n\in\mathbb{N}_0}{\bigoplus}\delta^0_n\circ p_n"] \arrow[d,"\underset{n\in\mathbb{N}_0}{\bigoplus}g_n^0\circ p_n"{yshift=5pt}] & \underset{n\in\mathbb{N}_0}{\bigoplus}C_n^1 \arrow[r,"\underset{n\in\mathbb{N}_0}{\bigoplus}\delta^1_n\circ p_n"] \arrow[d,"\underset{n\in\mathbb{N}_0}{\bigoplus}g_n^1\circ p_n"{yshift=5pt}] & \underset{n\in\mathbb{N}_0}{\bigoplus}C_n^2 \arrow[r,"\underset{n\in\mathbb{N}_0}{\bigoplus}\delta^2_n\circ p_n"] \arrow[d,"\underset{n\in\mathbb{N}_0}{\bigoplus}g_n^2\circ p_n"{yshift=5pt}] & \cdots \\
0 \arrow[r] & \underset{n\in\mathbb{N}_0}{\bigoplus}D_n^0 \arrow[r,"\underset{n\in\mathbb{N}_0}{\bigoplus}\partial^0_n\circ p_n"] & \underset{n\in\mathbb{N}_0}{\bigoplus}D_n^1 \arrow[r,"\underset{n\in\mathbb{N}_0}{\bigoplus}\partial^1_n\circ p_n"] & \underset{n\in\mathbb{N}_0}{\bigoplus}D_n^2 \arrow[r,"\underset{n\in\mathbb{N}_0}{\bigoplus}\partial^2_n\circ p_n"] & \cdots
\end{tikzcd} \quad .
\end{equation*}
By applying $H^k$, we obtain a morphisms of graded $R[t]$-modules 
\begin{equation*}
\begin{tikzcd}
H^k(g^\bullet)\colon &[-30pt] H^k(C^\bullet) \arrow[r] & H^k(D^\bullet) \\[-23pt]
\verteq & \verteq & \verteq \\[-23pt]
\underset{n\in\mathbb{N}_0}{\bigoplus}H^k(g^\bullet_n)\circ p_n\colon &[-20pt] \underset{n\in\mathbb{N}_0}{\bigoplus}H^k(C^\bullet_n) \arrow[r] & \underset{n\in\mathbb{N}_0}{\bigoplus}H^k(D^\bullet_n) 
\end{tikzcd} 
\end{equation*}
where $t\cdot\coloneqq H^k(t\cdot)\colon H^k(C_n)\rightarrow H^k(C_{n+1})$. On the other hand, applying $cr^{-1}\circ\underline{\epsilon}$ yields the following morphism $cr^{-1}\circ\underline{\epsilon}(g^\bullet)\colon cr^{-1}\circ\underline{\epsilon}(C^\bullet)\rightarrow cr^{-1}\circ \underline{\epsilon}(D^\bullet)$
\begin{equation*}
\begin{tikzcd}[row sep=small,column sep=small]
&& \text{ } &[-15pt] && \iddots &&& \iddots &&& \iddots \\[5pt]
& 0 \arrow[rrr] &&& C_1^0 \arrow[rrr,"\delta_1^0"] \arrow[ddd,swap,"g_1^0"{yshift=-5pt}] \arrow[ur,"t\cdot"] &&& C_1^1 \arrow[rrr,"\delta_1^1"] \arrow[ddd,swap,"g_1^1"{yshift=-5pt}] \arrow[ur,"t\cdot"] &&& C_1^2 \arrow[rrr,"\delta_1^2"] \arrow[ddd,swap,"g_1^2"{yshift=-5pt}] \arrow[ur,"t\cdot"] &&& \cdots  \\[5pt]
0 \arrow[rrr,crossing over] &&& C_0^0 \arrow[rrr,crossing over,"\delta_0^0"] \arrow[ur,"t\cdot"] &&& C_0^1 \arrow[rrr,crossing over,"\delta_0^1"] \arrow[ur,"t\cdot"] &&& C_0^2 \arrow[rrr,crossing over,"\delta_0^2"] \arrow[ur,"t\cdot"] &&& \cdots \\[-20pt] 
&& \text{ } &&& \iddots &&& \iddots &&& \iddots \\[5pt]
& 0 \arrow[rrr] &&& D_1^0 \arrow[rrr,"\partial_1^0"] \arrow[ur,"t\cdot"] &&& D_1^1 \arrow[rrr,"\partial_1^1"] \arrow[ur,"t\cdot"] &&& D_1^2 \arrow[rrr,"\partial_1^2"] \arrow[ur,"t\cdot"] &&& \cdots \\[5pt]
0 \arrow[rrr] &&& D_0^0 \arrow[rrr,"\partial_0^0"] \arrow[uuu,<-,crossing over,swap,"g_0^0"{yshift=3pt}] \arrow[ur,"t\cdot"] &&& D_0^1 \arrow[rrr,"\partial_0^1"] \arrow[uuu,<-,crossing over,swap,"g_0^1"{yshift=3pt}] \arrow[ur,"t\cdot"] &&& D_0^2 \arrow[rrr,"\partial_0^2"] \arrow[uuu,<-,crossing over,swap,"g_0^2"{yshift=3pt}] \arrow[ur,"t\cdot"] &&& \cdots 
\end{tikzcd} \quad .
\end{equation*}
Hence, applying $\underline{H}^k$ to $cr^{-1}\circ\underline{\epsilon}(g^\bullet)\colon cr^{-1}\circ\underline{\epsilon}(C^\bullet)\rightarrow cr^{-1}\circ \underline{\epsilon}(D^\bullet)$ as well as applying $\epsilon$ to $H^k(g^\bullet)\colon H^k(C^\bullet)\rightarrow H^k(D^\bullet)$ yields
\begin{equation*}
\begin{tikzcd}[column sep=large]
H^k(C_0^\bullet) \arrow[r,"H^k(t \cdot)"] \arrow[d,"H^k(g_0^\bullet)"] & H^k(C_1^\bullet) \arrow[r,"H^k(t\cdot )"] \arrow[d,"H^k(g_1^\bullet)"] & H^k(C_2^\bullet) \arrow[r,"H^k(t\cdot)"] \arrow[d,"H^k(g_2^\bullet)"] & \cdots \\
H^k(D_0^\bullet) \arrow[r,"H^k(t\cdot)"] & H^k(D_1^\bullet) \arrow[r,"H^k(t\cdot)"] & H^k(D_2^\bullet) \arrow[r,"H^k(t\cdot)"] & \cdots
\end{tikzcd} \quad .
\end{equation*}
Therefore, we obtain  
\begin{equation*}
\begin{tikzcd}
\epsilon\circ H^k(C^\bullet) \arrow[r,"\cong"] \arrow[d,swap,"\epsilon\circ H^k(g^\bullet)"] & \underline{H}^k\circ cr^{-1}\circ\underline{\epsilon}(C^\bullet) \arrow[d,"\underline{H}^k\circ cr^{-1}\circ\underline{\epsilon}(g^\bullet)"] \\
\epsilon\circ H^k(D^\bullet) \arrow[r,"\cong"] & \underline{H}^k\circ cr^{-1}\circ\underline{\epsilon}(D^\bullet)
\end{tikzcd} \quad .
\end{equation*}
This implies $\epsilon\circ H^k\cong \underline{H}^k\circ cr^{-1}\circ \underline{\epsilon}$ and, since $\eta\circ\epsilon\cong\text{id}$ and $cr^{-1}\circ cr\cong\text{id}$, it also implies $\eta\circ \underline{H}^k\cong H^k\circ \underline{\eta}\circ cr$. 
\end{proof}

\subsection{The construction of sheaf cohomology} \label{658}

For a general reference on sheaf cohomology see \cite{bredon,cohomology,iversen}. As discussed in Section \ref{260}, sheaf cohomology is defined as the right derived functors of the global section functor (Definition \ref{661}). In this section we discuss how these functors are constructed.  We consider the category $\mathbf{Shv}(X,\mathbf{M})$, where $\mathbf{M}$ is a category of modules or graded modules. To define right derived functors, we have to introduce injective objects and injective resolutions in the category of sheaves. An injective resolution is, in some sense, an alternative representation of an object by a complex of special objects called injectives. 
\begin{definition}[Injective sheaf] \label{427} 
A sheaf $\mathcal{I}\in\mathbf{Shv}(X,\mathbf{M})$ is an \emph{injective sheaf} if for any sheaf monomorphism $\phi\colon\mathcal{F}\rightarrow \mathcal{G}$ and any sheaf morphism $\psi\colon\mathcal{F}\rightarrow \mathcal{I}$ there exists a sheaf morphism $\hat{\psi}\colon\mathcal{G}\rightarrow \mathcal{I}$ such that the following diagram commutes
\begin{equation*} %\label{222}
\begin{tikzcd}
0 \arrow[r] & \mathcal{F} \arrow[r,"\phi"] \arrow[d,swap,"\psi"] & \mathcal{G} \arrow[dl,dashed,"\hat{\psi}"] \\
& \mathcal{I} 
\end{tikzcd} \quad .
\end{equation*}
\end{definition}

\noindent 
To construct injective resolutions of objects in a certain category it is necessary that the category has enough injective objects. In our setting, this means that for every $\mathcal{F}\in\mathbf{Shv}(X,\mathbf{M})$ there exists an injective sheaf $\mathcal{I}\in\mathbf{Shv}(X,\mathbf{M})$ and a sheaf monomorphism $\phi\colon\mathcal{F}\rightarrow \mathcal{I}$. The category $\mathbf{Shv}(X,\mathbf{M})$ has enough injectives but this might not be a well-known fact in the case $\mathbf{M}=\mathbf{grMod}_R$. We can use the following results. The categories $\mathbf{Mod}_R$ and $\mathbf{grMod}_R$ are Grothendieck categories \cite{gradedrings}. A category of sheaves $\mathbf{Shv}(X,\mathbf{M})$ with values in a Grothendieck category $\mathbf{M}$ is itself a Grothendieck category \cite{gray}. A Grothendieck category has enough injectives \cite{tohoku}.  This property of the category of sheaves allows the construction of injective resultions.
\begin{definition}[Injective resolution] \label{939} 
Let $\mathcal{F}\in\mathbf{Shv}(X,\mathbf{M})$. An \emph{injective resolution} of $\mathcal{F}$ is any exact sequence 
\begin{equation*} %\label{491}
\begin{tikzcd}
0 \arrow[r] & \mathcal{F} \arrow[r] & \mathcal{I}^0 \arrow[r] & \mathcal{I}^1 \arrow[r] & \mathcal{I}^2 \arrow[r] & \cdots
\end{tikzcd}
\end{equation*}
such that $\mathcal{I}^n$ is an injective sheaf for all $n\in\mathbb{N}_0$. We use the short notation $\mathcal{I}^\bullet$ for the exact sequence starting at $\mathcal{I}^0$. In this notation an injective resolution of $\mathcal{F}$ can be written as 
\begin{equation*} %\label{829}
\begin{tikzcd}
0 \arrow[r] & \mathcal{F} \arrow[r] & \mathcal{I}^\bullet \, .
\end{tikzcd}
\end{equation*}
\end{definition}

\noindent 
One can interpret an injective resolution of an object as measuring the deviation of the object from being injective. If the corresponding category has enough injectives, one can inductively construct an injective resolution for every object. Hence, every sheaf has an injective resolution. The following is a key result about injective resolutions.
\begin{theorem} \label{171} \cite[Theorem 11.21]{cohomology}
If $\phi\colon\mathcal{F}\rightarrow\mathcal{G}$ is a sheaf morphism, $0\rightarrow \mathcal{F}\rightarrow\mathcal{L}^\bullet$ an exact sequence (a right resolution of $\mathcal{F}$) and $0\rightarrow \mathcal{G}\rightarrow\mathcal{I}^\bullet$ a cochain complex such that every $\mathcal{I}^n$ is injective, then there exists a cochain morphism $\psi^\bullet\colon\mathcal{L}^\bullet\rightarrow\mathcal{I}^\bullet$ lifting $\phi$. In other words, there exists a family of sheaf morphisms $\psi^\bullet=(\psi^k)_{k\in\mathbb{N}_0}$ such that the following diagram commutes
\begin{equation*} %\label{143}
\begin{tikzcd}
0 \arrow[r] & \mathcal{F} \arrow[r] \arrow[d,"\phi"] & \mathcal{L}^0 \arrow[r] \arrow[d,"\psi^0"] & \mathcal{L}^1 \arrow[r] \arrow[r] \arrow[d,"\psi^1"] & \mathcal{L}^2 \arrow[r] \arrow[d,"\psi^2"] \arrow[r] & \cdots \\
0 \arrow[r] & \mathcal{G} \arrow[r] & \mathcal{I}^0 \arrow[r] & \mathcal{I}^1 \arrow[r] & \mathcal{I}^2 \arrow[r] & \cdots
\end{tikzcd}
\end{equation*}
Moreover, this cochain morphism $\psi^\bullet$ is unique up to homotopy.
\end{theorem}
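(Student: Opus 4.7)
The plan is a standard degree-by-degree diagram chase in the abelian category $\mathbf{Shv}(X,\mathbf{M})$: I construct $\psi^\bullet$ inductively, at each stage using the exactness of the resolution $0\to\mathcal{F}\to\mathcal{L}^\bullet$ to identify a kernel with an image, and the injectivity of the relevant $\mathcal{I}^n$ to extend a partial morphism along a monomorphism. Write $\iota\colon\mathcal{F}\hookrightarrow\mathcal{L}^0$ and $j\colon\mathcal{G}\to\mathcal{I}^0$ for the augmentations and $d_\mathcal{L}^n$, $d_\mathcal{I}^n$ for the coboundaries.

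For existence, in degree zero the composition $j\circ\phi\colon\mathcal{F}\to\mathcal{I}^0$ extends along the monomorphism $\iota$, by injectivity of $\mathcal{I}^0$, to a morphism $\psi^0\colon\mathcal{L}^0\to\mathcal{I}^0$ with $\psi^0\circ\iota=j\circ\phi$. Inductively, suppose $\psi^0,\dots,\psi^n$ are already built with all squares up to level $n$ commutative. The composite $d_\mathcal{I}^n\circ\psi^n\colon\mathcal{L}^n\to\mathcal{I}^{n+1}$ vanishes on $\ker d_\mathcal{L}^n=\operatorname{im}d_\mathcal{L}^{n-1}$ (exactness of $\mathcal{L}^\bullet$) because $d_\mathcal{I}^n\circ\psi^n\circ d_\mathcal{L}^{n-1}=d_\mathcal{I}^n\circ d_\mathcal{I}^{n-1}\circ\psi^{n-1}=0$; for $n=0$ the analogous check $d_\mathcal{I}^0\circ\psi^0\circ\iota=d_\mathcal{I}^0\circ j\circ\phi=0$ uses $d_\mathcal{I}^0\circ j=0$, which is part of the complex assumption on $\mathcal{I}^\bullet$. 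Hence $d_\mathcal{I}^n\circ\psi^n$ descends to $\mathcal{L}^n/\ker d_\mathcal{L}^n\cong\operatorname{im}d_\mathcal{L}^n$, and injectivity of $\mathcal{I}^{n+1}$ extends this along $\operatorname{im}d_\mathcal{L}^n\hookrightarrow\mathcal{L}^{n+1}$ to the required $\psi^{n+1}$, which by construction satisfies $\psi^{n+1}\circ d_\mathcal{L}^n=d_\mathcal{I}^n\circ\psi^n$.

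For uniqueness up to homotopy, given two lifts $\psi^\bullet,\psi'^\bullet$ of $\phi$, set $\theta^\bullet\coloneqq\psi^\bullet-\psi'^\bullet$; this is a cochain morphism with $\theta^0\circ\iota=0$. I construct a cochain homotopy $s^n\colon\mathcal{L}^n\to\mathcal{I}^{n-1}$ by the same recipe. Start with $s^0=0$; inductively, the map $\theta^n-d_\mathcal{I}^{n-1}\circ s^n$ vanishes on $\ker d_\mathcal{L}^n$ (using that $\theta^\bullet$ intertwines differentials, the inductive identity $\theta^{n-1}=d_\mathcal{I}^{n-2}\circ s^{n-1}+s^n\circ d_\mathcal{L}^{n-1}$, and $d_\mathcal{I}^{n-1}\circ d_\mathcal{I}^{n-2}=0$), so it factors through $\operatorname{im}d_\mathcal{L}^n\hookrightarrow\mathcal{L}^{n+1}$, and injectivity of $\mathcal{I}^n$ supplies an $s^{n+1}$ with $\theta^n=d_\mathcal{I}^{n-1}\circ s^n+s^{n+1}\circ d_\mathcal{L}^n$.

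The main obstacle is purely bookkeeping: correctly separating out the degree-zero base case in which the augmentations $\iota$ and $j$ play the roles of $d_\mathcal{L}^{-1}$ and $d_\mathcal{I}^{-1}$, and verifying in each inductive step that the relevant composite vanishes on the appropriate kernel before invoking injectivity. No feature specific to sheaves appears in the argument: everything takes place in the abelian structure of $\mathbf{Shv}(X,\mathbf{M})$, and in fact the statement is the standard comparison theorem for injective resolutions valid in any abelian category with enough injectives.
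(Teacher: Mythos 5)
Your argument is correct: it is the standard comparison theorem for injective resolutions, carried out degree by degree in the abelian category $\mathbf{Shv}(X,\mathbf{M})$, with the base case and the inductive kernel/image identifications handled properly in both the existence and the homotopy-uniqueness parts. The paper itself gives no proof of this statement --- it is cited from the reference \cite[Theorem 11.21]{cohomology} --- and your proof is essentially the standard one found there, so there is nothing to reconcile.
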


\noindent
Note that Theorem \ref{171}, for $\phi=\text{id}\colon\mathcal{F}\rightarrow \mathcal{F}$, implies that any two injective resolutions of $\mathcal{F}$ are homotopy equivalent. This property is important for derived functors to be well-defined. We are now able to construct right derived functors.

\begin{definition}[Right derived functors of the global section functor] \label{656} 
Let $\phi\colon\mathcal{F}\rightarrow\mathcal{G}$ be a morphism in $\mathbf{Shv}(X,\mathbf{M})$. Then there exist injective resolutions $0\rightarrow \mathcal{F}\rightarrow\mathcal{I}^\bullet$ and $0\rightarrow \mathcal{G}\rightarrow\mathcal{J}^\bullet$ and, by Theorem \ref{171}, a cochain morphism $\psi^\bullet\colon\mathcal{I}^\bullet\rightarrow\mathcal{J}^\bullet$ lifting $\phi$, i.e.\ we have the following commutative diagram
\begin{equation} \label{482}
\begin{tikzcd}
0 \arrow[r]  & \mathcal{I}^0 \arrow[r] \arrow[d,"\psi^0"] & \mathcal{I}^1 \arrow[r] \arrow[r] \arrow[d,"\psi^1"] & \mathcal{I}^2 \arrow[r] \arrow[d,"\psi^2"] \arrow[r] & \cdots \\
0 \arrow[r] & \mathcal{J}^0 \arrow[r] & \mathcal{J}^1 \arrow[r] & \mathcal{J}^2 \arrow[r] & \cdots
\end{tikzcd} \, .
\end{equation}
If we apply the global section functor $\Gamma(X,-)$ to this diagram, we obtain the following commutative diagram in $\mathbf{M}$
\begin{equation} \label{873}
\begin{tikzcd}
0 \arrow[r]  & \Gamma(X,\mathcal{I}^0) \arrow[r] \arrow[d,"\Gamma (X\text{,}\psi^0)"] & \Gamma(X,\mathcal{I}^1) \arrow[r] \arrow[r] \arrow[d,"\Gamma (X\text{,}\psi^1)"] & \Gamma(X,\mathcal{I}^2) \arrow[r] \arrow[d,"\Gamma (X\text{,}\psi^2)"] \arrow[r] & \cdots \\
0 \arrow[r] & \Gamma(X,\mathcal{J}^0) \arrow[r] & \Gamma(X,\mathcal{J}^1)  \arrow[r] & \Gamma(X,\mathcal{J}^2)  \arrow[r] & \cdots 
\end{tikzcd}\, .
\end{equation}
Let $\Gamma(X,\mathcal{I}^\bullet)$ denote the upper row cochain complex and $\Gamma(X,\mathcal{J}^\bullet)$ denote the lower row cochain complex of (\ref{873}). Let $\Gamma(X,\psi^\bullet)\colon\Gamma(X,\mathcal{I}^\bullet)\rightarrow\Gamma(X,\mathcal{J}^\bullet)$ denote the induced cochain morphism, i.e.\ the family of vertical arrows of (\ref{873}). For every $k\in\mathbb{N}_0$, define the functor $R^k\Gamma(X,-)\colon\mathbf{Shv}(X,\mathbf{M})\rightarrow \mathbf{M}$ by:
\begin{equation*} %\label{515}
\begin{aligned} 
& \mathcal{F}\mapsto H^k\big(\Gamma(X,\mathcal{I}^\bullet)\big)  &\forall \mathcal{F}\in \mathbf{Shv}(X,\mathbf{M}) \text{ } \\
& (\phi\colon\mathcal{F}\rightarrow \mathcal{G}) \mapsto \Big(H^k\big(\Gamma(X,\psi^\bullet)\big)\colon H^k\big(\Gamma(X,\mathcal{I}^\bullet)\big)\rightarrow H^k\big(\Gamma(X,\mathcal{J}^\bullet)\big)\Big)  &\forall \phi \in \text{Hom}(\mathcal{F},\mathcal{G}) .
\end{aligned}
\end{equation*}
The functor $R^k\Gamma(X,-)$ is called the \emph{$k$-th right derived functor} of the global section functor.
\end{definition}

\noindent
With the help of Theorem \ref{171}, one can show that the right derived functors are well-defined, i.e.\ they are independent (up to natural isomorphism) of the choices of injective resolutions and lifting cochain morphisms \cite[Chapter 11.4]{cohomology}. Note that the injective resolutions in (\ref{482}) are exact sequences of sheaves but, since the global section functor $\Gamma(X,-)$ is not exact in general (it is only left exact), the rows of (\ref{873}) are not exact anymore. The cohomology of the cochain complexes in (\ref{873}) measures the deviation of the complexes from being exact. Hence, the right derived functors measure the deviation of the global section functor from being exact on injective resolutions of $\mathcal{F}$.

\subsection{The construction of morphisms induced by continuous maps in sheaf cohomology} \label{710}

In this section we discuss how to construct the morphisms $H^k(f)\colon H^k(Y,\mathcal{F})\rightarrow H^k(X,f^*\mathcal{F})$ induced by a continuous map $f\colon X\rightarrow Y$ in sheaf cohomology for a sheaf $\mathcal{F}\in \mathbf{Shv}(Y,\mathbf{M})$.

Let $0\rightarrow \mathcal{F}\rightarrow \mathcal{I}^\bullet$ be an injective resolution of $\mathcal{F}$. By Theorem \ref{573}, the inverse image functor $f^*$ is left adjoint to the direct image functor $f_*$. Hence, there is a natural transformation $\eta\colon\text{id}\rightarrow f_*f^*$ called the unit of the adjunction. This natural transformation induces a sheaf morphism $\eta_{\mathcal{G}}\colon\mathcal{G}\rightarrow f_*f^*\mathcal{G}$ for every $\mathcal{G}\in\mathbf{Shv}(Y,\mathbf{M})$. By applying the functor $f_*f^*$ on the diagram $0\rightarrow \mathcal{F}\rightarrow \mathcal{I}^\bullet$ and by the naturality of $\eta$, we obtain the following commutative diagram  
\begin{equation} \label{202}
\begin{tikzcd}
0 \arrow[r] & \mathcal{F} \arrow[r] \arrow[d,"\eta_{\mathcal{F}}"] & \mathcal{I}^0 \arrow[r] \arrow[d,"\eta_{\mathcal{I}^0}"] & \mathcal{I}^1 \arrow[r] \arrow[d,"\eta_{\mathcal{I}^1}"] & \cdots \\
0 \arrow[r] & f_*f^*\mathcal{F} \arrow[r] & f_*f^*\mathcal{I}^0 \arrow[r] & f_*f^*\mathcal{I}^1 \arrow[r] & \cdots
\end{tikzcd} \, .
\end{equation}
Now let $0\rightarrow f^*\mathcal{F}\rightarrow \mathcal{J}^\bullet$ be an injective resolution of $f^*\mathcal{F}$. By Theorem \ref{573}, the functor $f^*$ is exact, hence applying $f^*$ on the diagram $0\rightarrow \mathcal{F}\rightarrow \mathcal{I}^\bullet$ yields the exact sequence $0\rightarrow f^*\mathcal{F}\rightarrow f^*\mathcal{I}^\bullet$. By Theorem \ref{171}, there is a cochain morphism $g^\bullet=(g^n)_{n\in\mathbb{N}_0}\colon f^*\mathcal{I}^\bullet\rightarrow \mathcal{J}^\bullet$ lifting $\text{id}\colon f^*\mathcal{F}\rightarrow f^*\mathcal{F}$, i.e.\ a commutative diagram 
\begin{equation} \label{719}
\begin{tikzcd}
0 \arrow[r] & f^*\mathcal{F} \arrow[r] \arrow[d,"\text{id}"] & f^*\mathcal{I}^0 \arrow[r] \arrow[d,"g^0"] &  f^*\mathcal{I}^1 \arrow[r] \arrow[d,"g^1"] & \cdots \\
0 \arrow[r] & f^*\mathcal{F} \arrow[r] & \mathcal{J}^0 \arrow[r] &  \mathcal{J}^1 \arrow[r] & \cdots
\end{tikzcd} \, .
\end{equation}
We can now apply the global section functor $\Gamma(Y,-)$ on (\ref{202}) to obtain the upper part of (\ref{228}) and the global section functor $\Gamma(X,-)$ on (\ref{719}) to obtain the lower part of (\ref{228}). Moreover, we observe that for every sheaf $\mathcal{G}$ on $Y$ we have $\Gamma(Y,f_*f^*\mathcal{G})=(f_*f^*\mathcal{G})(Y)=(f^*\mathcal{G})(f^{-1}(Y))=(f^*\mathcal{G})(X)=\Gamma(X,f^*\mathcal{G})$. Hence, we can connect the image of (\ref{202}) and (\ref{719}) under the global section functors to obtain   
\begin{equation} \label{228}
\begin{tikzcd}
0 \arrow[r] & \Gamma(Y,\mathcal{F}) \arrow[r] \arrow[d,"\Gamma (Y\text{,}\eta_{\mathcal{F}})"] & \Gamma(Y,\mathcal{I}^0) \arrow[r] \arrow[d,"\Gamma (Y\text{,}\eta_{\mathcal{I}^0})"] & \Gamma(Y,\mathcal{I}^1) \arrow[r] \arrow[d,"\Gamma (Y\text{,}\eta_{\mathcal{I}^1})"] & \cdots  \\
0 \arrow[r] & \Gamma(Y,f_*f^*\mathcal{F}) \arrow[r] \arrow[d,"="] & \Gamma(Y,f_*f^*\mathcal{I}^0) \arrow[r] \arrow[d,"="] & \Gamma(Y,f_*f^*\mathcal{I}^1) \arrow[r] \arrow[d,"="] & \cdots \\
0 \arrow[r] & \Gamma(X,f^*\mathcal{F}) \arrow[r] \arrow[d,"\text{id}"] & \Gamma(X,f^*\mathcal{I}^0) \arrow[r] \arrow[d,"\Gamma(X\text{,}g^0)"] &  \Gamma(X,f^*\mathcal{I}^1) \arrow[r] \arrow[d,"\Gamma(X\text{,}g^1)"] & \cdots \\
0 \arrow[r] & \Gamma(X,f^*\mathcal{F}) \arrow[r] & \Gamma(X,\mathcal{J}^0) \arrow[r] &  \Gamma(X,\mathcal{J}^1) \arrow[r] & \cdots
\end{tikzcd} \, .
\end{equation}
From (\ref{228}) we get a composition of cochain morphisms
\begin{equation*} %\label{816}
\begin{tikzcd}[column sep=large]
\Gamma(Y,\mathcal{I}^\bullet) \arrow[r,"\Gamma(Y\text{,}\eta^\bullet)"] & \Gamma(X,f^*\mathcal{I}^\bullet) \arrow[r,"\Gamma(X\text{,}g^\bullet)"] & \Gamma(X,\mathcal{J}^\bullet)
\end{tikzcd}
\end{equation*}
where $\eta^\bullet=(\eta_{\mathcal{I}^n})_{n\in\mathbb{N}_0}$. By applying the cohomology functor on this diagram of cochain complexes, for every $k\in\mathbb{N}_0$, we obtain
\begin{equation*} %\label{146}
\begin{tikzcd}[column sep=large]
H^k(Y,\mathcal{F})=H^k\big(\Gamma(Y,\mathcal{I}^\bullet)\big) \arrow[r,"H^k(\Gamma(Y\text{,}\eta^\bullet))"] &[10pt] H^k\big(\Gamma(X,f^*\mathcal{I}^\bullet)\big) \arrow[r,"H^k(\Gamma(X\text{,}g^\bullet))"] &[10pt] H^k\big(\Gamma(X,\mathcal{J}^\bullet)\big)=H^k(X,f^*\mathcal{F})
\end{tikzcd} \, .
\end{equation*}
Therefore, for every $k\in\mathbb{N}_0$, the composition defines a natural morphism 
\begin{equation*} %\label{904}
H^k(f)\coloneqq H^k\big(\Gamma(X,g^\bullet)\big)\circ H^k\big(\Gamma(Y,\eta^\bullet)\big)\colon H^k(Y,\mathcal{F})\rightarrow H^k(X,f^*\mathcal{F})
\end{equation*}
on sheaf cohomology modules. Of course one needs to show that these induced morphisms are well-defined, i.e.\ the construction does not depend on the choice of injective resolutions and lifting morphisms. This can be done by using the uniqueness up to homotopy in Theorem \ref{171}.

\subsection{Proof of Theorem \ref{511}} \label{295}

\begin{theorem} 
Let $f\colon X\rightarrow Y$ be a continuous map and $\phi\colon\mathcal{F}\rightarrow \mathcal{G}$ a morphism of sheaves on $Y$. Then, for all $k\in\mathbb{N}_0$, the following diagram commutes
\begin{equation*} %\label{685}
\begin{tikzcd}[column sep=huge]
H^k(Y,\mathcal{F}) \arrow[r,"H^k(Y\text{,}\phi)"] \arrow[d,swap,"H^k(f\text{,}\mathcal{F})"] & H^k(Y,\mathcal{G}) \arrow[d,"H^k(f\text{,}\mathcal{G})"] \\
H^k(X,f^*\mathcal{F}) \arrow[r,"H^k(X\text{,}f^*\phi)"] & H^k(X,f^*\mathcal{G}) 
\end{tikzcd}
\end{equation*}
i.e.\ the morphisms induced by continuous maps and sheaf morphisms commute.
\end{theorem}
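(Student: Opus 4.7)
The plan is to construct a cube-shaped diagram of cochain complexes whose four side faces strictly commute (via naturality of the unit of the adjunction $f^* \dashv f_*$ and standard applications of Theorem \ref{171}), whose top face commutes strictly, and whose bottom face commutes up to homotopy. Applying $\Gamma$ and then $H^k$ will turn this into the required commutative square of modules.

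First, I would choose injective resolutions $0 \to \mathcal{F} \to \mathcal{I}^\bullet$ and $0 \to \mathcal{G} \to \mathcal{K}^\bullet$ in $\mathbf{Shv}(Y,\mathbf{M})$, and injective resolutions $0 \to f^*\mathcal{F} \to \mathcal{J}^\bullet$ and $0 \to f^*\mathcal{G} \to \mathcal{L}^\bullet$ in $\mathbf{Shv}(X,\mathbf{M})$. By Theorem \ref{171}, lift $\phi$ to a cochain morphism $\phi^\bullet \colon \mathcal{I}^\bullet \to \mathcal{K}^\bullet$. Since $f^*$ is exact (Theorem \ref{573}), $0 \to f^*\mathcal{F} \to f^*\mathcal{I}^\bullet$ and $0 \to f^*\mathcal{G} \to f^*\mathcal{K}^\bullet$ are resolutions (in fact exact sequences with each term a sheaf, not necessarily injective), so Theorem \ref{171} yields cochain morphisms $g^\bullet \colon f^*\mathcal{I}^\bullet \to \mathcal{J}^\bullet$ lifting $\mathrm{id}_{f^*\mathcal{F}}$ and $h^\bullet \colon f^*\mathcal{K}^\bullet \to \mathcal{L}^\bullet$ lifting $\mathrm{id}_{f^*\mathcal{G}}$. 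Finally, lift $f^*\phi$ to a cochain morphism $\psi^\bullet \colon \mathcal{J}^\bullet \to \mathcal{L}^\bullet$. Per Section \ref{710}, $H^k(f,\mathcal{F}) = H^k(\Gamma(X,g^\bullet)) \circ H^k(\Gamma(Y,\eta^\bullet_{\mathcal{I}}))$, with an analogous formula for $H^k(f,\mathcal{G})$, and $H^k(X,f^*\phi) = H^k(\Gamma(X,\psi^\bullet))$, $H^k(Y,\phi) = H^k(\Gamma(Y,\phi^\bullet))$.

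Next I would assemble the diagram. Naturality of the unit $\eta \colon \mathrm{id} \to f_*f^*$ gives a strictly commuting square of cochain morphisms
\begin{equation*}
\begin{tikzcd}
\mathcal{I}^\bullet \arrow[r,"\phi^\bullet"] \arrow[d,"\eta^\bullet_{\mathcal{I}}"] & \mathcal{K}^\bullet \arrow[d,"\eta^\bullet_{\mathcal{K}}"] \\
f_*f^*\mathcal{I}^\bullet \arrow[r,"f_*f^*\phi^\bullet"] & f_*f^*\mathcal{K}^\bullet
\end{tikzcd}
\end{equation*}
in $\mathbf{coCh}(\mathbf{Shv}(Y,\mathbf{M}))$. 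Using the identification $\Gamma(Y, f_*(-)) = \Gamma(X, -)$, applying $f_*$ and the adjunction yields the analogous square for $f^*\mathcal{I}^\bullet \to f^*\mathcal{K}^\bullet$ under $f^*\phi^\bullet$. The key remaining fact is that
\[
\psi^\bullet \circ g^\bullet \;\simeq\; h^\bullet \circ f^*\phi^\bullet \colon f^*\mathcal{I}^\bullet \longrightarrow \mathcal{L}^\bullet,
\]
which holds because both cochain morphisms lift the single morphism $f^*\phi \colon f^*\mathcal{F} \to f^*\mathcal{G}$, and the uniqueness clause of Theorem \ref{171} forces any two such lifts to be chain homotopic (here using that each $\mathcal{L}^n$ is injective).

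Finally, I would apply $\Gamma(Y,-)$ to the top square and $\Gamma(X,-)$ to the bottom square, splice them along the identification $\Gamma(Y,f_*f^*(-)) = \Gamma(X,f^*(-))$, and pass to $H^k$. Strict commutativity of the top square and of the interface becomes strict commutativity after applying cohomology, while the homotopy $\psi^\bullet \circ g^\bullet \simeq h^\bullet \circ f^*\phi^\bullet$ becomes an equality after $H^k(\Gamma(X,-))$ since chain-homotopic morphisms induce equal maps on cohomology. Composing the resulting identities yields
\[
H^k(X,f^*\phi) \circ H^k(f,\mathcal{F}) \;=\; H^k(f,\mathcal{G}) \circ H^k(Y,\phi),
\]
as required. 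The main obstacle is purely bookkeeping: keeping track of four resolutions and three lifting morphisms while verifying that the one non-strict face of the cube commutes only up to homotopy; this is resolved by a single application of the uniqueness part of Theorem \ref{171}, and everything else follows from naturality.
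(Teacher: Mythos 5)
Your proposal is correct and follows essentially the same route as the paper's proof: four injective resolutions, naturality of the unit $\eta\colon\mathrm{id}\to f_*f^*$ for the strictly commuting faces, and the uniqueness-up-to-homotopy clause of Theorem \ref{171} applied to the two lifts of $f^*\phi$ into the injective resolution of $f^*\mathcal{G}$, followed by $\Gamma$ (which preserves homotopies, being additive) and $H^k$. The only cosmetic difference is notation for the resolutions.
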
 

\begin{proof}
Let $0\rightarrow\mathcal{F}\rightarrow \mathcal{I}^\bullet$, $0\rightarrow\mathcal{G}\rightarrow \mathcal{J}^\bullet$, $0\rightarrow f^*\mathcal{F}\rightarrow \overline{\mathcal{I}}^\bullet$ and $0\rightarrow f^*\mathcal{G}\rightarrow \overline{\mathcal{J}}^\bullet$ be injective resolutions of $\mathcal{F}$, $\mathcal{G}$, $f^*\mathcal{F}$ and $f^*\mathcal{G}$, respectively. The following construction is similar to the one in Section \ref{710} with an additional dimension obtained by the cochain morphism $\mathcal{I}^\bullet\rightarrow\mathcal{J}^\bullet$ induced on injective resolutions by the sheaf morphism $\phi\colon\mathcal{F}\rightarrow\mathcal{G}$ (see \ref{656}). The natural transformation $\eta\colon\text{id}\rightarrow f_*f^*$ yields the following commutative diagram of sheaves on $Y$
\begin{equation} \label{488}
\begin{tikzcd}[column sep=tiny,row sep=small]
& 0 \arrow[rr] & & \mathcal{G} \arrow[rr] \arrow[dd,"\eta_\mathcal{G}"{yshift=10pt}] & & \mathcal{J}^0 \arrow[rr] \arrow[dd] & & \mathcal{J}^1 \arrow[rr] \arrow[dd] & & \cdots
\\
0 \arrow[rr] & & \mathcal{F} \arrow[rr,crossing over] \arrow[ur,"\phi"{yshift=-2pt}]  & & \mathcal{I}^0 \arrow[rr,crossing over] \arrow[ur] & & \mathcal{I}^1 \arrow[rr,crossing over] \arrow[ur] & & \cdots \\
& 0 \arrow[rr] & & f_*f^*\mathcal{G} \arrow[rr] & & f_*f^*\mathcal{J}^0 \arrow[rr] & & f_*f^*\mathcal{J}^1 \arrow[rr] & & \cdots
\\
0 \arrow[rr] & & f_*f^*\mathcal{F} \arrow[rr] \arrow[ur,swap,"f_*f^*\phi"{yshift=2.5pt}] \arrow[uu,<-,swap,"\eta_\mathcal{F}"{yshift=10pt},crossing over] & & f_*f^*\mathcal{I}^0 \arrow[rr] \arrow[ur] \arrow[uu,<-,crossing over] & & f_*f^*\mathcal{I}^1 \arrow[rr] \arrow[ur] \arrow[uu,<-,crossing over] & & \cdots
\end{tikzcd} \, .
\end{equation}
Moreover, we get the not necessarily commutative diagram (\ref{807}), where the cochain morphism on the top is obtained by applying $f^*$ on the top of (\ref{488}) and the other cochain morphisms are obtain from Theorem \ref{171}. Note that both compositions of cochain morphisms in (\ref{807}) from $f^*\mathcal{I}^\bullet$ to $\overline{\mathcal{J}}^\bullet$ lift $f^*\phi$, hence, by Theorem \ref{171}, they are homotopic.
\begin{equation} \label{807}
\begin{tikzcd}[column sep=small,row sep=small]
& 0 \arrow[rr] & & f^*\mathcal{G} \arrow[rr] \arrow[dd,"\text{id}"{yshift=10pt}] & & f^*\mathcal{J}^0 \arrow[rr] \arrow[dd] & & f^*\mathcal{J}^1 \arrow[rr] \arrow[dd] & & \cdots
\\
0 \arrow[rr] & & f^*\mathcal{F} \arrow[rr,crossing over] \arrow[ur,"f^*\phi"{yshift=-2pt}] & & f^*\mathcal{I}^0 \arrow[rr,crossing over] \arrow[ur] & & f^*\mathcal{I}^1 \arrow[rr,crossing over] \arrow[ur] & & \cdots \\
& 0 \arrow[rr] & & f^*\mathcal{G} \arrow[rr] & & \overline{\mathcal{J}}^0 \arrow[rr] & & \overline{\mathcal{J}}^1 \arrow[rr] & & \cdots
\\
0 \arrow[rr] & & f^*\mathcal{F} \arrow[rr] \arrow[ur,swap,"f^*\phi"{yshift=2pt}] \arrow[uu,<-,swap,"\text{id}"{yshift=10pt},crossing over]  & & \overline{\mathcal{I}}^0 \arrow[rr] \arrow[ur] \arrow[uu,<-,crossing over] & & \overline{\mathcal{I}}^1 \arrow[rr] \arrow[ur] \arrow[uu,<-,crossing over] & & \cdots 
\end{tikzcd} \, .
\end{equation}
By applying the global section functor $\Gamma(Y,-)$ on (\ref{488}) and the global section functor $\Gamma(X,-)$ on (\ref{807}), we can combine the two diagrams to obtain the following diagram

\begin{equation*} %\label{177}
\begin{tikzcd}[column sep=tiny,row sep=small]
& 0 \arrow[rr] & & \Gamma(Y,\mathcal{J}^0) \arrow[rr] \arrow[dd] & & \Gamma(Y,\mathcal{J}^1) \arrow[rr] \arrow[dd] & & \cdots
\\
0 \arrow[rr]  & & \Gamma(Y,\mathcal{I}^0) \arrow[rr,crossing over] \arrow[ur] & & \Gamma(Y,\mathcal{I}^1) \arrow[rr,crossing over] \arrow[ur] & & \cdots \\
& 0 \arrow[rr] & & \Gamma(X,f^*\mathcal{J}^0) \arrow[rr] \arrow[dd] & & \Gamma(X,f^*\mathcal{J}^1) \arrow[rr] \arrow[dd] & & \cdots
\\
0 \arrow[rr]  & & \Gamma(X,f^*\mathcal{I}^0) \arrow[rr,crossing over] \arrow[ur] \arrow[uu,<-,crossing over] & & \Gamma(X,f^*\mathcal{I}^1) \arrow[rr,crossing over] \arrow[ur] \arrow[uu,<-,crossing over] & & \cdots \\
& 0 \arrow[rr] & &  \Gamma(X,\overline{\mathcal{J}}^0) \arrow[rr] & & \Gamma(X,\overline{\mathcal{J}}^1) \arrow[rr] & & \cdots
\\
0 \arrow[rr] & & \Gamma(X,\overline{\mathcal{I}}^0) \arrow[rr] \arrow[ur] \arrow[uu,<-,crossing over] & & \Gamma(X,\overline{\mathcal{I}}^1) \arrow[rr] \arrow[ur] \arrow[uu,<-,crossing over] & & \cdots 
\end{tikzcd} \, .
\end{equation*}
This diagram is not necessarily commutative but all possible different paths are homotopic since the global section functor is additive and therefore preserves homotopy. By applying the cohomology functor and by the fact that homotopic cochain morphisms induce the same maps in cohomology, we obtain the following commutative diagram
\begin{equation*} %\label{118}
\begin{tikzcd}
H^k(Y,\mathcal{F}) \arrow[r,"H^k(Y\text{,}\phi)"] \arrow[d] &[10pt] H^k(Y,\mathcal{G}) \arrow[d] \\
H^k\big(\Gamma(X,f^*\mathcal{I}^\bullet)\big) \arrow[r] \arrow[d] & H^k\big(\Gamma(X,f^*\mathcal{J}^\bullet)\big) \arrow[d] \\
H^k(X,f^*\mathcal{F}) \arrow[r,"H^k(X\text{,}f^*\phi)"] & H^k(X,f^*\mathcal{G})
\end{tikzcd}
\end{equation*}
where the compositions of the vertical arrows are the induced morphisms $H^k(f,\mathcal{F})\colon H^k(Y,\mathcal{F})\rightarrow H^k(X,f^*\mathcal{F})$ and $H^k(f,\mathcal{G})\colon H^k(Y,\mathcal{G})\rightarrow H^k(X,f^*\mathcal{G})$ as constructed in Section \ref{710}.
\end{proof} 

\subsection{Construction of \v{C}ech cohomology} \label{967}

In Section \ref{658} we discussed the general definition of sheaf cohomology. The problem with the derived functor construction is that it is not very practical. Although it might be possible to construct injective resolutions explicitly, one might hope for a simpler method to compute sheaf cohomology. \v{C}ech cohomology is an alternative cohomology theory for (pre)sheaves which is much easier to construct in many situations. For a general reference on \v{Cech} cohomology see \cite{bott,cohomology}. \v{C}ech cohomology is defined in two steps. In the first step one defines \v{C}ech cohomology with respect to a fixed open cover of a topological space $X$ and in the second step one defines the \v{C}ech cohomology with respect to $X$ via a colimit over all open covers.

If $\mathcal{U}=(U_i)_{i\in I}$ is an open cover of a space $X$ with an ordered index set $I$ and $k\in\mathbb{N}_0$, we define $N^k_{\mathcal{U}}\coloneqq \{\sigma=(i_0,...,i_k)\text{ }|\text{ }i_0<\ldots <i_k \in I\colon U_{i_0}\cap\ldots\cap U_{i_k}\neq\emptyset\}$ and $U_\sigma\coloneqq U_{i_0}\cap\ldots\cap U_{i_k}$ for $\sigma\in N^k_\mathcal{U}$, i.e.\ $N^k_\mathcal{U}$ is the set of $k$-simplices of the nerve complex $N_\mathcal{U}$ of $\mathcal{U}$. For $\sigma=(i_0,\ldots,i_{k+1})\in N_\mathcal{U}^{k+1}$ we define by  $\partial_j\sigma\coloneqq (i_0,\ldots,\hat{i}_j,\ldots,i_{k+1})\in N^{k}_\mathcal{U}$ the $j$-th simplex in the boundary of $\sigma$.

\begin{definition}[$\check{\text{C}}$ech complex] \label{271} 
Let $X$ be a topological space, $\mathcal{U}=(U_i)_{i\in I}$ an open cover of $X$ with an ordered index set $I$ and $\mathcal{F}$ a (pre)sheaf with values in a category $\mathbf{M}$. For every $k\in \mathbb{N}_0$, define the $k$-dimensional \emph{\v{C}ech-cochains} of $\mathcal{F}$ with respect to the cover $\mathcal{U}$ in the following way
\begin{equation*} %\label{919}
C^k(\mathcal{U},\mathcal{F})\coloneqq\prod_{\sigma\in N^k_\mathcal{U}} \mathcal{F}(U_\sigma) \quad .
\end{equation*}

\noindent
We define the $k$-th \emph{coboundary morphism} $\delta^k\colon C^k(\mathcal{U},\mathcal{F})\rightarrow C^{k+1}(\mathcal{U},\mathcal{F})$  by
\begin{equation*} %\label{627}
\delta^k\coloneqq \prod_{\tau\in N^{k+1}_\mathcal{U}}\Big(\sum_{j=0}^{k+1}(-1)^j\mathcal{F}(U_\tau\xhookrightarrow{} U_{\partial_j\tau})\circ p_{\partial_j\tau}\Big) \quad .
\end{equation*}
The \v{C}ech-cochains together with the coboundary morphisms form the cochain complex $C^\bullet(\mathcal{U},\mathcal{F})$ 
\begin{equation*} %\label{726}
\begin{tikzcd}
0 \arrow[r] & C^0(\mathcal{U},\mathcal{F}) \arrow[r,"\delta^0"] & C^1(\mathcal{U},\mathcal{F}) \arrow[r,"\delta^1"] & C^2(\mathcal{U},\mathcal{F}) \arrow[r,"\delta^2"] & \cdots 
\end{tikzcd} \, .
\end{equation*}
The cohomology of this cochain complex  
\begin{equation*} %\label{764}
\check{H}^k(\mathcal{U},\mathcal{F})\coloneqq H^k\big(C^\bullet(\mathcal{U},\mathcal{F})\big)
\end{equation*}
is called the $k$-th \emph{\v{C}ech cohomology} of the (pre)sheaf $\mathcal{F}$ with respect to the cover $\mathcal{U}$.
\end{definition} 

\noindent
For our applications of \v{C}ech cohomology in the context of persistence we also need a notion of induced morphisms. 

\begin{definition}[Induced morphisms by (pre)sheaf morphisms] \label{115} 
Let $\phi\colon\mathcal{F}\rightarrow \mathcal{G}$ be a morphism of (pre)sheaves on a topological space $X$ and $\mathcal{U}=(U_i)_{i\in I}$ an open cover of $X$. For all $k\in\mathbb{N}_0$, define the morphism $\phi^k\colon C^k(\mathcal{U},\mathcal{F})\rightarrow C^k(\mathcal{U},\mathcal{G})$ by
\begin{equation*} %\label{390}
\phi^k\coloneqq\prod_{\sigma\in N_\mathcal{U}^k}  \phi_{U_{\sigma}}\circ p_\sigma  \quad .
\end{equation*}
Then the following diagram commutes 
\begin{equation*} %\label{991}
\begin{tikzcd}
C^k(\mathcal{U},\mathcal{F}) \arrow[r,"\delta^k"] \arrow[d,swap,"\phi^k"] & C^{k+1}(\mathcal{U},\mathcal{F}) \arrow[d,"\phi^{k+1}"]  \\
C^k(\mathcal{U},\mathcal{G}) \arrow[r,"\delta^k"] & C^{k+1}(\mathcal{U},\mathcal{G})
\end{tikzcd}
\end{equation*}
i.e.\ $\phi^\bullet=(\phi^k)_{k\in\mathbb{N}_0}\colon C^\bullet(\mathcal{U},\mathcal{F})\rightarrow C^\bullet(\mathcal{U},\mathcal{G})$ is a cochain morphism. This cochain morphism induces a morphism $\check{H}^k(\mathcal{U},\phi)\colon\check{H}^k(\mathcal{U},\mathcal{F})\rightarrow \check{H}^k(\mathcal{U},\mathcal{G}) \nonumber $ in cohomology defined by
\begin{equation*} %\label{860}
\check{H}^k(\mathcal{U},\phi)\coloneqq H^k(\phi^\bullet)\colon H^k\big(C^\bullet(\mathcal{U},\mathcal{F})\big)\rightarrow H^k\big(C^\bullet(\mathcal{U},\mathcal{G})\big) \quad .
\end{equation*}  
\end{definition}  

\noindent
The action of $\check{H}^k(\mathcal{U},-)$ on (pre)sheaves of Definition \ref{271} and the action of $\check{H}^k(\mathcal{U},-)$ on (pre)sheaf morphisms of Definition \ref{115} defines a functor $\check{H}^k(\mathcal{U},-)\colon\mathbf{pShv}(X,\mathbf{M})\rightarrow \mathbf{M}$. 

Now that we have defined \v{C}ech cohomology with respect to a fixed open cover, we still have to relate the \v{C}ech cohomology modules with respect to different open covers. Let $\mathcal{U}=(U_i)_{i\in I}$ and $\mathcal{V}=(V_j)_{j\in J}$ be open covers of $X$. Then $\mathcal{V}$ refines $\mathcal{U}$ (denoted $\mathcal{U}\leq \mathcal{V}$), if there is a map $r\colon J\rightarrow I$ such that $V_j\subseteq U_{r(j)}$ for all $j\in J$. Note that $r$ extends to a map $r\colon N_\mathcal{V}^k\rightarrow N_\mathcal{U}^k$ defined by $\sigma=(j_0,\ldots,j_k)\mapsto \big(r(j_0),\ldots,r(j_k)\big)=r(\sigma)$.

\begin{definition}[Induced morphisms by refinements] \label{709}
Let $X$ be a topological space, $\mathcal{F}$ a (pre)sheaf on $X$ and $\mathcal{U}=(U_i)_{i\in I}$ and $\mathcal{V}=(V_j)_{j\in J}$ open covers of $X$ such that $\mathcal{V}$ is a refinement of $\mathcal{U}$. For $k\in\mathbb{N}_0$, define a morphism $r^k\colon C^k(\mathcal{U},\mathcal{F})\rightarrow C^k(\mathcal{V},\mathcal{F})$ by 
\begin{equation*} %\label{327}
r^k\coloneqq \prod_{\sigma\in N_\mathcal{V}^k}\big(\mathcal{F}(V_{\sigma}\xhookrightarrow{}U_{r(\sigma)})\circ p_{r(\sigma)}\big) \quad .
\end{equation*}  
Then, for all $k\in\mathbb{N}_0$, the following diagram commutes 
\begin{equation*} %\label{153}
\begin{tikzcd}
C^k(\mathcal{U},\mathcal{F}) \arrow[r,"\delta^k"] \arrow[d,swap,"r^k"] & C^{k+1}(\mathcal{U},\mathcal{F}) \arrow[d,"r^{k+1}"]  \\
C^k(\mathcal{V},\mathcal{G}) \arrow[r,"\delta^k"] & C^{k+1}(\mathcal{V},\mathcal{G})
\end{tikzcd}
\end{equation*}
i.e.\ $r^\bullet=(r^k)_{k\in\mathbb{N}_0}\colon C^\bullet(\mathcal{U},\mathcal{F})\rightarrow C^\bullet(\mathcal{U},\mathcal{G})$ is a cochain morphism. This cochain morphism induces a morphism $\check{H}^k(r,\mathcal{F})\coloneqq H^k(r^\bullet)\colon\check{H}^k(\mathcal{U},\mathcal{F})\rightarrow \check{H}^k(\mathcal{V},\mathcal{F})$ in cohomology.
\end{definition}

\noindent
The following diagram involving the morphisms of Definition \ref{115} and \ref{709} commutes
\begin{equation*} %\label{888}
\begin{tikzcd}[column sep=large]
\check{H}^k(\mathcal{U},\mathcal{F}) \arrow[r,"\check{H}^k(r\text{,}\mathcal{F})"] \arrow[d,swap,"\check{H}^k(\mathcal{U}\text{,}\phi)"] & \check{H}^k(\mathcal{V},\mathcal{F}) \arrow[d,"\check{H}^k(\mathcal{V}\text{,}\phi)"]  \\
\check{H}^k(\mathcal{U},\mathcal{G}) \arrow[r,"\check{H}^k(r\text{,}\mathcal{G})"] & \check{H}^k(\mathcal{V},\mathcal{G})
\end{tikzcd}
\end{equation*}
i.e.\ we have a natural transformation $\check{H}^k(r,-)\colon \check{H}^k(\mathcal{U},-)\rightarrow \check{H}^k(\mathcal{V},-)$ in $\mathbf{Fun}\big(\mathbf{pShv}(X,\mathbf{M}),\mathbf{M}\big)$. This implies that the assignments $\mathcal{U}\mapsto \check{H}^k(\mathcal{U},-)$ and $r\mapsto \check{H}^k(r,-)$ define a functor $\mathbf{Cover}(X)\rightarrow\mathbf{Fun}\big(\mathbf{pShv}(X,\mathbf{M}),\mathbf{M}\big)$ where $\mathbf{Cover}(X)$ denotes the category of open covers of $X$ with arrows given by refinement relations. Now we are able to define \v{C}ech cohomology on $X$.

\begin{definition}[$\check{\text{C}}$ech cohomology] \label{824}
Let $X$ be a topological space. For every $k\in\mathbb{N}_0$, define the $k$-th \emph{\v{C}ech cohomology functor} $\check{H}^k(X,-)\colon\mathbf{pShv}(X,\mathbf{M})\rightarrow \mathbf{M}$ by
\begin{equation*} %\label{655}
\check{H}^k(X,-)\coloneqq\underset{\mathcal{U}}{\text{colim }}\check{H}^k(\mathcal{U},-) \quad .
\end{equation*}
\end{definition}

\subsection{The construction of morphisms induced by continuous maps in \v{C}ech cohomology} \label{809}

Also the previous construction of induced morphisms $H^k(f)\colon H^k(Y,\mathcal{F})\rightarrow H^k(X,f^*\mathcal{F})$ might be a little bit unhandy. Fortunately, we can also construct such morphisms using \v{C}ech cohohomology.

Let $\mathcal{F}$ be a sheaf on $Y$ and $\mathcal{U}=(U_i)_{i\in I}$ an open cover of $Y$. By Theorem \ref{573}, the functor $f^*$ is left adjoint to $f^*$. Hence, the unit $\eta\colon\text{id}\rightarrow f_*f^*$ of the adjunction induces a sheaf morphism $\eta_\mathcal{F}\colon\mathcal{F}\rightarrow f_*f^*\mathcal{F}$. By Definition \ref{115}, for every $k\in\mathbb{N}_0$, such a sheaf morphism induces a cochain morphism $\eta_\mathcal{F}^\bullet\colon C^\bullet(\mathcal{U},\mathcal{F})\rightarrow C^\bullet(\mathcal{U},f_*f^*\mathcal{F})$ and, moreover, a morphism in \v{C}ech cohomology $\check{H}^k(\mathcal{U},\eta_\mathcal{F})\colon\check{H}^k(\mathcal{U},\mathcal{F})\rightarrow\check{H}^k(\mathcal{U},f_*f^*\mathcal{F})$. Note that $f^{-1}(\mathcal{U})\coloneqq(f^{-1}(U_i))_{i\in I}$ is an open cover of $X$ and
\begin{equation*} %\label{221}
\begin{aligned} 
C^k(\mathcal{U},f_*f^*\mathcal{F})&\cong\prod_{i_0<\ldots<i_k\in I}f_*f^*\mathcal{F}(U_{(i_0,\ldots ,i_k)})\cong\prod_{i_0<\ldots<i_k\in I}f^*\mathcal{F}\big(f^{-1}(U_{(i_0,\ldots ,i_k)})\big) \\
&\cong\prod_{i_0<\ldots<i_k\in I}f^*\mathcal{F}\big(f^{-1}(U_{i_0})\cap\ldots\cap f^{-1}(U_{i_k})\big)\cong C^k(f^{-1}(\mathcal{U}),f^*\mathcal{F}) \quad .
\end{aligned}
\end{equation*}
Hence, for every $k\in\mathbb{N}_0$, we get $\check{H}^k(\mathcal{U},f_*f^*\mathcal{F})\cong\check{H}^k(f^{-1}(\mathcal{U}),f^*\mathcal{F})$
and, therefore, a morphism 
\begin{equation*} %\label{193}
\check{H}^k_\mathcal{U}(f)\colon\check{H}^k(\mathcal{U},\mathcal{F})\rightarrow \check{H}^k(f^{-1}(\mathcal{U}),f^*\mathcal{F}) \quad .
\end{equation*}
If $\mathcal{V}$ is a refinement of $\mathcal{U}$, by Definition \ref{709}, we get the following commutative diagram
\begin{equation*} %\label{830}
\begin{tikzcd}[column sep=small,row sep=small]
\check{H}^k(\mathcal{U},\mathcal{F}) \arrow[rr,"\check{H}^k(r\text{,}\mathcal{F})"{xshift=-10pt}] \arrow[dd,swap,"\check{H}^k(\mathcal{U}\text{,}\eta_\mathcal{F})"] \arrow[dr] & & \check{H}^k(\mathcal{V},\mathcal{F}) \arrow[dd,"\check{H}^k(\mathcal{V}\text{,}\eta_\mathcal{F})"] \arrow[dl] \\
& \check{H}^k(Y,\mathcal{F}) \\
\check{H}^k(\mathcal{U},f_*f^*\mathcal{F}) \arrow[rr,"\check{H}^k(r\text{,}f_*f^*\mathcal{F})"{xshift=30pt}] \arrow[dd,swap,"\cong"] \arrow[dr] & & \check{H}^k(\mathcal{V},f_*f^*\mathcal{F}) \arrow[dd,"\cong"] \arrow[dl] \\
& \check{H}^k(Y,f_*f^*\mathcal{F}) \arrow[uu,<-,swap,dashed,"\check{H}^k(Y\text{,}\eta_\mathcal{F})"{yshift=-10pt},crossing over]\\
\check{H}^k(f^{-1}(\mathcal{U}),f^*\mathcal{F}) \arrow[rr,"\check{H}^k(r\text{,}f^*\mathcal{F})"{xshift=25pt}] \arrow[dr] & & \check{H}^k(f^{-1}(\mathcal{V}),f^*\mathcal{F}) \arrow[dl] \\
& \check{H}^k(X,f^*\mathcal{F}) \arrow[uu,<-,swap,dashed,"g"{yshift=-10pt},crossing over] \arrow[uuuu,<-,bend left=62,dashed,"\check{H}^k(f)"{yshift=-10pt},crossing over]
\end{tikzcd}
\end{equation*}
where the triangles represent the respective universal cocones defining \v{C}ech cohomology. Therefore, for every $k\in\mathbb{N}_0$, by the universal property of the colimit, there is a unique induced morphism in \v{C}ech cohomology $\check{H}^k(f)\colon\check{H}^k(Y,\mathcal{F})\rightarrow \check{H}^k(X,f^*\mathcal{F})$ given by
\begin{equation*} %\label{499}
\check{H}^k(f)\coloneqq g\circ\check{H}^k(Y,\eta_\mathcal{F}) \, .
\end{equation*} 

\begin{remark} \label{851}
Given a continuous map $f\colon X\rightarrow Y$ and a sheaf $\mathcal{F}$ on $Y$. If sheaf and \v{C}ech cohomology agree, then the following diagram commutes \cite[Page 52]{grothendieckega}
\begin{equation*}
\begin{tikzcd}
H^k(Y,\mathcal{F}) \arrow[r,"H^k(f)"] \arrow[d,swap,"\cong"] &[10pt] H^k(X,f^*\mathcal{F}) \arrow[d,"\cong"] \\
\check{H}^k(Y,\mathcal{F}) \arrow[r,"\check{H}^k(f)"] & \check{H}^k(X,f^*\mathcal{F})
\end{tikzcd} \quad .
\end{equation*}
\end{remark}

\subsection{Cohomology of cellular sheaves on simplicial complexes} \label{233}

The cohomology of a sheaf on an abstract simplicial complex, viewed as a topological space with the Alexandrov topology, can be computed from the cochain complex (\ref{437}) \cite{curry}. In this section, we use \v{C}ech cohomology to obtain a derivation of this cochain complex alternative to the one given in \cite{curry}. Viewing the cohomology of cellular sheaves on simplicial complexes as the \v{C}ech cohomology with respect to a certain open cover also allows us to use the construction of Section \ref{809} to derive morphisms induced by simplicial maps. 

We start by showing that sheaf and \v{C}ech cohomology agree on abstract simplicial complexes. We use the following theorem.

\begin{theorem}[Cartan's theorem] \label{197} \cite[Theorem 13.19.]{cohomology} 
Let $X$ be a topological space, $\mathcal{F}$ a sheaf on $X$ and $\mathcal{U}=(U_i)_{i\in I}$ an open cover of $X$ such that $\mathcal{U}$ is a basis for the topology on $X$ closed under finite intersections and $\check{H}^k(U_{\sigma},\mathcal{F})=0$ for all $k>0$ and all $\sigma\in N_\mathcal{U}^k$, then $\check{H}^k(X,\mathcal{F})\cong H^k(X,\mathcal{F})$ for all $k\in\mathbb{N}_0$.
\end{theorem}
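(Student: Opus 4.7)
The plan is a two-step reduction: first establish the particular-cover comparison $\check{H}^k(\mathcal{U},\mathcal{F})\cong H^k(X,\mathcal{F})$ via a double complex spectral sequence, and then pass to $\check{H}^k(X,\mathcal{F})$ via a cofinality argument in the directed system of open covers.

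For the first step, I would choose an injective resolution $0\to\mathcal{F}\to\mathcal{I}^\bullet$ and form the double complex $K^{p,q}\coloneqq C^p(\mathcal{U},\mathcal{I}^q)$ with the \v{C}ech coboundary in the $p$-direction and the resolution differential in the $q$-direction. The two spectral sequences of the total complex converge to a common target. Taking horizontal cohomology first, each $\mathcal{I}^q$ is flasque, so $\check{H}^p(\mathcal{U},\mathcal{I}^q)=0$ for $p>0$, while the sheaf axiom gives $\check{H}^0(\mathcal{U},\mathcal{I}^q)=\Gamma(X,\mathcal{I}^q)$; the $E_2$ page is concentrated in the column $p=0$ and collapses to $H^q(X,\mathcal{F})$. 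Taking vertical cohomology first yields $E_1^{p,q}=\prod_{\sigma\in N^p_\mathcal{U}}H^q(U_\sigma,\mathcal{F})=C^p(\mathcal{U},\mathcal{H}^q(\mathcal{F}))$, where $\mathcal{H}^q(\mathcal{F})$ is the presheaf $U\mapsto H^q(U,\mathcal{F})$. To collapse this onto the row $q=0$ with $E_2^{p,0}=\check{H}^p(\mathcal{U},\mathcal{F})$, I must verify the derived-functor vanishing $H^q(U_\sigma,\mathcal{F})=0$ for all $q>0$ and all $\sigma\in N_\mathcal{U}^p$; comparing both spectral sequences then delivers the desired isomorphism.

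The main obstacle is upgrading the hypothesis $\check{H}^q(U_\sigma,\mathcal{F})=0$ to the stronger derived vanishing $H^q(U_\sigma,\mathcal{F})=0$, which I plan to do by strong induction on $q$. The case $q=1$ uses the classical identification $\check{H}^1\cong H^1$, valid for any sheaf on any topological space. For the inductive step, the restricted family $\mathcal{U}|_{U_\sigma}\coloneqq\{V\in\mathcal{U}:V\subseteq U_\sigma\}$ is again a basis of $U_\sigma$ closed under finite intersections, and all its finite intersections remain inside $\mathcal{U}$, so the \v{C}ech vanishing hypothesis is inherited. Running the same double complex on $U_\sigma$ with the cover $\mathcal{U}|_{U_\sigma}$, the inductive hypothesis $H^j=0$ for $0<j<q$ kills all rows strictly between $0$ and $q$ in the column-filtration spectral sequence, so comparing with the row-filtration spectral sequence in total degree $q$ identifies $H^q(U_\sigma,\mathcal{F})$ with a subquotient of $\check{H}^q(\mathcal{U}|_{U_\sigma},\mathcal{F})$. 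Since $\mathcal{U}|_{U_\sigma}$-subcovers are cofinal in the open covers of $U_\sigma$, the hypothesis forces $\check{H}^q(\mathcal{U}|_{U_\sigma},\mathcal{F})=0$, closing the induction.

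For the second step, any open cover $\mathcal{V}$ of $X$ admits a refinement $\mathcal{W}\subseteq\mathcal{U}$ because $\mathcal{U}$ is a basis, and finite intersections of elements of $\mathcal{W}$ still lie inside $\mathcal{U}$, so the derived vanishing $H^q(W_\sigma,\mathcal{F})=0$ established above applies to $\mathcal{W}$. Re-running the two spectral sequences of the double complex $C^p(\mathcal{W},\mathcal{I}^q)$—now without needing the bootstrap—yields $\check{H}^p(\mathcal{W},\mathcal{F})\cong H^p(X,\mathcal{F})$ for every such $\mathcal{W}$. Since these covers are cofinal in the directed set of all open covers and the refinement maps are compatible with the isomorphisms to $H^k(X,\mathcal{F})$, the colimit defining $\check{H}^k(X,\mathcal{F})$ stabilizes at $H^k(X,\mathcal{F})$, concluding the proof.
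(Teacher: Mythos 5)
The paper does not prove this statement (it is quoted from \cite[Theorem 13.19]{cohomology}), so I can only judge your argument on its own terms. Your overall architecture --- the double complex $C^p(\mathcal{U},\mathcal{I}^q)$, the collapse of one spectral sequence via flasqueness of injectives and the sheaf axiom, a bootstrap upgrading $\check{H}^q(U_\sigma,\mathcal{F})=0$ to $H^q(U_\sigma,\mathcal{F})=0$, and a final cofinality argument --- is exactly the standard route, and your first and third steps are fine. The problem is that the bootstrap, which you correctly identify as ``the main obstacle,'' is where your argument breaks, in two distinct places.

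First, the inference ``since $\mathcal{U}|_{U_\sigma}$-subcovers are cofinal in the open covers of $U_\sigma$, the hypothesis forces $\check{H}^q(\mathcal{U}|_{U_\sigma},\mathcal{F})=0$'' is false: the hypothesis $\check{H}^q(U_\sigma,\mathcal{F})=\underset{\mathcal{W}}{\text{colim}}\,\check{H}^q(\mathcal{W},\mathcal{F})=0$ only says that every \v{C}ech class on a given cover dies after some refinement, not that any individual term $\check{H}^q(\mathcal{V},\mathcal{F})$ vanishes. This particular defect is repairable by carrying $\text{colim}_{\mathcal{V}}$ through the whole comparison (filtered colimits of modules are exact), but as written the step is a non sequitur. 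Second, and more seriously, after the inductive hypothesis kills the rows $0<j<q$, the $E_2$-page in total degree $q$ has \emph{two} surviving corners, not one: $E_2^{q,0}=\check{H}^q(\mathcal{V},\mathcal{F})$ and $E_2^{0,q}=\ker\big(\prod_i H^q(V_i,\mathcal{F})\rightrightarrows\prod_{i,j}H^q(V_{ij},\mathcal{F})\big)$. So $H^q(U_\sigma,\mathcal{F})$ is an extension of a subgroup of $E_2^{0,q}$ by $E_2^{q,0}$, not a subquotient of $\check{H}^q(\mathcal{V},\mathcal{F})$. The corner $E_2^{0,q}$ is built from the very groups $H^q(V_i,\mathcal{F})$ you are trying to prove vanish, so the induction does not close without an independent input. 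What is missing is the standard locality lemma: for $q>0$, every class in $H^q(V,\mathcal{F})$ restricts to zero on the members of some open cover of $V$ (proved by embedding $\mathcal{F}$ in an injective and dimension shifting), which gives $\text{colim}_{\mathcal{V}}\,E_2^{0,q}=0$ and, together with the colimit form of the first point, completes the step. Your base case $q=1$ quietly relies on the same lemma (it is how $\check{H}^1\cong H^1$ is proved), so it must appear somewhere; until it does, the proof has a genuine gap.
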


\begin{proposition} \label{538} 
For a sheaf on an abstract simplicial complex, viewed as a topological space with the Alexandrov topology, sheaf cohomology agrees with \v{C}ech cohomology.
\end{proposition}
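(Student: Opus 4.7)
The plan is to apply Cartan's theorem (Theorem~\ref{197}) to the basis $\mathcal{U} = (U_\sigma)_{\sigma \in X}$ of Alexandrov basic open sets. That $\mathcal{U}$ forms a basis is immediate from Definition~\ref{913}. For closure under finite intersections, I observe that for simplices $\sigma_0,\ldots,\sigma_k \in X$ the intersection $U_{\sigma_0} \cap \cdots \cap U_{\sigma_k}$ consists of all $\tau \in X$ with $\sigma_i \leq \tau$ for every $i$. If this set is nonempty, any such $\tau$ contains the union of vertex sets $\sigma_0 \cup \cdots \cup \sigma_k$, so this union is itself a simplex in $X$, and the intersection equals $U_{\sigma_0 \cup \cdots \cup \sigma_k}$, which is again in $\mathcal{U}$ (adjoining the empty set to $\mathcal{U}$ if one wishes strict closure).

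The substantive content is verifying the acyclicity hypothesis: $\check{H}^k(U_\sigma,\mathcal{F}) = 0$ for every $k > 0$ and every basic open $U_\sigma$. The key observation will be that $\sigma$ is the unique minimum of $U_\sigma$, and since every open set in the Alexandrov topology is upward closed, any open set containing $\sigma$ must contain all of $U_\sigma$. Consequently, for any open cover $\mathcal{V} = (V_j)_{j\in J}$ of $U_\sigma$, some $V_{j_0}$ contains $\sigma$ and hence contains $U_\sigma$, so the singleton cover $\{U_\sigma\}$ refines $\mathcal{V}$. This makes $\{U_\sigma\}$ a maximum element in the directed system of open covers of $U_\sigma$ under the refinement ordering, so the colimit in Definition~\ref{824} is attained at $\{U_\sigma\}$, giving $\check{H}^k(U_\sigma,\mathcal{F}) \cong \check{H}^k(\{U_\sigma\},\mathcal{F})$. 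For $k \geq 1$ the nerve $N^k_{\{U_\sigma\}}$ is empty (it would require $k+1$ distinct indices from a singleton), whence $C^k(\{U_\sigma\},\mathcal{F}) = 0$ and the desired vanishing follows.

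With all hypotheses of Cartan's theorem verified, the conclusion $\check{H}^k(X,\mathcal{F}) \cong H^k(X,\mathcal{F})$ follows for every $k \in \mathbb{N}_0$. The delicate step is the cofinality argument in the second paragraph: one has to be careful about the direction of the maps in Definition~\ref{709}, which go from coarser to finer covers, so that a maximum in the refinement order genuinely computes the colimit rather than a limit. All other steps are combinatorial bookkeeping about the Alexandrov topology on the face poset.
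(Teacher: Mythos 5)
Your proof is correct and takes essentially the same approach as the paper: both verify the hypotheses of Cartan's theorem for the basis of Alexandrov basic open sets, establishing acyclicity of each $U_\sigma$ by observing that the singleton cover $\{U_\sigma\}$ refines every open cover of $U_\sigma$ and hence computes the colimit defining $\check{H}^k(U_\sigma,\mathcal{F})$ (the paper phrases this as every cover being equivalent to the trivial one). Your explicit remark about the direction of the refinement maps is a valid and worthwhile point of care, but introduces nothing beyond the paper's argument.
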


\begin{proof}
Let $X$ be a simplicial complex and $\mathcal{F}$ a sheaf on $X$. We show that the conditions of Theorem \ref{197} are satisfied. Let $\mathcal{U}=\{U_x\text{ }|\text{ } x\in X\}$ be the standard basis of the Alexandrov topology on $X$. Then $\mathcal{U}$ obviously is an open cover of $X$. Let $x,y\in X$ be simplices defined by their sets of vertices and let $z=x\cup y$. Then $U_x\cap U_y=\{v\in X\text{ }|\text{ }x\leq v \text{ and }y\leq v\}=\{v\in X\text{ }|\text{ }x\subseteq v\text{ and }y\subseteq v\}=\{v\in X\text{ }|\text{ }x\cup y\subseteq v\}=\{v\in X\text{ }|\text{ }z\leq v\}=U_z$ if $z\in X$ and $U_x\cap U_y=\emptyset$ otherwise. The iterative application of this argument proves that $\mathcal{U}$ is closed under finite intersections. 

Let $x_0,\ldots,x_k\in X$ and $U_{x_0}\cap\ldots\cap U_{x_k}=U_z$ for a $z\in X$. Let $\mathcal{W}=\{U_z\}$ be the trivial open cover of $U_z$. Every other open cover $\mathcal{V}=(V_j)_{j\in J}$  of $U_z$, viewed as a subspace of $X$ with the subspace topology, obviously refines $\mathcal{W}$. On the other hand, there has to be a $j\in J$ such that $z\in V_j$. Since $U_z$ is open in $X$ also $V_j$ is open in $X$, hence, if $y\in X$ such that $z\leq y$, then $y\in V_j$. Every $y\in U_z$ satisfies $z\leq y$ implying $y\in V_j$ and therefore $U_z\subseteq V_j$. This implies that $\mathcal{W}$ refines $\mathcal{V}$, hence $\mathcal{V}$ and $\mathcal{W}$ are equivalent (i.e.\ $\mathcal{V}\leq \mathcal{W}$ and $\mathcal{W}\leq \mathcal{V}$). Therefore, every open cover $\mathcal{V}$ of $U_z$ is equivalent to the trivial open cover $\mathcal{W}$. Since equivalent open covers induce isomorphisms $\check{H}^k(\mathcal{V},\mathcal{F})\xrightarrow{\cong} \check{H}^k(\mathcal{W},\mathcal{F})$ \cite[Chapter 9.2]{cohomology}, we obtain $\check{H}^k(U_{z},\mathcal{F}|_{U_{z}})\cong\check{H}^k(\mathcal{W},\mathcal{F}|_{U_{z}})$ for all $k\in\mathbb{N}_0$. For the \v{C}ech cochains we get $C^k(\mathcal{W},\mathcal{F}|_{U_{z}})=\prod_{\sigma\in N_\mathcal{W}^k}\mathcal{F}(W_{\sigma})=0$ for every $k>0$ since $N_\mathcal{W}^k=\emptyset$ and the empty product is the terminal object. This implies $\check{H}^k(U_{z},\mathcal{F}|_{U_{z}})=0$ for every $k>0$ and all $x_0,\ldots,x_k\in X$. Hence, by Cartan's theorem, we have $\check{H}^k(X,\mathcal{F})\cong H^k(X,\mathcal{F})$ for every $k\in\mathbb{N}_0$. 
\end{proof}
\noindent
Note that this argument works for sheaves on any poset $X$ such that $\mathcal{U}=\{U_x\text{ }|\text{ } x\in X\}$ is closed under finite intersections. Proposition \ref{538} implies that we can use \v{C}ech cohomology to compute the cohomology of a cellular sheaf on a simplicial complex. 

\begin{theorem} \label{742}
Let $X$ be a finite simplicial complex and $\mathcal{F}$ a sheaf on $X$. If $F:\mathbf{X}\rightarrow \mathbf{M}$ is the corresponding functor on $X$, for all $k\in\mathbb{N}_0$, the sheaf cohomology of $\mathcal{F}$ is given by the cohomology of the cochain complex $(C^\bullet(X,F),\delta^\bullet)$ defined by
\begin{equation} \label{437}
\begin{aligned}
& C^k(X,F)\coloneqq \bigoplus_{\sigma\in X^k} F(\sigma) \\
& \delta^k\coloneqq\bigoplus_{\tau\in X^{k+1}}\big(\sum_{\tau \geq \sigma \in X^k} [\sigma:\tau] F(\sigma\rightarrow \tau)\circ p_\sigma\big)
\end{aligned} 
\end{equation}
\end{theorem}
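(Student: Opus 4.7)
The plan is to identify the complex (\ref{437}) as the \v{C}ech complex of $\mathcal{F}$ with respect to the canonical finest open cover of $X$, then invoke Proposition \ref{538} to pass from \v{C}ech cohomology back to sheaf cohomology. Concretely, I will work with the open cover $\mathcal{U} = \{U_x \mid x \in X^0\}$ indexed by the vertices of $X$, where $U_x$ is the Alexandrov basic open set of cofaces of $x$. This is a cover because every simplex has at least one vertex.

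The first step is to show that $\check{H}^k(X,\mathcal{F}) \cong \check{H}^k(\mathcal{U},\mathcal{F})$. For this I show that $\mathcal{U}$ refines every open cover of $X$: given any open cover $(V_i)_{i \in I}$ and a vertex $x$, pick $i$ with $x \in V_i$; since $U_x$ is the smallest open neighbourhood of $x$ in the Alexandrov topology, $U_x \subseteq V_i$. Hence $\mathcal{U}$ is terminal in $\mathbf{Cover}(X)$, so the colimit defining $\check{H}^k(X,\mathcal{F})$ in Definition \ref{824} is attained at $\mathcal{U}$.

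The second step is a direct combinatorial identification of $C^\bullet(\mathcal{U},\mathcal{F})$ with $C^\bullet(X,F)$. An ordered tuple $(x_0 < \cdots < x_k)$ of vertices satisfies $U_{x_0} \cap \cdots \cap U_{x_k} \neq \emptyset$ iff $\sigma = \{x_0, \ldots, x_k\}$ is a simplex of $X$, in which case the intersection equals $U_\sigma$. Thus $N^k_\mathcal{U}$ is in bijection with $X^k$, and using the finiteness of $X$ to identify products with direct sums,
\begin{equation*}
C^k(\mathcal{U},\mathcal{F}) = \prod_{\sigma \in X^k}\mathcal{F}(U_\sigma) \cong \bigoplus_{\sigma \in X^k} F(\sigma) = C^k(X,F).
\end{equation*}
Under the correspondence of Example \ref{558}, the inclusion $U_\tau \hookrightarrow U_{\partial_j \tau}$ (valid because $\partial_j\tau \leq \tau$ forces $U_\tau \subseteq U_{\partial_j\tau}$) translates to the internal map $F(\partial_j\tau \to \tau)$, and the Čech alternating sign $(-1)^j$ coincides with the standard simplicial orientation coefficient $[\partial_j\tau:\tau]$. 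Substituting into Definition \ref{271} turns the \v{C}ech coboundary into
\begin{equation*}
\delta^k = \bigoplus_{\tau \in X^{k+1}}\Bigl(\sum_{\sigma \leq \tau,\,\sigma \in X^k}[\sigma:\tau]\,F(\sigma \to \tau)\circ p_\sigma\Bigr),
\end{equation*}
which is exactly the differential of (\ref{437}). Combining this with Proposition \ref{538} yields $H^k(X,\mathcal{F}) \cong \check{H}^k(X,\mathcal{F}) \cong \check{H}^k(\mathcal{U},\mathcal{F}) \cong H^k\bigl(C^\bullet(X,F)\bigr)$.

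The main obstacle is not conceptual but notational: carefully matching the alternating signs of the \v{C}ech differential with a consistent choice of simplicial orientation coefficients $[\sigma:\tau]$, and verifying that the sheaf-to-functor passage of Example \ref{558} identifies $\mathcal{F}(U_\tau \hookrightarrow U_{\partial_j\tau})$ with $F(\partial_j\tau \to \tau)$. Everything else is a straightforward combination of the terminal-cover observation with the combinatorics of the nerve of $\mathcal{U}$.
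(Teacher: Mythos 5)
Your proposal is correct and follows essentially the same route as the paper's proof: take the cover by vertex stars $\mathcal{U}=(U_x)_{x\in\mathrm{Vert}(X)}$, observe that it is terminal in $\mathbf{Cover}(X)$ so that $\check{H}^k(X,\mathcal{F})\cong\check{H}^k(\mathcal{U},\mathcal{F})$, identify the nerve of $\mathcal{U}$ with $X$ and the \v{C}ech complex with $(C^\bullet(X,F),\delta^\bullet)$ including the sign bookkeeping, and conclude via Proposition \ref{538}. The only cosmetic difference is that you justify replacing products by direct sums via finiteness of $X$ while the paper cites the coincidence of finite products and coproducts in an abelian category; both are fine.
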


\begin{proof}
Let $\text{Vert}(X)$ denote the set of vertices of $X$ with an arbitrary linear order. Define $\mathcal{U}\coloneqq(U_x)_{x\in\text{Vert}(X)}$ where $U_x$ denotes the basic open set corresponding to the vertex $x\in\text{Vert}(X)$. Then $\mathcal{U}$ is an open cover of $X$ since every simplex $\sigma\in X$ contains some vertex $x$ and is therefore contained in $U_x$. A simplex $\sigma=(x_0,\ldots,x_k)\in X$ is defined by its set of vertices $x_0<\ldots<x_k\in \text{Vert}(X)$ and it is easy to see that 
\begin{equation*} %\label{415}
U_{x_0}\cap \ldots\cap U_{x_k}=\begin{cases} \emptyset  &\text{if} \quad \sigma\notin X \\ U_\sigma &\text{if} \quad \sigma\in X  \end{cases} 
\end{equation*}
where $U_\sigma$ on the one hand denotes the Alexandrov basic open set corresponding to $\sigma\in X$ and on the other hand the open set corresponding to $\sigma\in N_\mathcal{U}^k$. Hence, $N_\mathcal{U}=X$, i.e.\ the nerve complex of the open cover $\mathcal{U}$ is the original simplicial complex $X$. We define an orientation coefficient for incidence relations of simplices $\sigma,\tau\in X$:
\begin{equation*} %\label{846}
[\sigma:\tau]\coloneqq\begin{cases} (-1)^j  &\text{if }\sigma=(x_0,\ldots,\hat{x}_j,\ldots,x_{k+1})<(x_0,\ldots,x_{k+1})=\tau \\ 0 &\text{ else} \end{cases} 
\end{equation*}
where $\tau=(x_0,\ldots,x_{k+1})$ is the representation of the simplex $\tau$ as an ordered list of its vertices. We now compute the \v{C}ech cohomology of $\mathcal{F}$ with respect to the open cover $\mathcal{U}$. By Definition \ref{271}, we obtain 
\begin{equation} \label{784}
C^k(\mathcal{U},\mathcal{F})=\prod_{\sigma\in N_\mathcal{U}^k} \mathcal{F}(U_\sigma)=\prod_{\sigma\in X^k} \mathcal{F}(U_{\sigma})=\prod_{\sigma\in X^k} F(\sigma) 
\end{equation}
and
\begin{equation} \label{852}
\begin{aligned} 
\delta^k&=\prod_{\tau\in N_\mathcal{U}^{k+1}}\Big(\sum_{j=0}^{k+1}(-1)^j\text{ }\mathcal{F}(U_{\tau}\xhookrightarrow{}U_{\partial_j\tau})\circ p_{\partial_j\tau}\Big) \\
&=\prod_{\tau\in X^{k+1}}\Big(\sum_{\tau\geq\sigma\in X^k}[\sigma:\tau]\text{ }\mathcal{F}(U_\tau \xhookrightarrow{}U_\sigma)\circ p_\sigma\Big) \\ &=\prod_{\tau\in X^{k+1}}\Big(\sum_{\tau\geq\sigma\in X^k}[\sigma:\tau]\text{ }F(\sigma\rightarrow \tau)\circ p_\sigma\Big) \quad .
\end{aligned}
\end{equation}
Now suppose $\mathcal{V}=(V_j)_{j\in J}$ is some other open cover of $X$. For every $x\in \text{Vert}(X)$ choose a set $V_{r(x)}$ such that $x\in V_{r(x)}$. This defines a map $r\colon\text{Vert}(X)\rightarrow J$. By the definition of an open set in the Alexandrov topology, $x\in V_{r(x)}$ implies $U_x\subseteq V_{r(x)}$. Therefore, $\mathcal{U}=(U_x)_{x\in \text{Vert}(X)}$ is a refinement of the cover $\mathcal{V}$ by Definition \ref{709}. Since $\mathcal{V}$ was arbitrary, the open cover $\mathcal{U}$ refines every open cover of $X$, i.e.\ it is terminal in the category $\mathbf{Cover}(X)$. This implies that $\check{H}^k(\mathcal{U},\mathcal{F})$ is a colimit of the functor $\check{H}^k(-,\mathcal{F})$ and hence $\check{H}^k(X,\mathcal{F})\cong\check{H}^k(\mathcal{U},\mathcal{F})$. Moreover, by Proposition \ref{538}, $H^k(X,\mathcal{F})\cong \check{H}^k(X,\mathcal{F})$. Since in an abelian category finite products and coproducts are isomorphic, (\ref{784}) and (\ref{852}) imply the result. 
\end{proof}

\noindent
Note that if $\phi\colon\mathcal{F}\rightarrow \mathcal{G}$ is a sheaf morphism on a finite simplicial complex, then, by using Definition \ref{115} and the arguments in the proof of Theorem \ref{742}, we obtain, for every $k\in\mathbb{N}_0$, the following formula for the induced morphism $\phi^k\colon C^k(X,F)\rightarrow C^k(X,G)$
\begin{equation*} %\label{796}
\phi^k\coloneqq \bigoplus_{\sigma\in X^k} \phi_\sigma\circ p_\sigma \quad .  
\end{equation*}

\subsection{Morphisms induced by simplicial maps} \label{342}

Given a simplicial map $f\colon X\rightarrow Y$ and a sheaf $\mathcal{F}$ on $Y$. In this section, we use the construction of Section \ref{809} to derive an explicit formula of a cochain morphism $f^\bullet$ induced by $f$ on the cochain complexes (\ref{437}) derived in Section \ref{233} such that the maps induced in cohomology are the maps $H^k(f)\colon H^k(Y,\mathcal{F})\rightarrow H^k(X,f^*\mathcal{F})$. 

\begin{theorem} \label{546}
Let $X$ and $Y$ be abstract simplicial complexes, $f:X\rightarrow Y$ a simplicial map and $\mathcal{F}$ a sheaf on $Y$. Let $F\colon\mathbf{X}\rightarrow\mathbf{M}$ be the corresponding functor on $X$. Then the induced morphism $H^k(f)\colon H^k(Y,F)\rightarrow H^k(X,f^*F)$ is given by $H^k(f^\bullet)$, where $f^\bullet\colon C^\bullet(Y,F)\rightarrow C^\bullet(X,f^*F)$ is a cochain morphism on (\ref{437}) such that $f^k\colon C^k(Y,F) \rightarrow C^k(X,f^*F)$ is defined by
\begin{equation*} %\label{524}
f^k\coloneqq \underset{\sigma\in X}{\bigoplus}\text{ } p_{f(\sigma)} \quad .
\end{equation*}
\end{theorem}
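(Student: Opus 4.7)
The plan is to apply the general construction of Section \ref{809} to the specific terminal open covers of $X$ and $Y$ used in the proof of Theorem \ref{742}, and then to unwind the resulting composition of cochain morphisms component-wise. By Proposition \ref{538} combined with Remark \ref{851}, sheaf cohomology and \v{C}ech cohomology agree naturally on simplicial complexes with the Alexandrov topology, so it suffices to show that $\check{H}^k(f)$ is represented at the cochain level by the claimed formula.

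First I would fix the covers $\mathcal{V} \coloneqq (U_y)_{y\in\mathrm{Vert}(Y)}$ of $Y$ and $\mathcal{U} \coloneqq (U_x)_{x\in\mathrm{Vert}(X)}$ of $X$. By the argument in the proof of Theorem \ref{742}, both are terminal in their respective cover categories, so the \v{C}ech colimits collapse to these covers and the cochain complexes $C^\bullet(\mathcal{V},\mathcal{F})$ and $C^\bullet(\mathcal{U},f^*\mathcal{F})$ coincide with the cellular complexes of (\ref{437}). Since $f$ is simplicial, the induced vertex map $r\colon\mathrm{Vert}(X)\to\mathrm{Vert}(Y)$, $r(x) \coloneqq f(x)$, satisfies $U_x \subseteq f^{-1}(U_{f(x)})$ in the Alexandrov topology, which exhibits $\mathcal{U}$ as a refinement of $f^{-1}(\mathcal{V})$.

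The construction of Section \ref{809}, applied to these covers, then realizes $\check{H}^k(f)$ at the chain level as the composition
\begin{equation*}
C^\bullet(\mathcal{V},\mathcal{F}) \xrightarrow{\eta_\mathcal{F}^\bullet} C^\bullet(\mathcal{V},f_*f^*\mathcal{F}) \xrightarrow{\cong} C^\bullet(f^{-1}(\mathcal{V}),f^*\mathcal{F}) \xrightarrow{r^\bullet} C^\bullet(\mathcal{U},f^*\mathcal{F}).
\end{equation*}
The core of the proof is to compute this composition component-wise and identify it with $\bigoplus_{\sigma\in X^k} p_{f(\sigma)}$. Fix $\sigma=(x_0<\cdots<x_k)\in X^k$; its image under the refinement is the ordered tuple $r(\sigma)=(f(x_0),\ldots,f(x_k))$. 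The refinement morphism $r^k$ takes the $r(\sigma)$-factor of $C^k(f^{-1}(\mathcal{V}),f^*\mathcal{F})$ to the $\sigma$-factor of $C^k(\mathcal{U},f^*\mathcal{F})$ via the restriction along $U_\sigma\hookrightarrow f^{-1}(U_{f(\sigma)})$, which under the cellular identification $f^*F(\sigma)=F(f(\sigma))$ from Example \ref{575} is the identity on $F(f(\sigma))$. The middle isomorphism is tautological from $f_*f^*\mathcal{F}(U_\tau)=f^*\mathcal{F}(f^{-1}(U_\tau))$, and on $U_{f(\sigma)}$ the unit $\eta_\mathcal{F}$ collapses to the identity on $F(f(\sigma))=\mathcal{F}(U_{f(\sigma)})$ because the colimit defining $f^*\mathcal{F}$ (Definition \ref{894}) is cofinally indexed by the basic open $U_{f(\sigma)}$ itself. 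Assembling these identifications, the $\sigma$-component of the composition is exactly the projection onto the $f(\sigma)$-factor followed by the identity $F(f(\sigma))\to F(f(\sigma))$, which is the claimed map $p_{f(\sigma)}$.

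The main obstacle I expect is careful bookkeeping of the unit $\eta_\mathcal{F}$ on basic opens: one must verify that the abstract colimit in Definition \ref{894} really does compute to the identity on $F(f(\sigma))$ after sheafification and composition with the tautological identification. A secondary subtlety is the degenerate case where $f$ is not injective on the vertices of $\sigma$: then $r(\sigma)$ has repeated entries, so the corresponding nerve simplex is absent and the composition vanishes on that $\sigma$-factor, which matches the convention that $p_{f(\sigma)}=0$ when $f(\sigma)\notin Y^k$. Finally, one should check that no extra signs are introduced, by verifying that both $r^\bullet$ and $\eta_\mathcal{F}^\bullet$ respect the orientation convention used in the coboundary (\ref{437}); this is immediate since both are defined component-wise without reindexing of simplices.
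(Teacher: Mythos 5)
Your proposal is correct and follows essentially the same route as the paper's proof: fix the terminal vertex-star covers, realize $\check{H}^k(f)$ as the composition of the unit morphism $\eta_{\mathcal{F}}^\bullet$, the tautological identification $C^\bullet(\mathcal{V},f_*f^*\mathcal{F})\cong C^\bullet(f^{-1}(\mathcal{V}),f^*\mathcal{F})$, and the refinement morphism $r^\bullet$ for $r=f|_{\mathrm{Vert}(X)}$, then check that the unit and the restriction maps collapse to identities via the minimal-open/cofinality argument, and conclude by Theorem \ref{742} and Remark \ref{851}. Your explicit treatment of the degenerate case where $f$ is not injective on the vertices of $\sigma$ (so that $p_{f(\sigma)}=0$) is a point the paper leaves implicit, but otherwise the two arguments coincide.
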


\begin{proof}
We use the constructions of Section \ref{809} and \ref{233}. To construct $\check{H}^k(f)$ we first have to construct $\eta_\mathcal{F}\colon \mathcal{F}\rightarrow f_*f^*\mathcal{F}$. Let $\tau\in Y$, then $\eta_\mathcal{F}$ is defined on the basic open subset $U_\tau\subseteq Y$ as the canonical morphism to the colimit \cite[Section 2.4]{iversen}
\begin{equation} \label{198} 
\mathcal{F}(U_\tau)\rightarrow \underset{f(f^{-1}(U_\tau))\subseteq V}{\text{colim}} \mathcal{F}(V)\cong f_*f^*\mathcal{F}(U_\tau) \quad .
\end{equation}  
By the defining properties of simplicial complexes and simplicial maps, if $f^{-1}(U_\tau)=\{\sigma\in X\text{ }|\text{ }f(\sigma)\geq \tau\}\neq \emptyset$, then there exists a $\sigma\in X$ such that $f(\sigma)=\tau$ and $\tau\in f(f^{-1}(U_\tau))$. Hence, if $f(f^{-1}(U_\tau))\neq\emptyset$, then for every open subset $f(f^{-1}(U_\tau)) \subseteq V\subseteq Y$ we have $U_\tau\subseteq V$. Therefore, either $\emptyset$ or $U_\tau$ is a minimal open subset containing $f(f^{-1}(U_\tau))$, thus the colimit in (\ref{198}) is isomorphic to $\mathcal{F}(\emptyset)$ or $\mathcal{F}(U_\tau)$, respectively. This implies we obtain
\begin{equation*} %\label{196}
(\eta_\mathcal{F})_{U_\tau}=\begin{cases} \mathcal{F}(U_\tau)\xrightarrow{\text{id}}\mathcal{F}(U_\tau)\cong f_*f^*\mathcal{F}(U_\tau) \quad\text{if} \quad \tau\in f(X) \\ \mathcal{F}(U_\tau)\xrightarrow{\text{ }\text{ }\text{ }\text{ }\text{ }0\text{ }\text{ }\text{ }\text{ }}0\cong f_*f^*\mathcal{F}(U_\tau) \quad\text{if} \quad \tau\notin f(X) \end{cases}
\end{equation*}
for the sheaf morphism induced by $\eta$. As we saw in the proof of Theorem \ref{742}, for the open cover $\mathcal{U}=(U_y)_{y\in\text{Vert}(Y)}$, we obtain $\check{H}^k(Y,\mathcal{F})\cong\check{H}^k(\mathcal{U},\mathcal{F})$. Therefore, we get the following morphism $\eta_\mathcal{F}^\bullet\colon C^k(\mathcal{U},\mathcal{F})\rightarrow C^k(\mathcal{U},f_*f^*\mathcal{F})$ induced on \v{C}ech cochains
\begin{equation*} %\label{286}
(\eta_\mathcal{F})^k=\prod_{\tau\in f(X)^k} \text{id}\circ p_\tau\colon\prod_{\tau\in Y^k} \mathcal{F}(U_\tau)\rightarrow \prod_{\tau\in f(X)^k} \mathcal{F}(U_\tau) \quad . 
\end{equation*}
The induced morphism $\check{H}^k(Y,\eta_\mathcal{F})$ is the morphism induced in cohomology by $\eta_\mathcal{F}^\bullet$. 

The second step in the construction of $\check{H}^k(f)$ is the construction of the morphism $\check{H}^k(f^{-1}(\mathcal{U}),f^*\mathcal{F})\rightarrow \check{H}^k(X,f^*\mathcal{F})$. The open cover $\overline{\mathcal{U}}=(\overline{U}_x)_{x\in \text{Vert}(X)}$ refines every other open cover of $X$. Let $\sigma\in \text{Vert}(X)$ and $\overline{U}_\sigma$ be the corresponding basic open subset, then $\overline{U}_\sigma\subseteq f^{-1}(U_{f(\sigma)})$. The map $r$, defined by $r(\sigma)\coloneqq f(\sigma)$, corresponding to the refinement of $f^{-1}(\mathcal{U})$ by $\overline{\mathcal{U}}$, induces a cochain morphism $r^\bullet\colon C^\bullet(f^{-1}(\mathcal{U}),f^*\mathcal{F})\rightarrow C^\bullet(\overline{\mathcal{U}},f^*\mathcal{F})$. By Definition \ref{709}, we get
\begin{equation*} %\label{976}
r^k=\prod_{\sigma\in N_{\overline{\mathcal{U}}}^k} \Big(f^*\mathcal{F}\big(\overline{U}_\sigma\xhookrightarrow{} f^{-1}(U_{f(\sigma)})\big)\circ p_{f(\sigma)}\Big) \quad .
\end{equation*}
Note that $f^*\mathcal{F}\big(f^{-1}(U_{f(\sigma)})\big)=f_*f^*\mathcal{F}\big(U_{f(\sigma)}\big)\cong\mathcal{F}\big(U_{f(\sigma)}\big)$ and $f^*\mathcal{F}(\overline{U}_\sigma)\cong\mathcal{F}(U_{f(\sigma)})$. Hence, for the induced map on the colimits defining $f^*\mathcal{F}$, we obtain $\text{id}=f^*\mathcal{F}\big(\overline{U}_{\sigma}\xhookrightarrow{} f^{-1}(U_{f(\sigma)})\big)\colon\mathcal{F}\big(U_{f(\sigma)}\big)\rightarrow \mathcal{F}\big(U_{f(\sigma)}\big)$. We now define $f^\bullet\coloneqq r^\bullet\circ \eta_\mathcal{F}^\bullet\colon C^\bullet(\mathcal{U},\mathcal{F})\rightarrow C^\bullet(\overline{\mathcal{U}},f^*\mathcal{F})$ and get
\begin{equation*} %\label{298}
f^k\coloneqq r^k\circ \eta_\mathcal{F}^k=\prod_{\sigma\in N_{\overline{\mathcal{U}}}^k} p_{f(\sigma)}\circ \prod_{\tau\in f(X)^k} p_{\tau}=\prod_{\sigma\in N_{\overline{\mathcal{U}}}^k} p_{f(\sigma)} \quad .
\end{equation*}
The cohomology $H^k(f^\bullet)\colon\check{H}^k(\mathcal{U},\mathcal{F})\rightarrow\check{H}^k(\overline{\mathcal{U}},f^*\mathcal{F})$ of the cochain morphism $f^\bullet$ is the morphism $\check{H}^k(f)\colon\check{H}^k(Y,\mathcal{F})\rightarrow\check{H}^k(X,f^*\mathcal{F})$ induced by $f$. Hence, by Theorem \ref{742} and Remark \ref{851}, we obtain the desired result.
\end{proof}

\end{document}